\numberwithin{equation}{section}
\newcommand{\dx}{\ensuremath{\hspace{0.1cm}\mathrm{d}x}}
\newcommand{\dt}{\ensuremath{\hspace{0.1cm}\mathrm{d}t}}
\newcommand{\ds}{\ensuremath{\hspace{0.1cm}\mathrm{d}s}}
\newcommand{\dW}{\ensuremath{\hspace{0.1cm}\mathrm{d}W}}
\newcommand{\ddW}{\ensuremath{\hspace{0.1cm}\mathrm{d}\tilde W}}
\newtheorem{lemma}{Lemma}
\newtheorem{definition}{Definition}
\newtheorem{proposition}{Proposition}
\newtheorem{corollary}{Corollary}
\newtheorem{theorem}{Theorem}
\newtheorem{remark}{Remark}
\providecommand{\msc}[1]{{\small \textit{Mathematics Subject Classification---} #1}}
\providecommand{\keywords}[1]{{\small \textit{Keywords---} #1}}
\title{A semi-discrete scheme for the stochastic Landau-Lifshitz equation}
\author{Fran\c cois Alouges\thanks{\textit{francois.alouges@polytechnique.edu}}, Anne De Bouard\thanks{\textit{debouard@cmap.polytechnique.fr}} and Antoine Hocquet\thanks{\textit{antoine.hocquet@cmap.polytechnique.fr}}\\{\small CMAP Ecole Polytechnique CNRS, Route de Saclay,}
\\{\small 91128 Palaiseau Cedex, FRANCE}}
\begin{document}
\maketitle

\begin{abstract}
We propose a new convergent time semi-discrete scheme for the \linebreak stochastic Landau-Lifshitz-Gilbert equation. The scheme is only linearly implicit and does not require the resolution
of a nonlinear problem at each time step. Using a martingale approach, we prove the convergence in law of the scheme up to a subsequence.
\end{abstract}
\keywords{Landau-Lifshitz equation, Numerical analysis, Stochastic partial differential equation\\}
\msc{Primary 60H15 (35R60),35K55, 65M12; Secondary 82D45}

\section{Introduction}
\label{intro}
Ferromagnetic materials possess a spontaneous magnetization $m$ which evolution is classically modelized, when thermal fluctuations are negligible, according to the so-called Landau-Lifshitz equation
 (see e.g.\ \cite{BROWN1963,LANDAULIFSHITZ1935})
\begin{align}
 \begin{cases}
\label{LLG_formelle}
 \partial_t m =-\alpha m\times(m\times \mathrm{H_{eff}})+ m\times\mathrm{H_{eff}},\quad  \text{in }(0,T)\times D\,,\\
 \partial_n{m}(t,x)=0\quad \text{on }(0,T)\times\partial D\,,\\
 m(0,x):=m_0(x),\quad x\in D.\\
\end{cases}
\end{align}
In \eqref{LLG_formelle}, $D\subseteq\mathbb{R}^3$ is the domain occupied by the sample, $\alpha>0$ is a damping parameter.
The effective field $\displaystyle\mathrm{H_{eff}}:=-\frac{\partial \mathcal{E}(m)}{\partial m}$, where $\mathcal{E}(m)$ is the Brown energy of $m$ (see \cite{BROWN1978} for more details), which governs the dynamics, contains several terms according to different physical phenomena : exchange, anisotropy, stray field, external field, magnetostriction, etc. 
Notice that \eqref{LLG_formelle} preserves the local magnitude of the magnetization, namely, assuming $m_0(x)\in\mathbb{S}^2:=\big\{x\in\mathbb{R}^3,\;|x|=1\big\}$, we formally have
\begin{equation}\label{sphere}
 m(t,x)\in\mathbb{S}^2\,,\quad \forall(t,x)\in[0,T]\times D\,.
\end{equation} 
There is an abundant literature on the mathematical properties of \eqref{LLG_formelle}. We refer the reader to \cite{ALOUGESSOYEUR1991,carbou1997comportement,carbou2001regular,carbou2001regularsolutions,daquino2006midpoint,serpico2001numerical,serpico2003analytical,VISINTIN1985}, and references therein for the state-of-the-art of the analysis of \eqref{LLG_formelle}.

In \cite{BROWN1963}, thermal fluctuations are taken into account by adding to \eqref{LLG_formelle} a (stochastic) noise term $\xi(t,x)$. Following the presentation given in \cite{banas2013stochastic,BANASBRZEZNIAKPROHLNEKLYUDOV2013,banas2013computational,BRZEZNIAKGOLDYS2013},
and focusing on the case where only the exchange term is considered (i.e.\ $\mathrm{H_{eff}}=\Delta m$) we modify \eqref{LLG_formelle} to
\begin{align}
 \begin{cases}
\label{LLG_formelle_2}
 \partial_t m =-\alpha m\times(m\times \Delta m)+ m\times(\Delta m+\xi),\quad  \text{in }(0,T)\times D\,,\\
 \partial_n{m}(t,x)=0\quad \text{on }(0,T)\times\partial D\,,\\
 m(0,x):=m_0(x)\quad \forall x\in D\,.\\
\end{cases}
\end{align}
According to physicists, $\xi$ should be a Gaussian space-time white noise (see for instance, the review article \cite{Berkov} and references therein), that is uncorrelated in space. However, due to the lack of regularity of the space time white noise, equation \eqref{LLG_formelle_2} is not expected to possess a well defined solution in this case, and we therefore consider in this article a more regular noise in space. Namely, let $W$ be a cylindrical Wiener process that is given by the expression
\begin{equation*}
 W(t)=\sum_{i\in\mathbb{N}}\beta_i(t)e_i \,,
\end{equation*}
where $(e_i)_{i\in\mathbb{N}}$ denotes a complete orthonormal system of $L^2(D)^3$, and $(\beta_i)_{i\in\mathbb{N}}$ stands for a sequence of
real valued and independent brownian motions.
Then, writing for each $i\in\mathbb{N}$, $G_i:=Ge_i$,  we set
\begin{equation*}
 \xi(t,x)=G\dot W=\sum_{i\in\mathbb{N}}\dot\beta_i(t)G_i(x)\,,
\end{equation*} 
where $G$ is a given Hilbert-Schmidt operator from the space $L^2(D)^3$ into the space $H^2(D)^3$.

As long as we have not specified the choice of the stochastic integral, \eqref{LLG_formelle_2} may have different meanings \cite{van1981ito}.
It is well-known (see for instance \cite{REZNIKOFF2004}) that in order to satisfy the geometrical constraint \eqref{sphere}, the product with the noise term must be understood in the Stratonovich sense, which leads to the stochastic Landau-Lifshitz equation (SLL), where, for simplicity, we have set the parameter $\alpha$
to one :
\begin{equation}\label{LLG_Strato}
\mathrm{d}m=\big(-m\times(m\times\Delta m)+m\times\Delta m\big) \dt+m\times\circ (G \dW),\text{ for } (t,x)\in[0,T]\times D.
\end{equation}
Here we have denoted by ``$\circ\,\mathrm{d}$'' the Stratonovich differential, and we will denote by ``$\mathrm{d}$'' the Ito differential.
We set the initial condition $m(0,x)=m_0(x)$, for any $x\in D$.

In order to work with a non-anticipative integral, we change \eqref{LLG_Strato} to its Ito form.
Using the formal relation between the Stratonovich and Ito differentials,
a corresponding Ito formulation of \eqref{LLG_Strato} is obtained by adding a correction term to the drift of \eqref{LLG_Strato}. This term is what we may call in the sequel ``the Ito correction''. In this sense, the noise term can be rewritten as follows (see e.g.\  \cite{banas2013stochastic,BANASBRZEZNIAKPROHLNEKLYUDOV2013,BRZEZNIAKGOLDYS2013})~:
\begin{equation*}
 m\times\circ(G\dW)=m\times(G\dW)+\frac{1}{2}\sum_{i\in\mathbb{N}}(m\times G_i)\times G_i \dt.
\end{equation*}
Noticing furthermore that $m\times(m\times\Delta m)$ formally equals $-\Delta m-m|\nabla m|^2$, we rewrite equation \eqref{LLG_Strato} as  
\begin{equation}\label{LLG_Ito}
\mathrm{d}m =\big(\Delta m+m|\nabla m|^2+m\times\Delta m+\frac{1}{2}\sum_{i\in\mathbb{N}} (m\times G_i)\times G_i\big)\dt  +m\times (G \dW )\,.
\end{equation}
We refer to \cite{banas2013stochastic} for a review of the existing results on equation \eqref{LLG_Ito}.

The so-called Gilbert form (SLLG) is (still formally) obtained by applying the operator $(\mathrm{Id}-m\times\cdot)$ to the previous equation~:
\begin{eqnarray}
\label{LLG_GILBERT_form}
 \mathrm{d}m -m\times\mathrm{d}m&=&\big[2\big(\Delta m+m|\nabla m|^2\big)+\frac12\sum_{i\in\mathbb{N}}
 (\mathrm{Id}-m\times) \big((m\times G_i)\times G_i\big)\big]\dt \nonumber \\
& & +(\mathrm{Id}-m\times)\big(m\times (G \dW )\big)\,.
 \end{eqnarray}
Equivalence between \eqref{LLG_Ito} and \eqref{LLG_GILBERT_form} is not clearly stated in the litterature and we therefore establish it in Remark \ref{equivalence_LLG}.

Developing numerical schemes for the simulation of LLG plays a prominent role in the modeling of ferromagnetic materials. We refer the reader to \cite{cimrak2005error,cimrak2007survey,cimrak2004iterative} for an overview of the literature on the subject. However, reliable schemes for the simulation of (SLL, SLLG) remain very few. Probably the first scheme for which convergence can be proved is given in \cite{BANASBRZEZNIAKPROHLNEKLYUDOV2013} and is based on a Crank-Nicolson-type time-marching evolution which relies on a nonlinear iteration solved by a fixed point method. On the other hand, there has been in the past recent years an intensive development of a new class of numerical methods for LLG, based on a linear iteration, and for which unconditional convergence and stability can be shown \cite{alouges2008new,alouges2006convergence,bartels2008numerical,kritsikis2013beyond}.
 The aim of this paper is to extend the ideas developed there and generalize the scheme in order to take into account the stochastic term. We only consider a time semi-discrete approximation of (SLL, SLLG) for which we show the unconditional convergence when the time step tends to $0$.
Proving the convergence of the fully discrete approximation (using a finite element method in space) would not cause any major difficulty (see \cite{alouges2006convergence} for details).

We think that the methodology we develop can be generalized for stochastic differential or partial differential equations with a geometrical constraint. It is definitely different from -- though related to -- the approach of \cite{lelievre2008analyse} (see Remark \ref{lelievre} below). 

\paragraph{Notation.}
Throughout this paper, we assume that $T>0$ is a given constant and
$\left(\Omega,\mathcal{F},\mathbb{P},(\mathcal{F}_t)_{t\in[0,T]},(W_t)_{t\in[0,T]}\right)$
is a stochastic basis, that is $(\Omega,\mathcal{F},\mathbb{P})$ is a probability space, $(\mathcal{F}_t)_{t\in[0,T]}$ is
a filtration and $(W_t)_{t\in[0,T]}$ a cylindrical Wiener process adapted to $(\mathcal{F}_t)$.
The domain $D\subset\mathbb{R}^3$ is supposed to be bounded ; we denote by $a\cdot b$, where $a,b\in\mathbb{R}^3$ (resp.\ $\mathbb{R}^{3\times3}$), the standard scalar product in $\mathbb{R}^3$ (resp.\ $\mathbb{R}^{3\times3}$), and by $|\cdot|$ the associated euclidean norm. Norms in Banach spaces are in turn denoted by $\|\cdot\|$. In particular, the notation $\|f\|_{p,x}$ will be used to designate indifferently the $L^p(D)^3$ or $L^p(D)^{3\times3}$ norm. The inner product in the space $L^2(D)^3$ (respectively $L^2(D)^{3\times3}$) of square integrable functions with values in $\mathbb{R}^3$ (respectively $\mathbb{R}^{3\times3}$) is denoted by $(\cdot,\cdot)_{2,x}$, namely
\begin{equation*}
\forall f,g\in L^2(D)^3\,,\qquad
(f,g)_{2,x}:=\int_D f(x)\cdot g(x)\dx\,,
\end{equation*}
and
\begin{equation*}
\forall F,G\in L^2(D)^{3\times3}\,,\qquad
(F,G)_{2,x}:=\int_D F(x)\cdot G(x)\dx\,.
\end{equation*}
The notation $\mathcal{C}\big([0,T];X\big)$, where $X$ is a Banach space, is used to denote the space of continuous functions from $[0,T]$ into $X$.
Classical Sobolev spaces of $\mathbb{R}^3$-valued functions are denoted by $W^{\alpha,p}(D)$, $\alpha\in\mathbb{R}$, or $H^\alpha(D)$ when $p=2$,  (see e.g.\ \cite{ADAMSFOURNIER2003}). Finally, the norm of a Hilbert-Schmidt operator from $L^2(D)$ into $H^{\alpha}(D)$ is denoted by $\|\cdot\|_{2,\alpha}$.
For a given number of time intervals $N\in\mathbb{N}^*$, we define the time step ${\Delta t}:=\frac{T}{N}$, and ${\Delta{W}_N^n}:=W((n+1){\Delta t})-W(n{\Delta t}) $, for any $n$ with $0\leq n\leq N-1$.
Therefore $G{\Delta{W}_N^n}$ is a gaussian random variable on $L^2(D)^3$ with covariance
operator $(\Delta t)GG^*$.

\section{Main result}
\label{sec:main_result}

Our purpose is to analyse a semi-implicit scheme with parameter $\theta\in(\frac{1}{2},1]$. Unlike the approach used in \cite{BANASBRZEZNIAKPROHLNEKLYUDOV2013}, we use the Gilbert form of the equation, i.e.\ equation \eqref{LLG_GILBERT_form}. 
This approach allows us to overcome the difficulty of solving a nonlinear system at each step of the algorithm. Given the data $m_N^n$, where $N$ is the number of time steps, and $n\in\{0,\dots,N-1\}$, which is an approximation of $m(n\Delta t)$, the unknown $v_N^n$, namely the tangential increment of $m_N^n$, can be found simply by solving a \emph{linear} system. Indeed, following an idea of \cite{alouges2006convergence}, one may search $v_N^n$ in the subset of $H^1(D)^3$ whose elements are almost everywhere orthogonal to $m_N^n$, so that the non linear term in $m_N^n\times(m_N^n\times\Delta m_N^n)=-\Delta m_N^n - m_N^n|\nabla m_N^n|^2$ vanishes when testing against functions that also satisfy this constraint. Roughly speaking, the test functions in the following formulation \eqref{pv} "only see" the part of $m_N^{n+1}-m_N^n$ which is orthogonal to $m_N^n$, but this is however sufficient, as shown in Section 6.

Let us now describe the scheme rigorously.
We fix the parameter 
\begin{equation}\label{theta_hyp}
 \theta\in(\frac{1}{2},1]\,,
\end{equation}
and assume that the operator $G:L^2(D)^3\rightarrow H^2(D)^3$ satisfies
\begin{equation}\label{G_hypothesis}
\|G\|_{2,2}^2=\sum_{i\in\mathbb{N}}\|G_i\|_{H^2(D)^3}^2<\infty\,.
\end{equation}

Our algorithm reads as follows, for a given integer $N>0$ :

\paragraph{Algorithm (*)~:}
{\itshape Fix
\begin{equation}\label{algo_1}
m_N^0:=m_0\in H^1(D)^3\,, 
\end{equation}
and for any $n\in\{0,\dots,N-1\}$, suppose that the random variable  $m_N^n(\omega, \cdot)\linebreak\in H^1(D)^3$ is known. 
Let $v_N^n(\omega,\cdot)$ be the unique solution in the space \begin{equation*}
\mathbb{W}_{N,n}(\omega)~:=\Big\{ \psi\in H^1(D)^3, \;   \forall x\in D, \;  \psi(x)\perp{m}_N^n(\omega,x) \Big\}\,,
\end{equation*}  of the variational problem~: $\forall \varphi \in \mathbb{W}_{N,n}(\omega)$,
\begin{flalign}
 \label{pv}
\Big(v_N^n-  {m}_N^n&\times {v}_N^n ,\varphi\Big)_{2,x} + 2\theta{\Delta t}\Big(\nabla  {v}_N^n,\nabla\varphi\Big)_{2,x}&\nonumber \\
=&-2{\Delta t}\Big(\nabla   {m}_N^n,\nabla\varphi\Big)_{2,x}+ \Big((\mathrm{Id}- {m}_N^n\times)\big(  {m}_N^n\times G{\Delta{W}_N^n}\big),\varphi\Big)_{2,x}&\nonumber\\
& +\frac{{\Delta t}}{2}\sum_{i\in\mathbb{N}}\Big((\mathrm{Id}-  {m}_N^n\times)\big((  {m}_N^n\times G_i)\times G_i\big),\varphi\Big)_{2,x}.&
\end{flalign}
Then, we set, for all $(\omega,x)\in\Omega\times D$,
\begin{equation}\label{pv_renorm}
  {m}_N^{n+1}(\omega,x)=\frac{  {m}_N^n(\omega,x)+ {v}_N^n(\omega,x)}{|  {m}_N^n(\omega,x)+ {v}_N^n(\omega,x)|}\,.
\end{equation}
}

Note that the formulation \eqref{pv} is a $\theta$-scheme applied to the variational formulation of equation \eqref{LLG_GILBERT_form} (see \cite{alouges2006convergence}).
One has $  {m}_N^n\in H^1(D)^3$ a.s., for any $n\in\{0,\dots,N\}$ and $( {m}_N^n)_{0\leq n\leq N}$ is adapted to the filtration $(\mathbb{F}_N^n)_{0\leq n\leq N}$ defined by
\begin{equation}\label{discrete_filtration}
\mathbb{F}_N^n:=\sigma\big\{GW(k\Delta t)\,,0\leq k\leq n\big\}\,.
\end{equation}
Indeed, it is not difficult to prove that under the above assumptions, problem \eqref{pv} admits a unique solution $ {v}_N^n(\omega,\cdot)\in\mathbb{W}_{N,n}(\omega)$ (see \cite{alouges2006convergence} for a proof in the deterministic case). The noise and correction terms do not alter the hypotheses of the Lax-Milgram theorem. Moreover, this solution depends continuously in $H^1(D)^3$ on the two arguments $({m}_N^n\,,G{\Delta{W}_N^n})$, for the $H^1(D)^3\times L^2(D)^3$ topology. It implies in particular that the law of $ {v}_N^n$ on $H^1(D)^3$ only depends on the law of $( {m}_N^n\,,G{\Delta{W}_N^n})$ on $H^1(D)^3\times L^2(D)^3$.

\begin{remark}
\label{lelievre}
As mentioned before, the approach here is different from the one in \cite{lelievre2008analyse}, where the approximation of solutions of some Stratonovich stochastic differential equation with values in a manifold is considered. Indeed, in \cite{lelievre2008analyse}, the scheme consists in using the explicit Euler scheme (which approximates the Ito equation) on one time step, and then projecting the solution on the manifold.
Here, we do not approximate the It\^o equation, since part of the It\^o correction is put in the increment. We will see that the projection on the manifold (the sphere here) brings the remaining part of the It\^o correction..
\end{remark}

We now give the definition of the martingale solutions of equation \eqref{LLG_Ito} that we consider here, which is similar to
the one in \cite{BANASBRZEZNIAKPROHLNEKLYUDOV2013,BRZEZNIAKGOLDYS2013}).

\begin{definition}[Martingale solution]
 Given $T>0$, a {martingale solution} on $[0,T]$ of \eqref{LLG_Ito} is given by a filtered probability space $\big(\tilde\Omega,\mathcal{\tilde F},\mathbb{\tilde P},(\mathcal{\tilde{F}}_t)\big)$, together with \linebreak$H^1(D)^3$-valued, progressively measurable processes $G\tilde W$ and $\tilde m$ defined on this space, where
 $G\tilde W$ is a Wiener process on $\big(\tilde\Omega,\mathcal{\tilde F},\mathbb{\tilde P},(\mathcal{\tilde{F}}_t)\big)$, with covariance operator $GG^*$, and $\tilde m$ satisfies the following assumptions :
\begin{enumerate}
 \item $\tilde m(\omega,\cdot) \in C([0,T];L^2(D)^3)$, a.s.
 \item for any $t\in[0,T]$, $\tilde m(t)$ belongs to $H^1(D)^3$, and the random variable $\Delta \tilde m + \tilde m|\nabla \tilde m|^2 + \tilde m\times\Delta \tilde m+\frac{1}{2} \sum_{i\in\mathbb{N}}( \tilde m\times G_i)\times G_i$ takes its values in the space $L^1(0,T;\linebreak L^2(D)^3)$, a.s.
\item $\tilde m$ satisfies \eqref{LLG_Ito}; more precisely,
\begin{eqnarray*}
  \tilde m(t)&=&{m}_0+\int_0^t\Big(\Delta \tilde m(s) + \tilde m(s)|\nabla \tilde m(s)|^2 + \tilde m(s)\times\Delta \tilde m(s)\\
  & & +\frac{1}{2} \sum_{i\in\mathbb{N}}( \tilde m(s)\times G_i)\times G_i\Big)\ds +\int_0^t \tilde m(s)\times (G\ddW(s))
\end{eqnarray*}
where the first integral is the Bochner integral in $L^2(D)^3$, and the second is the Ito integral of a predictable $H^1(D)^3$-valued process.
\item $|\tilde m(\omega,t,x)|=1\,$ for almost every $(\omega,t,x)\in \tilde\Omega\times[0,T]\times D$.
\end{enumerate}
\end{definition}

\begin{remark}\label{equivalence_LLG}
Note that if $\tilde m$ is a martingale solution of \eqref{LLG_Ito}, then we can rewrite the stochastic integral of the predictable process $s\mapsto\tilde m(s)\times$ with respect to the semimartingale $\tilde m$ as
\begin{multline*}
\int_0^t \tilde m(s)\times \mathrm{d}\tilde m(s)\\=\int_0^t \tilde m(s)\times \mathrm{d}\left(\int_0^sI(\sigma)\mathrm{d}\sigma\right)+\int_0^t \tilde m(s)\times \mathrm{d}\left(\int_0^s \tilde m(\sigma)\times G\mathrm{d}\tilde W(\sigma)\right)
\end{multline*}
where $I$ is given by
 $\forall s\in[0,T]$:
\begin{eqnarray*}
I(s)&=& \Delta\tilde m(s) + {\tilde m(s)}|\nabla {\tilde m(s)}|^2 +\tilde m(s)\times\Delta \tilde m(s)\\
&&+\frac{1}{2}\sum_{i\in\mathbb{N}}({\tilde m(s)}\times{G_i})\times{G_i} \in L^2(\Omega\times D)^3\,.
\end{eqnarray*}
It then follows from classical properties of stochastic integrals with respect to semimartingales that
$$
\int_0^t (\mathrm{Id}-\tilde m(s)\times) \mathrm{d}\tilde m(s)=\int_0^t (\mathrm{Id}-\tilde m(s)\times) I(s)\ds+\int_0^t (\mathrm{Id}-\tilde m(s)\times)G\mathrm{d}\tilde W(\sigma)\,,
$$
and since for almost all $\omega,t,x$, $|\tilde m(\omega,t,x)|=1$, $\tilde m$ is also a solution to \eqref{LLG_GILBERT_form}. Thus \eqref{LLG_Ito} and \eqref{LLG_GILBERT_form} are in fact equivalent.
\end{remark}

Our main result is then given by the following theorem, and says that, up to a subsequence, the discrete solution $m_N$ of the algorithm (*)
converges in law to a martingale solution of equation \eqref{LLG_Ito}.

\begin{theorem}[Convergence of the algorithm]\label{theo_main_result}
For every $N\in\mathbb{N}^*$, we define the progressively measurable $H^1(D)^3$-valued process $m_N$ by:
\begin{equation*}
  m_N(t):=  {m}_N^n\quad \text{if } t\in[n{\Delta t},(n+1){\Delta t}).
\end{equation*}
There exists a {martingale solution} of \eqref{LLG_Ito}
$\big(\tilde\Omega, \mathcal{\tilde F}, \mathbb{\tilde P},(\mathcal{\tilde F}_t)_{t\in[0,T]}, (\tilde W_t)_{t\in[0,T]},\tilde m\big)\,,$ and a sequence $(\tilde m_{N})_{N\in\mathbb{N}^*}$ of random processes defined on $\tilde\Omega$,
with the same law as $m_N$, so that up to a subsequence, the following convergence holds :
\begin{equation*}
 \tilde m_N\underset{N\to\infty}{\longrightarrow}\tilde m,\quad \text{in }L^2(\tilde\Omega\times[0,T]\times D)^3.
\end{equation*}
\end{theorem}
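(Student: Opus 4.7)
The plan is to follow a classical martingale/compactness strategy combined with Skorokhod's representation theorem, which is by now standard for SPDEs with non-Lipschitz drifts. The proof splits into three main stages: a priori estimates, tightness and Skorokhod, and identification of the limit as a martingale solution of \eqref{LLG_Ito}.

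First I would derive uniform a priori estimates. Testing \eqref{pv} with $\varphi=v_N^n\in\mathbb{W}_{N,n}$ (which is legal since the test function satisfies the orthogonality constraint), the skew-symmetric term $(m_N^n\times v_N^n,v_N^n)_{2,x}$ vanishes, and one obtains a discrete energy inequality controlling $\|\nabla m_N^{n+1}\|_{2,x}^2$ in terms of $\|\nabla m_N^n\|_{2,x}^2$, $\|v_N^n\|_{H^1}^2$, and stochastic increments. The condition $\theta>1/2$ is exactly what is needed to absorb the implicit quadratic term $2\theta\Delta t\|\nabla v_N^n\|_{2,x}^2$ and obtain unconditional stability. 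Using that $|m_N^{n+1}|=1$ pointwise from \eqref{pv_renorm}, a discrete Gronwall argument together with the Burkholder--Davis--Gundy inequality and assumption \eqref{G_hypothesis} yields
\[
\mathbb{E}\Big[\max_{0\le n\le N}\|\nabla m_N^n\|_{2,x}^{2p}\Big]+\mathbb{E}\Big[\sum_{n=0}^{N-1}\|v_N^n\|_{H^1}^2\Big]\le C(p,T)
\]
for every $p\ge 1$, plus a fourth-moment bound $\sum_n\mathbb{E}\|v_N^n\|_{2,x}^4=O(\Delta t)$.

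Next I would set up tightness. Introduce the piecewise linear interpolant $\widehat m_N$ of the nodes $\{m_N^n\}$ and note that $m_N$ and $\widehat m_N$ have the same $L^2(0,T;L^2(D)^3)$-limits thanks to the estimates on $v_N^n$. Combining the uniform $L^\infty(0,T;H^1)$ bound with a Hölder-in-time estimate for $\widehat m_N$ derived from the scheme (the stochastic increment gives $\alpha<1/2$), Aubin--Lions and a classical tightness criterion show tightness of the laws of $(m_N,GW)$ on the Polish space $L^2(0,T;L^2(D)^3)\times C([0,T];U_0)$ for a suitable Hilbert space $U_0\supset L^2(D)^3$. Applying Prokhorov and Skorokhod along a subsequence gives a new probability space carrying $(\tilde m_N,G\tilde W_N)\overset{\mathrm{law}}{=}(m_N,GW)$, and almost sure convergence $\tilde m_N\to\tilde m$ in $L^2(0,T;L^2(D)^3)$ and $G\tilde W_N\to G\tilde W$ in $C([0,T];U_0)$; the uniform $H^1$-bound transfers and yields $\nabla\tilde m_N\rightharpoonup\nabla\tilde m$ weakly in $L^2(\tilde\Omega\times(0,T)\times D)$, while $|\tilde m|=1$ passes to the limit pointwise.

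Finally I would identify $\tilde m$ as a martingale solution. The strategy is to rewrite the increment $m_N^{n+1}-m_N^n$ in the form \eqref{pv_renorm} as
\[
m_N^{n+1}-m_N^n=v_N^n-\tfrac{1}{2}|v_N^n|^2\,m_N^n+R_N^n,
\]
with $R_N^n$ a cubic remainder controlled by the fourth-moment bound. Summing \eqref{pv} telescopically, using $\nabla m_N^n\cdot\nabla\varphi=\nabla m_N^n\cdot\nabla\varphi$ after integration by parts of the nonlinearity (here one exploits that $|m_N^n|=1$ gives $m_N^n\cdot\partial_\alpha m_N^n=0$, so the $m|\nabla m|^2$ term re-appears through the renormalization defect), the deterministic terms pass to the limit by strong $L^2$-convergence and weak $H^1$-convergence tested against smooth $\varphi$. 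For the stochastic integral one identifies the limit by the Bensoussan--Brzezniak martingale method: one shows that the discrete martingale $M_N(t):=\tilde m_N(t)-\tilde m_0-\int_0^t(\text{drift})\,\mathrm ds$ converges in law to a martingale whose quadratic variation matches that of $\int_0^\cdot\tilde m\times G\,\mathrm d\tilde W$.

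The main obstacle, and the novel feature of this scheme, is point (ii) below: (i) passing to the limit in the quadratic nonlinearity $\tilde m|\nabla\tilde m|^2$ requires that the weakly convergent $\nabla\tilde m_N$ does not lose mass — this is essentially a div-curl or defect-measure argument made available because $|\tilde m_N|=1$; but more subtly, (ii) verifying that the renormalization step \eqref{pv_renorm} produces \emph{exactly} the Stratonovich-to-Itô correction $\tfrac12\sum_i(\tilde m\times G_i)\times G_i$ in the drift. This requires a careful expansion of $|m_N^n+v_N^n|^{-1}=1-\tfrac12|v_N^n|^2+O(|v_N^n|^4)$, combined with the observation that the stochastic component of $v_N^n$ is, to leading order, $(\mathrm{Id}-m_N^n\times)(m_N^n\times G\Delta W_N^n)$, whose squared norm has expectation proportional to $\Delta t\sum_i|(\mathrm{Id}-m_N^n\times)(m_N^n\times G_i)|^2$, precisely matching the desired Itô correction. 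Making this rigorous — i.e.\ proving that the sum of the quadratic remainders $-\tfrac12\sum_n|v_N^n|^2 m_N^n$ converges in probability to the right Itô term and not merely in expectation — is where the bulk of the technical work lies.
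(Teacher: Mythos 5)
Your overall architecture (a priori estimates, tightness plus Skorokhod, identification of the limit as a martingale solution) matches the paper's, but the execution contains a fatal error at the first step and several further gaps that affect the identification of the drift.

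The choice of test function $\varphi=v_N^n$ does not yield the uniform energy bound. After inserting $\varphi=v_N^n$ into \eqref{pv} and combining with \eqref{ineg}, one is left on the right-hand side with the term $\tfrac1{\Delta t}\big((\mathrm{Id}-m_N^n\times)(m_N^n\times G\Delta W_N^n),v_N^n\big)_{2,x}$. Since $v_N^n$ is correlated with $G\Delta W_N^n$ (indeed $v_N^n\approx m_N^n\times G\Delta W_N^n$ to leading order), this term has a non-vanishing expectation of order $\|G\|_{2,0}^2$ per time step. Summing over $n$ gives $\mathbb{E}\big[\|\nabla m_N^N\|_{2,x}^2\big]=O(N)$, which blows up as $\Delta t\to0$. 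The paper's key insight is to test instead with $\varphi=v_N^n-A_N^n$ where $A_N^n:=m_N^n\times G\Delta W_N^n$: the cancellation
$\big((\mathrm{Id}-m_N^n\times)(v_N^n-A_N^n),\,v_N^n-A_N^n\big)_{2,x}=\|v_N^n-A_N^n\|_{2,x}^2$
produces a discrete It\^o-formula-type structure, removes the problematic cross term, and simultaneously yields the crucial estimate $\mathbb{E}\big[\sum_n\|v_N^n-A_N^n\|_{2,x}^2\big]\leq C\Delta t$, which makes $w_N^n:=(v_N^n-A_N^n)/\Delta t$ bounded in $L^2(\Omega\times[0,T]\times D)$. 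That bound is what lets the drift be extracted as a weak limit; with only the estimate $\mathbb{E}[\sum_n\|v_N^n\|^2]\leq C$ you cannot separate drift from diffusion.

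The identification of the stochastic increment is also wrong. You claim the leading-order stochastic component of $v_N^n$ is $(\mathrm{Id}-m_N^n\times)(m_N^n\times G\Delta W_N^n)$, and that its squared norm gives the It\^o correction. Setting $\Delta t=0$ in \eqref{pv} gives $(\mathrm{Id}-m_N^n\times)v_N^n=(\mathrm{Id}-m_N^n\times)A_N^n$ on the tangent space, and since $(\mathrm{Id}-m_N^n\times)$ is invertible on $\{m_N^n\}^\perp$ (with $|m_N^n|=1$), the leading order of $v_N^n$ is $A_N^n$ itself, not $(\mathrm{Id}-m_N^n\times)A_N^n$. For $v\perp m$, $|m|=1$, one has $|(\mathrm{Id}-m\times)v|^2=2|v|^2$, so your version overshoots the It\^o correction by a factor $2$. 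Moreover, the paper does not obtain the full correction $\tfrac12\sum_i(\bar m\times G_i)\times G_i$ from the renormalization alone: the renormalization step supplies only the component along $\bar m$ (Proposition \ref{pro_id_bar_X}), while the tangential component $\tfrac12\Pi_{\bar m^\perp}\sum_i(\bar m\times G_i)\times G_i$ emerges from the explicit correction term built into \eqref{pv} (Proposition \ref{pro_bar_w}). Your proposal conflates these two contributions.

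Two further, less severe points. First, there is no need for a div-curl or defect-measure argument to pass to the limit in $\tilde m|\nabla\tilde m|^2$: the paper instead tests with $\bar m_N\times\Phi$, so the problematic product $\partial_{x_k}\bar m_N\cdot\partial_{x_k}(\bar m_N\times\Phi_N)$ reduces (by orthogonality $\partial_{x_k}\bar m_N\cdot(\partial_{x_k}\bar m_N\times\Phi_N)=0$) to a term that is linear in $\nabla\bar m_N$; the quadratic nonlinearity only reappears after the double-cross-product manipulation with $\Phi=\bar m\times\Xi$ \emph{at the limit level}. This is the essential trick inherited from the deterministic scheme, and your compensated-compactness suggestion misses it. Second, tightness must be established for the triple $(m_N,X_N,GW)$ (not just $(m_N,GW)$): almost sure convergence of $X_N$ after Skorokhod is what allows one to prove that the limit $\bar X$ is a continuous square-integrable martingale and to apply the martingale representation theorem.
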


In order to prove Theorem \ref{theo_main_result}, we proceed in several steps.
In Section 3, we establish uniform estimates for several processes. We first establish a uniform bound on the Dirichlet energy of $m_N$, thanks to the variational formulation, and an appropriate choice of the test function. 
Section 4 is devoted to the proof of the tightness of the sequence $(m_N)$ on the space $L^2([0,T]\times D)^3$. 
After a change of probability space, we can assume that there exists an almost sure limit $m$ of $(m_N)$, that is
\begin{equation*}
  m_N\underset{N\to\infty}{\longrightarrow}m\quad \text{a.s.\ in } L^2([0,T]\times D)^3.
\end{equation*}
Then, setting
\begin{equation}\label{decomposition}
  m_N(t) = {m}_0 + F_N(t)+ X_N(t) , 
\end{equation} 
where $(X_N(n\Delta t))_{0\leq n\leq N-1}$ defines an $L^2(D)^3$-valued discrete parameter martingale, with respect to the filtration $(\mathbb{F}_N^n)_{0\leq n\leq N-1}$ (see \eqref{discrete_filtration}), and $F_N(t)$ is, for each $N$, a deterministic function of $\left.m_N\right|_{[0,t]}$,
we use \eqref{pv} and the previous energy estimates to identify $F_N(t)$ and its limit up to a subsequence.
In section 5, we show that, still up to a subsequence, $X_N(t)$ converges to a limit $X(t)$ which is a square-integrable continuous martingale with an explicit quadratic variation. The martingale representation theorem allows us to conclude~:
there exists a new filtered probability space for which the limit of the martingale part is a stochastic integral with respect to a Wiener process $G\tilde W$
with covariance operator $GG^*$.
Finally we use the limit of \eqref{decomposition} and this latter stochastic integral in order to identify the equation satisfied by $m(t)$.
The explicit form of the limit $F(t)$ of $F_N(t)$ as $N\to\infty$ is the Bochner integral of the $L^2(D)^3$-valued process $t'\mapsto\Delta m(t') +m(t')|\nabla m(t')|^2 + m(t')\times\Delta m(t') +\frac{1}{2}\sum_{i\in\mathbb{N}}(m(t')\times G_i)\times G_i$ on the time interval $[0,t]$, which allows us to conclude.

\section{Energy estimates}
\label{sec:energy_estimates}

Fix $N>0$, and set $ {m}_N^0= {m}_0$.
Let $( {m}_N^n)_{0\leq n\leq N}$ and $( {v}_N^n)_{0\leq n\leq N}$ be given by the algorithm (*).
In all what follows, we write 
\begin{equation}\label{nota_AN}
  {A}_N^n:=  {m}_N^n\times(G\Delta W_N^n)\,.
\end{equation} 
This term corresponds to the noise term which is added at each step of the algorithm.
Thanks to the Gaussian properties of $G{\Delta{W}_N^n}$, the fact that $\| {m}_N^n\|_{L^\infty(D)^3}\leq 1$, and the Sobolev embeddings, we have the following obvious, but useful estimates~: for all $n\in\{0,\dots, N\}$,
\begin{equation}\label{estim_A_2}
 \mathbb{E}\big[\| {A}_N^n\|_{2,x}^2\big]\leq{\Delta t}\|G\|_{2,0}^2\,,
\end{equation}
and
\begin{equation}\label{estim_A_4}
 \mathbb{E}\big[\| {A}_N^n\|^4_{L^4}\big]\leq C({\Delta t})^2\|G\|^4_{2,1}.
\end{equation}

\begin{proposition}\label{pro_estim_discretes} There exists a constant $C=C(T, {m}_0,\|G\|_{2,2})$, so that for all $N\in \mathbb{N}^*$,
\begin{equation}\label{estim_u}
 \max_{n=0\dots N}\mathbb{E}\left[\|\nabla   {m}_N^n\|_{2,x}^2\right]\leq C,
\end{equation}
\begin{equation}\label{estim_w}
 \mathbb{E}\left[\sum_{n=0}^{N-1}\| {v}_N^n- {A}_N^n\|_{2,x}^2\right]\leq C{\Delta t},
\end{equation}
\begin{equation}\label{estim_v}
 \mathbb{E}\left[\sum_{n=0}^{N-1}\| {v}_N^n\|_{2,x}^2\right]\leq C,
\end{equation}
\begin{equation}\label{estim_nabla_v}
 \mathbb{E}\left[\sum_{n=0}^{N-1}\|\nabla  {v}_N^n\|_{2,x}^2\right]\leq C\,.
\end{equation}
\end{proposition}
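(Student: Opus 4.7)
The plan is to test the variational formulation \eqref{pv} with $\varphi = v_N^n \in \mathbb{W}_{N,n}$ (using the pointwise identity $(m_N^n\times v_N^n,v_N^n)_{2,x}=0$) to get an energy identity for $\|v_N^n\|_{2,x}^2 + 2\theta\Delta t\|\nabla v_N^n\|_{2,x}^2$, and to combine this with the Alouges-type projection estimate $\|\nabla m_N^{n+1}\|_{2,x}^2\leq\|\nabla(m_N^n+v_N^n)\|_{2,x}^2$. The latter is valid because $v_N^n\perp m_N^n$ pointwise forces $|m_N^n+v_N^n|^2=1+|v_N^n|^2\geq 1$, which makes $u\mapsto u/|u|$ non-expansive in Dirichlet energy on $\{|u|\geq 1\}$. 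Eliminating the cross term $(\nabla m_N^n,\nabla v_N^n)_{2,x}$ between the two expressions and using $\theta>\tfrac{1}{2}$ yields the pathwise telescoping inequality
\begin{equation*}
\|\nabla m_N^{n+1}\|_{2,x}^2-\|\nabla m_N^n\|_{2,x}^2 + \frac{\|v_N^n\|_{2,x}^2}{\Delta t} + (2\theta-1)\|\nabla v_N^n\|_{2,x}^2 \leq \frac{(A_N^n-m_N^n\times A_N^n,v_N^n)_{2,x} + N_2^n}{\Delta t},
\end{equation*}
where $N_2^n=\tfrac{\Delta t}{2}\sum_i((\mathrm{Id}-m_N^n\times)((m_N^n\times G_i)\times G_i),v_N^n)_{2,x}$ is the It\^o-correction pairing.

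The hard part is the stochastic term on the right-hand side, for which Cauchy--Schwarz combined with \eqref{estim_A_2} gives a contribution of order $\|G\|_{2,0}^2$ per step, diverging when summed over $n$. To circumvent this, I would exploit the linearity of \eqref{pv} in its right-hand side to write $v_N^n=\eta_n+\mu_n$, where $\eta_n$ solves \eqref{pv} with $G\Delta W_N^n$ replaced by $0$ (hence $\mathbb{F}_N^n$-measurable, playing the role of the deterministic $\theta$-step) and $\mu_n$ solves the same bilinear equation with forcing $(A_N^n-m_N^n\times A_N^n,\varphi)_{2,x}$ alone. Since $\mu_n$ is linear in $G\Delta W_N^n$, it is $\mathbb{F}_N^n$-conditionally centered, so cross terms such as $\mathbb{E}[(\eta_n,\mu_n)_{2,x}]$, $\mathbb{E}[(\nabla\eta_n,\nabla\mu_n)_{2,x}]$ and $\mathbb{E}[N_2^n(\mu_n)]$ all vanish. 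Testing the $\mu_n$-equation with $\mu_n$ itself then gives $\mathbb{E}[(A_N^n-m_N^n\times A_N^n,v_N^n)_{2,x}] = \mathbb{E}[\|\mu_n\|_{2,x}^2]+2\theta\Delta t\,\mathbb{E}[\|\nabla\mu_n\|_{2,x}^2]$, and the crucial cancellation is that the divergent terms $\mathbb{E}[\|\mu_n\|_{2,x}^2]/\Delta t$ balance between the two sides of the telescoping inequality, leaving only $\mathbb{E}[\|\nabla\mu_n\|_{2,x}^2]$ on the right.

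To control $\sum_n\mathbb{E}[\|\nabla\mu_n\|_{2,x}^2]$, I would perform a second splitting $\mu_n = A_N^n + \mu'_n$, legitimate because $A_N^n\in\mathbb{W}_{N,n}$ pointwise. Then $\mu'_n$ solves the same bilinear problem with forcing $-2\theta\Delta t(\nabla A_N^n,\nabla\varphi)_{2,x}$, and testing with $\mu'_n$ yields $\|\mu'_n\|_{2,x}^2+\theta\Delta t\|\nabla\mu'_n\|_{2,x}^2\leq\theta\Delta t\|\nabla A_N^n\|_{2,x}^2$, in particular $\|\nabla\mu_n\|_{2,x}^2\leq 4\|\nabla A_N^n\|_{2,x}^2$. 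Expanding $\nabla A_N^n = \nabla m_N^n\times G\Delta W_N^n + m_N^n\times\nabla G\Delta W_N^n$ and invoking hypothesis \eqref{G_hypothesis}, the Sobolev embedding $H^2(D)\hookrightarrow L^\infty(D)$, and $|m_N^n|\leq 1$ produces $\mathbb{E}[\|\nabla A_N^n\|_{2,x}^2] \leq C\Delta t(1+\mathbb{E}[\|\nabla m_N^n\|_{2,x}^2])\|G\|_{2,2}^2$. After absorbing $\mathbb{E}[N_2^n(\eta_n)]/\Delta t$ into the LHS via Young's inequality, the summed and expectation-taken inequality reads
\begin{equation*}
\mathbb{E}[\|\nabla m_N^k\|_{2,x}^2] \leq C + C'\Delta t\sum_{n=0}^{k-1}\mathbb{E}[\|\nabla m_N^n\|_{2,x}^2],
\end{equation*}
and the discrete Gronwall lemma delivers \eqref{estim_u}. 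Feeding this bound back into the same inequality yields $\sum_n\mathbb{E}[\|\eta_n\|_{2,x}^2]=O(\Delta t)$ and $\sum_n\mathbb{E}[\|\nabla\eta_n\|_{2,x}^2]=O(1)$; writing $v_N^n-A_N^n=\eta_n+\mu'_n$ and using $\sum_n\mathbb{E}[\|\mu'_n\|_{2,x}^2]\leq\theta\Delta t\sum_n\mathbb{E}[\|\nabla A_N^n\|_{2,x}^2]=O(\Delta t)$ gives \eqref{estim_w}, while the triangle inequalities $\|v_N^n\|_{2,x}^2\leq 2\|v_N^n-A_N^n\|_{2,x}^2+2\|A_N^n\|_{2,x}^2$ and $\|\nabla v_N^n\|_{2,x}^2\leq 2\|\nabla\eta_n\|_{2,x}^2+8\|\nabla A_N^n\|_{2,x}^2$, combined with \eqref{estim_A_2} and the bound on $\mathbb{E}[\|\nabla A_N^n\|_{2,x}^2]$, deliver \eqref{estim_v} and \eqref{estim_nabla_v}.
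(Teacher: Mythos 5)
Your proof is correct, but it takes a genuinely different route from the paper. The paper's argument (Lemma \ref{lem_nabla_u_v}) hinges on a single well-chosen test function: it tests \eqref{pv} with $\varphi = v_N^n - A_N^n$, which is admissible since both $v_N^n$ and $A_N^n$ lie in $\mathbb{W}_{N,n}$. The algebraic identity $\big((\mathrm{Id}-m_N^n\times)(v_N^n-A_N^n)\big)\cdot(v_N^n-A_N^n) = |v_N^n-A_N^n|^2$ then makes the noise contribution cancel \emph{pointwise}, almost surely, and the quantity $\|v_N^n - A_N^n\|_{2,x}^2$ appears on the left-hand side directly. The only remaining stochastic terms are $(\nabla m_N^n, \nabla A_N^n)_{2,x}$ (which has zero expectation by independence) and two Young-absorbable cross terms. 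You instead test with $\varphi = v_N^n$, which leaves a pairing $(A_N^n-m_N^n\times A_N^n, v_N^n)_{2,x}$ that does \emph{not} vanish even in expectation; to tame it you introduce the linear decomposition $v_N^n = \eta_n + \mu_n$ and then $\mu_n = A_N^n + \mu'_n$, exploit the $\mathbb{F}_N^n$-conditional centering of $\mu_n$ to kill cross terms in expectation, and balance the divergent $\mathbb{E}[\|\mu_n\|_{2,x}^2]/\Delta t$ between the two sides. This works, but notice that your $v_N^n - A_N^n = \eta_n + \mu'_n$ is exactly the quantity the paper isolates at the outset: the paper's test function choice achieves in one algebraic stroke, and pathwise, what your two-level splitting achieves in expectation. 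A genuine difference in output is that the paper's pathwise identity \eqref{rel_u_v} yields a bound on $\|v_N^n - A_N^n\|_{2,x}^2$ that holds $\omega$-by-$\omega$ before taking expectations (which the paper reuses later, e.g.\ in the increment estimate \eqref{maj2} of Proposition \ref{pro_H_alpha}), whereas your cancellation relies on conditional orthogonality and therefore only produces expectation-level bounds; your proof would not, as written, deliver the pathwise information. For the purposes of Proposition \ref{pro_estim_discretes} alone, both routes are sound.
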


The proof of Proposition \ref{pro_estim_discretes} uses the following remark, together with the estimate of Lemma \ref{lem_nabla_u_v} below, whose proof is postponed to the end of section $3$.
\begin{remark}\label{rema_H_integralenergy_dicreasing}
The renormalization stage decreases the Dirichlet energy. Indeed, it was shown in \cite{ALOUGES1997} that for any map
$\psi\in H^1(D)^3$, such that 
a.e. in $D$, $|\psi(x)|\geq~1$,
one has
\begin{equation}\label{ineg_renorm}
 \int_D\left|\nabla\left(\frac{\psi(x)}{|\psi(x)|}\right)\right|^2\dx\leq \int_D\left|\nabla\left(\psi(x)\right)\right|^2\dx\,.
\end{equation}
\end{remark}

\begin{lemma}\label{lem_nabla_u_v}
For all $\epsilon$ with $0<\epsilon<2\theta-1$, there exists $C=C(\epsilon, \|G\|_{2,2},T)>0$ such that for all $N\in\mathbb{N}^*$, 
and $n=0,\dots,N-1$~:
\begin{multline}\label{Gronwall}
\mathbb{E}\big[|\nabla {m}_N^{n+1}\|_{2,x}^2\big] +\frac{(1-\epsilon)}{{\Delta t}}\mathbb{E}\big[\| {v}_N^n- {A}_N^n\|_{2,x}^2\big] +(2\theta-1-\epsilon)\mathbb{E}\big[\|\nabla  {v}_N^n\|_{2,x}^2\big]\\
\leq (1+C{\Delta t})\mathbb{E}\big[\|\nabla   {m}_N^n\|_{2,x}^2\big]+C{\Delta t}\,.
\end{multline}
\end{lemma}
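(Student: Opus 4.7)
The plan is to combine the energy-monotonicity of the renormalization step (Remark \ref{rema_H_integralenergy_dicreasing}) with the variational identity \eqref{pv}, tested against the carefully chosen element $\varphi=v_N^n-A_N^n\in\mathbb{W}_{N,n}(\omega)$. Admissibility of this test function is easily verified: $A_N^n\cdot m_N^n=0$ pointwise since it is a cross product with $m_N^n$, and $A_N^n\in H^1(D)^3$ thanks to the Sobolev embedding $H^2(D)\hookrightarrow L^\infty(D)$ in dimension three, which yields $\|\nabla A_N^n\|_{2,x}\leq C(1+\|\nabla m_N^n\|_{2,x})\|G\Delta W_N^n\|_{H^2(D)^3}$. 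Since $|m_N^n|=1$ a.e.\ (by induction from \eqref{pv_renorm}) and $v_N^n\cdot m_N^n=0$ a.e., one has $|m_N^n+v_N^n|^2=1+|v_N^n|^2\geq 1$ a.e., so Remark \ref{rema_H_integralenergy_dicreasing} yields the pathwise inequality
\begin{equation}\label{renBD}
\|\nabla m_N^{n+1}\|_{2,x}^2\leq \|\nabla m_N^n\|_{2,x}^2+2(\nabla m_N^n,\nabla v_N^n)_{2,x}+\|\nabla v_N^n\|_{2,x}^2.
\end{equation}

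Plugging $\varphi=v_N^n-A_N^n$ into \eqref{pv} and exploiting the pointwise identities $(m_N^n\times v_N^n)\cdot A_N^n=v_N^n\cdot G\Delta W_N^n$ (which follows from the vector identity $(a\times b)\cdot(a\times c)=|a|^2 b\cdot c-(a\cdot b)(a\cdot c)$ together with $|m_N^n|=1$ and $v_N^n\cdot m_N^n=0$), $A_N^n\cdot G\Delta W_N^n=0$, and $m_N^n\cdot(v_N^n-A_N^n)=0$, all terms involving $(v_N^n,G\Delta W_N^n)_{2,x}$ cancel between the two sides and one obtains the pathwise identity
\begin{align*}
\|v_N^n-A_N^n\|_{2,x}^2+2\theta\Delta t\|\nabla v_N^n\|_{2,x}^2
&=-2\Delta t(\nabla m_N^n,\nabla v_N^n)_{2,x}+2\Delta t(\nabla m_N^n,\nabla A_N^n)_{2,x}\\
&\quad{}+2\theta\Delta t(\nabla v_N^n,\nabla A_N^n)_{2,x}+\tfrac{\Delta t}{2}\textstyle\sum_i(Y_i,v_N^n-A_N^n)_{2,x},
\end{align*}
where $Y_i:=(\mathrm{Id}-m_N^n\times)\bigl((m_N^n\times G_i)\times G_i\bigr)$. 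Dividing this identity by $\Delta t$ and adding to \eqref{renBD} eliminates the cross term $2(\nabla m_N^n,\nabla v_N^n)_{2,x}$ and produces the pathwise bound
\begin{align*}
\|\nabla m_N^{n+1}\|_{2,x}^2+\tfrac{1}{\Delta t}\|v_N^n-A_N^n\|_{2,x}^2+(2\theta-1)\|\nabla v_N^n\|_{2,x}^2
&\leq \|\nabla m_N^n\|_{2,x}^2+2(\nabla m_N^n,\nabla A_N^n)_{2,x}\\
&\quad{}+2\theta(\nabla v_N^n,\nabla A_N^n)_{2,x}+\tfrac{1}{2}\bigl(\textstyle\sum_i Y_i,v_N^n-A_N^n\bigr)_{2,x}.
\end{align*}

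It remains to take expectations and control the three right-hand side terms. The first is mean zero: $m_N^n$ is $\mathbb{F}_N^n$-measurable while $\mathbb{E}[G\Delta W_N^n\mid\mathbb{F}_N^n]=0$, hence $\mathbb{E}[(\nabla m_N^n,\nabla A_N^n)_{2,x}]=0$ after conditioning on $\mathbb{F}_N^n$. The second is handled by Young's inequality, $2\theta|(\nabla v_N^n,\nabla A_N^n)_{2,x}|\leq \epsilon\|\nabla v_N^n\|_{2,x}^2+(\theta^2/\epsilon)\|\nabla A_N^n\|_{2,x}^2$, combined with the expectation estimate $\mathbb{E}\bigl[\|\nabla A_N^n\|_{2,x}^2\bigr]\leq C\Delta t\|G\|_{2,2}^2\bigl(1+\mathbb{E}[\|\nabla m_N^n\|_{2,x}^2]\bigr)$, which follows from expanding $\nabla A_N^n=\nabla m_N^n\times G\Delta W_N^n+m_N^n\times\nabla(G\Delta W_N^n)$ and using the Sobolev bound above. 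The third term is controlled by Cauchy--Schwarz and Young's inequality with weight $\Delta t/\epsilon$: the $\|v_N^n-A_N^n\|^2$ part is absorbed to produce the coefficient $(1-\epsilon)/\Delta t$ on the left-hand side, while the remaining piece is a deterministic quantity of order $\Delta t\|G\|_{2,2}^4$ since $\|\sum_i Y_i\|_{2,x}\leq C\|G\|_{2,2}^2$. Rearranging yields exactly \eqref{Gronwall}. The principal obstacle is precisely this choice of test function: the natural pick $\varphi=v_N^n$ leaves a residual term $(G\Delta W_N^n,v_N^n)_{2,x}/\Delta t$ that is only of order one in expectation (because $v_N^n$ is correlated with $G\Delta W_N^n$ through \eqref{pv}) and cannot be turned into an $O(\Delta t)$ contribution by elementary Young estimates; the substitution $\varphi=v_N^n-A_N^n$ performs the critical cancellation at the pathwise level via the identity $(m_N^n\times v_N^n)\cdot(m_N^n\times G\Delta W_N^n)=v_N^n\cdot G\Delta W_N^n$.
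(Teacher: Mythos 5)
Your proof is correct and follows essentially the same route as the paper: the renormalization inequality from Remark~\ref{rema_H_integralenergy_dicreasing}, the test function $\varphi=v_N^n-A_N^n$ in \eqref{pv} which produces the key pathwise cancellation $\big((\mathrm{Id}-m_N^n\times)(v_N^n-A_N^n)\big)\cdot(v_N^n-A_N^n)=|v_N^n-A_N^n|^2$, the zero-mean observation for $(\nabla m_N^n,\nabla A_N^n)_{2,x}$, and Young's inequality on the remaining two terms. The only (harmless) difference is cosmetic bookkeeping in the intermediate vector identities and the exact constants in the Young step (you absorb $\epsilon\|\nabla v_N^n\|^2$ directly whereas the paper absorbs $\theta\epsilon\|\nabla v_N^n\|^2$ and then uses $\theta\leq 1$); your closing remark on why $\varphi=v_N^n$ would fail is a useful elaboration consistent with the paper's choice.
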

Let us now prove Proposition \ref{pro_estim_discretes} with the help of Lemma \ref{lem_nabla_u_v}.
\begin{proof}[Proof of Proposition \ref{pro_estim_discretes}]
In the sequel, we fix $\epsilon\in(0,2\theta-1)$.
We first prove \eqref{estim_u}.
We deduce from \eqref{Gronwall} that
for all $n=0\dots N$
\begin{equation*}
 \mathbb{E}\big[\|\nabla {m}_N^{n+1}\|_{2,x}^2\big]\leq (1+C{\Delta t})\mathbb{E}\big[\|\nabla {m}_N^n\|_{2,x}^2\big]+C{\Delta t}\,.
\end{equation*}
We then apply the discrete Gronwall lemma.
There exists $C=C(\|G\|_{2,2}, T)>0$ such that
for all $n=0\dots N$, 
\begin{equation*}
 \mathbb{E}[\|\nabla   {m}_N^n\|_{2,x}^2]\leq C(1+\mathbb{E}[\|\nabla m_0\|_{2,x}^2]),
\end{equation*}
and \eqref{estim_u} is proved.

We now turn to the proof of \eqref{estim_w}-\eqref{estim_nabla_v}.
We note that \eqref{Gronwall} implies in particular
\begin{multline*}
\mathbb{E}\big[|\nabla {m}_N^{n+1}\|_{2,x}^2\big] -\mathbb{E}\left[\|\nabla   {m}_N^n\|_{2,x}^2\right]
+\frac{(1-\epsilon)}{{\Delta t}}\mathbb{E}\big[\| {v}_N^n- {A}_N^n\|_{2,x}^2\big] \\
+(2\theta-1-\epsilon)\mathbb{E}\big[\|\nabla  {v}_N^n\|_{2,x}^2\big]\leq C{\Delta t}\mathbb{E}\big[\|\nabla   {m}_N^n\|_{2,x}^2\big]+C{\Delta t}\,.
\end{multline*}
By summing these inequalities for $n=0\dots N-1$, we obtain
\begin{eqnarray*}
 \mathbb{E}\left[\|\nabla m_N^{N}\|_{2,x}^2\right]-\mathbb{E}\left[\|\nabla m_N^{0}\|_{2,x}^2\right] &+&\frac{(1-\epsilon)}{{\Delta t}} \sum_{n=0}^{N-1} \mathbb{E}\big[\| {v}_N^n- {A}_N^n\|_{2,x}^2\big]\\
+(2\theta-1-\epsilon)  \sum_{n=0}^{N-1} \mathbb{E}\big[\|\nabla  {v}_N^n\|_{2,x}^2\big] &\leq&  \sum_{n=0}^{N-1}C{\Delta t}\mathbb{E}\big[\|\nabla {m}_N^n\|_{2,x}^2\big]+\sum_{n=0}^{N-1}C{\Delta t}\\[0.4cm]
&\leq& C(\|G\|_{2,2},T,m_0)\,,
\end{eqnarray*}
thanks to \eqref{estim_u}.
This implies that~:
\begin{flalign*}
\frac{(1-\epsilon)}{{\Delta t}}\sum_{n=0}^{N-1}\mathbb{E}\big[\| {v}_N^n- & A_N^n\|_{2,x}^2\big] +(2\theta-1-\epsilon)  \sum_{n=0}^{N-1} \mathbb{E}\big[\|\nabla  {v}_N^n\|_{2,x}^2\big]& \\
&\leq C(\|G\|_{2,2},T,m_0)-\mathbb{E}\left[\|\nabla  {m}_N^N\|_{2,x}^2\right]+\mathbb{E}\left[\|\nabla  {m}_N^0\|_{2,x}^2\right]&\\
&\leq  C'(\|G\|_{2,2},T,m_0)\,.&
\end{flalign*}
Thus, \eqref{estim_w} and \eqref{estim_nabla_v} follow.
Finally, we may deduce \eqref{estim_v} from \eqref{estim_w} and \eqref{estim_A_2}.
This ends the proof of Proposition \ref{pro_estim_discretes}.
\end{proof}

We now turn to the proof of the lemma.
\begin{proof}[Proof of Lemma \ref{lem_nabla_u_v}.]
Let $0\leq n \leq N-1$. Since, by definition of the variational problem \eqref{pv}, $m_N^n(x)\cdot v_N^n(x)=0$, almost everywhere, and almost surely, it follows that for a.e. $x\in D$, a.s.
$$|  {m}_N^n(x)+ {v}_N^n(x)|=\sqrt{1+| {v}_N^n(x)|^2}\geq 1,$$
and thanks to Remark \ref{rema_H_integralenergy_dicreasing},
one has a.s.
\begin{eqnarray*}
\|\nabla   {m}_N^{n+1}\|_{2,x}^2 &=& \int_D \left|\nabla \left(\frac{  {m}_N^n+ {v}_N^n}{|  {m}_N^n+ {v}_N^n|}\right)\right|^2\dx\\
&\leq& \int_D|\nabla {m}_N^n+\nabla  {v}_N^n|^2\dx\,.
\end{eqnarray*}
Then, by expanding the right hand side of this inequality:
\begin{equation}\label{ineg}
\|\nabla   {m}_N^{n+1}\|_{2,x}^2 \leq \|\nabla {m}_N^n\|_{2,x}^2+2\big(\nabla {m}_N^n, \nabla  {v}_N^n\big)_{2,x}+\|\nabla  {v}_N^n\|_{2,x}^2\,.
\end{equation}
To find an expression of $2\big(\nabla {m}_N^n, \nabla  {v}_N^n\big)_{2,x}$, we use \eqref{pv} with the test function $$\varphi:= {v}_N^n- {A}_N^n\, \in\mathbb{W}_{N,n}\,.$$
Then, observing that for any $x\in D$
\begin{flalign*}
( {v}_N^n- {m}_N^n&\times v_N^n)(x)\cdot ( {v}_N^n- {A}_N^n)(x)&\\
-&( \mathrm{Id}- {m}_N^n\times)(m_n^n\times G\Delta W_N^n)(x) \cdot ( {v}_N^n- {A}_N^n)(x)&\\
&\hspace{2cm}= \big(( \mathrm{Id}- {m}_N^n\times)( {v}_N^n- {A}_N^n)(x)\big)\cdot \big( {v}_N^n(x)- {A}_N^n(x)\big)&\\
&\hspace{2cm}=| {v}_N^n(x)- {A}_N^n(x)|^2,&
\end{flalign*}  one has
\begin{multline}\label{rel_u_v}
2\big(\nabla {m}_N^n,\nabla  {v}_N^n\big)_{2,x}= 
-\frac{1}{{\Delta t}}\| {v}_N^n- {A}_N^n\|_{2,x}^2-2\theta\|\nabla  {v}_N^n\|_{2,x}^2+2\theta\big(\nabla  {v}_N^n,\nabla  {A}_N^n\big)_{2,x}\\
+2\big(\nabla {m}_N^n,\nabla  {A}_N^n\big)_{2,x} +\frac12 \sum_{i\in\mathbb{N}}\big( (\mathrm{Id}-  {m}_N^n\times)((m_N^n\times G_i )\times G_i), {v}_N^n- {A}_N^n\big)_{2,x}\,,
\end{multline}
which, using \eqref{ineg} and taking the expectation yields to
\begin{multline}\label{relation_G}
\mathbb{E}\big[\|\nabla {m}_N^{n+1}\|_{2,x}^2\big] +\frac{1}{{\Delta t}}\mathbb{E}\big[\| {v}_N^n- {A}_N^n\|_{2,x}^2\big]+(2\theta-1)\mathbb{E}\left[\|\nabla  {v}_N^n\|_{2,x}^2\right] \\
\leq \mathbb{E}\big[\|\nabla {m}_N^n\|_{2,x}^2\big] +2\theta\mathbb{E}\big[\big(\nabla  {v}_N^n,\nabla {A}_N^n\big)_{2,x}\big]+2\mathbb{E}\big[\big(\nabla {m}_N^n,\nabla  {A}_N^n\big)_{2,x}\big]\\
 +\frac{1}{2}\mathbb{E}\big[\sum_{i\in\mathbb{N}} \big( (\mathrm{Id}-  {m}_N^n\times)((m_N^n\times G_i )\times G_i),  {v}_N^n- {A}_N^n\big)_{2,x}\big]\,,
\end{multline}
all terms on the left hand side being non negative, due to $\theta \in (\frac12,1]$.

Now, since ${m}_N^n$ and ${G\Delta{W}_N^n}$ are independent, and $\mathbb{E}[G{\Delta{W}_N^n}]=0$,
we have
\begin{equation}
\label{nablaMA}
\mathbb{E}\left[\big(\nabla   {m}_N^n,\nabla  {A}_N^n\big)_{2,x}\right]=0\,.
\end{equation}
Moreover, by \eqref{nota_AN} and the Sobolev embedding $H^2(D)^3\subset L^{\infty}(D)^3$,
\begin{eqnarray}
\mathbb{E} \left[\|\nabla A_N^n\|_{2,x}^2\right] &\leq& 2\left(\mathbb{E}\|\nabla m_N^n\|_{2,x}^2\mathbb{E}\left[\|G\Delta W_N^n \|^2_{\infty,x}\right] +\mathbb{E} \left[\|\nabla (G\Delta W_N^n)\|_{2,x}^2\right]\right)\nonumber\\
&\leq &C\Delta t \|G\|_{2,2}^2 \big(\mathbb{E}\left[\|\nabla   {m}_N^n\|_{2,x}^2\right]+1\big)\label{estim_nabla_A}.
\end{eqnarray}
Therefore
\begin{equation}
\label{nablaVA}
 \mathbb{E}\left[\big(\nabla {v}_N^n,\nabla  {A}_N^n\big)_{2,x}\right]
 \leq \frac{\epsilon}{2}\mathbb{E}\left[\|\nabla  {v}_N^n\|_{2,x}^2\right]+\frac{{C\Delta t}}{2\epsilon}\|G\|_{2,2}^2
 \big(\mathbb{E}\left[\|\nabla   {m}_N^n\|_{2,x}^2\right]+1\big)\,.
\end{equation}
Similarly, one has
\begin{multline}
\label{Immx}
 \mathbb{E}\left[\Big(\sum_{i\in\mathbb{N}}(\mathrm {Id}- {m}_N^n)\times\big((  {m}_N^n\times G_i)\times G_i)\Big), {v}_N^n- {A}_N^n\Big)_{2,x}\right]\\
\leq \frac{{\Delta t}}{2\epsilon}\|G\|_{2,2}^4 + \frac{2\epsilon}{{\Delta t}}\mathbb{E}\left[\| {v}_N^n- {A}_N^n\|_{2,x}^2\right]\,.
\end{multline}
Using \eqref{nablaMA}, \eqref{nablaVA} and \eqref{Immx} in \eqref{relation_G} gives
\begin{multline}\label{relation_G2}
\mathbb{E}\big[\|\nabla {m}_N^{n+1}\|_{2,x}^2\big] +\frac{1-\epsilon}{{\Delta t}}\mathbb{E}\big[\| {v}_N^n- {A}_N^n\|_{2,x}^2\big]+(2\theta-1-\theta\epsilon)\mathbb{E}\left[\|\nabla  {v}_N^n\|_{2,x}^2\right] \\
\leq \mathbb{E}\big[\|\nabla {m}_N^n\|_{2,x}^2\big]+\frac{{C\theta \Delta t}}{\epsilon}\|G\|_{2,2}^2 \big(\mathbb{E}\left[\|\nabla   {m}_N^n\|_{2,x}^2\right]+1\big) +\frac{{\Delta t}}{4\epsilon}\|G\|_{2,2}^4\,.
\end{multline}
Since $\theta\leq 1$, this proves the lemma.
\end{proof}

Let us fix $N\in\mathbb{N}^*$. Let $w_N^n=\frac{1}{\Delta t}({v}_N^n- {A}_N^n)$ for $n=0,...,N$.
In the following, we will also denote by ${v}_N$, $ w_N$, the piecewise constant processes (indexed by the time interval$[0,T]$), whose values on $[n{\Delta t},(n+1){\Delta t})$ are (respectively) $ {v}_N^n$, $  {w}_N^n$. The previous energy estimates can now be written in the form~:
\begin{equation}\label{estim_nabla_u_2_n}
\underset{t\in[0,T]}{\mathrm{esssup}}\hspace{0.2cm}\mathbb{E}\left[\|\nabla  m_N(t,\cdot)\|_{2,x}^2\right]\leq C\,,
\end{equation}
\begin{equation}\label{estim_w_2_n}
\mathbb{E}\left[\int_0^T\| {w_N} (t,\cdot)\|_{2,x}^2\dt\right]\leq C\,,
\end{equation}
\begin{equation}\label{estim_v_2_n}
\mathbb{E}\left[\int_0^T\| {v}_N (t,\cdot)\|_{2,x}^2 \dt\right]\leq C{\Delta t}\,,
\end{equation}
and
\begin{equation}\label{estim_nabla_v_2_n}
\mathbb{E}\left[\int_0^T\|\nabla  {v}_N (t,\cdot)\|_{2,x}^2 \dt\right]\leq C{\Delta t}.
\end{equation}
Note in addition that, since  $|m_N(t,x)|=1 $ for a.e.  $(\omega, t,x)\in \Omega\times [0,T]\times D$, one has
\begin{equation}\label{estim_u_2_n}
\mathbb{E}\left[\int_0^T\| m_N(t,\cdot)\|_{2,x}^2\dt\right]\leq C\,.
\end{equation}

\section{Tightness}
\label{sec:tightness}
The aim of this section is to show that the sequence $(m_N)_{N\in\mathbb{N}^*}$ is tight. Applying then the classical Prokhorov and Skorohod theorems (see for instance \cite{DAPRATOZABCZYCK2008}), we first get the relative compactness of the sequence of laws, and secondly we can assume the almost sure convergence to a certain limit $\bar m$, up to a change of probability space. In the sequel, we use the following notations :
for all $t\in[0,T]$, we set
 \begin{equation}\label{nota_XN}
  X_N(t):=\sum_{0\leq(n+1){\Delta t}\leq t} {m}_N^n\times(G{\Delta{W}_N^n)}=\sum_{0\leq(n+1){\Delta t}\leq t}{A}_N^n\in L^2(\Omega ;H^1(D)^3)\,,
\end{equation}
and
 \begin{equation}\label{nota_XNn}
  X_N^n:=X_N(n\Delta t)=\sum_{0\leq k\leq n-1} {m}_N^k\times(G{\Delta{W}_N^k)}\,.
\end{equation}

The process $t\mapsto X_N(t)$ is the martingale part of the semi-martingale $m_N$. It is a martingale with respect to a natural piecewise constant filtration, and corresponds to the noise induced fluctuations of the process $t\mapsto m_N(t)$. In order to get an almost sure convergence for the martingale part $X_N$, we consider the triplet $(m_N,X_N,GW)_{N\in\mathbb{N}^*}$, and show that it forms a tight sequence on a suitable space. This classical technique is used essentially to retrieve the noise term in this new probability space. This has the drawback that the new Wiener process depends on the integer $N\in\mathbb{N}^*$.
\begin{proposition}\label{pro_tension}
The sequence
$$(m_N, X_N,GW)_{N\in\mathbb{N}^*}$$ is tight in the space
$$L^2\big([0,T];L^2(D)^3\big)\times L^2\big([0,T];L^2(D)^3\big)\times\mathcal{C}\big(0,T;L^2(D)^3\big)\,.$$
\end{proposition}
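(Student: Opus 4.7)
The plan is to reduce to tightness of each marginal, since the product space is Polish. The sequence $(GW)_{N\in\mathbb{N}^*}$ is a constant sequence of laws on $\mathcal{C}([0,T];L^2(D)^3)$ and is therefore trivially tight. The substance of the proof is then to establish tightness of $(m_N)$ and $(X_N)$ in $L^2([0,T];L^2(D)^3)$. I will obtain both by a Chebyshev-inequality argument based on the Aubin-Simon compact embedding: for $\alpha\in(0,\tfrac12)$ and $s>\tfrac32$,
$$L^2\bigl(0,T;H^1(D)^3\bigr)\cap W^{\alpha,2}\bigl(0,T;H^{-s}(D)^3\bigr)\hookrightarrow\hookrightarrow L^2\bigl(0,T;L^2(D)^3\bigr),$$
so that uniform-in-$N$ second-moment bounds in both pieces suffice to conclude.

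The bound in $L^2\bigl(\Omega;L^2(0,T;H^1(D)^3)\bigr)$ follows for $m_N$ from \eqref{estim_nabla_u_2_n} together with $|m_N|=1$, and for $X_N$ from orthogonality of the martingale increments $A_N^n$ plus an $H^1$-strengthening of \eqref{estim_A_2}, namely $\mathbb{E}[\|A_N^n\|_{H^1}^2]\leq C\Delta t$. The latter is obtained from the Hilbert-Schmidt hypothesis \eqref{G_hypothesis}, the Sobolev embedding $H^2(D)\hookrightarrow L^\infty(D)$, and \eqref{estim_u}. The fractional time regularity of $X_N$ is immediate from orthogonality and \eqref{estim_A_2}:
$$\mathbb{E}\bigl[\|X_N(t)-X_N(s)\|_{2,x}^2\bigr]\leq C(|t-s|+\Delta t),$$
and integration against $|t-s|^{-1-2\alpha}$ yields a uniform Nikolskii bound for any $\alpha<\tfrac12$.

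For the fractional time regularity of $m_N$, I use the decomposition $m_N=m_0+X_N+F_N$ of \eqref{decomposition} and split $F_N$ into the drift contribution $\Delta t\sum_{k\leq n-1}w_N^k$ and the renormalization error $\sum_{k\leq n-1}R_N^k$, where $R_N^k:=m_N^{k+1}-m_N^k-v_N^k$. Thanks to $m_N^k\cdot v_N^k=0$ and $|m_N^k|=1$, the identity $|m_N^k+v_N^k|=\sqrt{1+|v_N^k|^2}$ and a Taylor expansion give the pointwise bound $|R_N^k|\leq C|v_N^k|^2$. The drift piece is controlled in $H^1\bigl(0,T;L^2(D)^3\bigr)$ in expectation, since $\partial_tG_N=w_N$ in the piecewise constant sense and $\mathbb{E}\bigl[\|w_N\|_{L^2(0,T;L^2)}^2\bigr]\leq C$ by \eqref{estim_w_2_n}, and $X_N$ was already handled.

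The main obstacle is the renormalization remainder: being quadratic in $v_N^k$, it is only bounded in $L^1(D)^3$, and a direct $L^2$-in-space estimate would lose a factor of $\Delta t$. I plan to use the embedding $L^1(D)^3\hookrightarrow H^{-s}(D)^3$ valid for $s>\tfrac32$, which yields $\|R_N^k\|_{H^{-s}}\leq C\|v_N^k\|_{2,x}^2$. Combining \eqref{estim_w} with \eqref{estim_A_2} gives $\mathbb{E}\bigl[\|v_N^k\|_{2,x}^2\bigr]\leq C\Delta t$ individually, so that $\mathbb{E}\bigl[\sum_{k:\,(k+1)\Delta t\in(s,t]}\|R_N^k\|_{H^{-s}}\bigr]\leq C(|t-s|+\Delta t)$. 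A direct computation of the Nikolskii seminorm of the cumulative renormalization part then produces a uniform bound in $W^{\alpha,2}\bigl(0,T;H^{-s}(D)^3\bigr)$ for a small $\alpha>0$. Assembling the three contributions and invoking Chebyshev's inequality on the resulting uniform moment estimate completes the proof of tightness.
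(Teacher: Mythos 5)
Your structural reductions are sound: tightness on a Polish product space follows from tightness of each marginal, the treatment of $GW$ and $X_N$ mirrors the paper (orthogonality of increments, then the Nikolskii bound $\mathbb{E}[\|X_N(t)-X_N(s)\|_{2,x}^2]\leq C(|t-s|+\Delta t)$), and the $L^2(0,T;H^1)$ bounds for both $m_N$ and $X_N$ are established exactly as in the paper. Your use of the Aubin--Lions--Simon embedding with a weaker intermediate space $H^{-s}$ in place of the paper's $B_1=B=L^2$ (Lemma~\ref{lem_flandoli_1}) is a legitimate generalization (Flandoli--Gatarek, Theorem~2.1), and the drift piece $m_0+\Delta t\sum_k w_N^k$ is handled correctly: Cauchy--Schwarz on the increment sum and \eqref{estim_w_2_n} give a second-moment Nikolskii bound of the same $C(|t-s|+\Delta t)$ form.

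The genuine gap is in the renormalization remainder. Bounding $|R_N^k|\leq C|v_N^k|^2$ pointwise and using $L^1(D)\hookrightarrow H^{-s}(D)$ gives, at best, the \emph{first}-moment estimate $\mathbb{E}\bigl[\sum_{s<(k+1)\Delta t\leq t}\|R_N^k\|_{H^{-s}}\bigr]\leq C(|t-s|+\Delta t)$. But the compactness criterion you invoke requires a \emph{second}-moment $W^{\alpha,2}(0,T;H^{-s})$ bound, i.e.\ control of
$$\mathbb{E}\Bigl[\Bigl(\sum_{s<(k+1)\Delta t\leq t}\|R_N^k\|_{H^{-s}}\Bigr)^{\!2}\Bigr]\,,$$
which after Cauchy--Schwarz on the finite sum demands $\mathbb{E}[\|v_N^k\|_{2,x}^4]\lesssim\Delta t^2$. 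This fourth-moment estimate on $v_N^k$ is not available from Proposition~\ref{pro_estim_discretes}: \eqref{estim_w} and \eqref{estim_A_2} only give second moments, and $v_N^k$ is not Gaussian, so the step ``a direct computation of the Nikolskii seminorm produces a uniform $W^{\alpha,2}$ bound'' does not go through as stated. The paper avoids precisely this obstruction by splitting the renormalization error differently: it removes $A_N^n$ (not $v_N^n$) and writes
$$m_N^{n+1}-m_N^n-A_N^n=\Bigl(\tfrac{m_N^n+A_N^n}{|m_N^n+A_N^n|}-m_N^n-A_N^n\Bigr)+\Bigl(\tfrac{m_N^n+v_N^n}{|m_N^n+v_N^n|}-\tfrac{m_N^n+A_N^n}{|m_N^n+A_N^n|}\Bigr)\,.$$
The first term is quadratic only in the Gaussian quantity $A_N^n$, whose fourth moment \eqref{estim_A_4} indeed scales like $\Delta t^2$; the second term is controlled \emph{linearly} in $|v_N^n-A_N^n|$ by the $1$-Lipschitz property of $x\mapsto x/|x|$ for $|x|\geq 1$, so the readily available second-moment estimate \eqref{estim_w} suffices. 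This keeps the whole argument in $L^2(D)$ and obviates the $H^{-s}$-valued detour entirely. To repair your proof you would either need to establish the missing fourth moments of $v_N^k$, or replace $W^{\alpha,2}(0,T;H^{-s})$ by a space for which first moments suffice in the Chebyshev step, together with a version of the compactness lemma adapted to that space; the simpler fix is to adopt the paper's split of the renormalization error.
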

The following result whose proof can be found e.g.\ in \cite{FLANDOLI_GATAREK1995}  will be needed for the proof of Proposition \ref{pro_tension}, which will be done later on.
\begin{lemma}\label{lem_flandoli_1}
 Let $\displaystyle B_0\subseteq B$ be two reflexive Banach spaces such that 
 $B_0$ is compactly embedded in $B$.
Let $\alpha>0$.
Then the embedding $$L^2(0,T;B_0)\cap H^\alpha(0,T;B)\hookrightarrow L^2(0,T;B)$$ is compact.
\end{lemma}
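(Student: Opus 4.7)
The strategy is to prove the compact embedding via the Fourier characterisation of $H^\alpha(0,T;B)$, using the compactness of $B_0\hookrightarrow B$ to gain strong convergence of low-frequency modes and the $H^\alpha$ regularity to absorb high-frequency tails. Take a bounded sequence $(u_n)$ in $L^2(0,T;B_0)\cap H^\alpha(0,T;B)$. Since both spaces are reflexive, Banach--Alaoglu gives, up to a subsequence, a weak limit $u$ in $L^2(0,T;B_0)\cap H^\alpha(0,T;B)$; setting $\phi_n:=u_n-u$, the sequence $(\phi_n)$ is still bounded in both spaces and $\phi_n\rightharpoonup 0$. The claim reduces to showing $\phi_n\to 0$ strongly in $L^2(0,T;B)$.

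The key step is to extend each $\phi_n$ to a function $\tilde\phi_n$ defined on all of $\mathbb{R}$, preserving (up to constants independent of $n$) the $L^2(\mathbb{R};B_0)$ and $H^\alpha(\mathbb{R};B)$ norms, and preserving weak convergence to zero. Writing $\hat\phi_n$ for the Fourier transform in time of $\tilde\phi_n$, Plancherel's identity gives
\begin{equation*}
\|\tilde\phi_n\|_{L^2(\mathbb{R};B)}^2=\int_{\mathbb{R}}\|\hat\phi_n(\tau)\|_B^2\,d\tau.
\end{equation*}
For any $M>0$, the high-frequency part satisfies
\begin{equation*}
\int_{|\tau|>M}\|\hat\phi_n(\tau)\|_B^2\,d\tau\leq M^{-2\alpha}\int_{\mathbb{R}}|\tau|^{2\alpha}\|\hat\phi_n(\tau)\|_B^2\,d\tau\leq C M^{-2\alpha},
\end{equation*}
uniformly in $n$, by the Fourier characterisation of the $H^\alpha$-norm. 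For the low-frequency part, fix $\tau\in[-M,M]$. Testing against $e^{-i\tau\cdot}f$ with $f\in B_0'$ shows that $\hat\phi_n(\tau)\rightharpoonup 0$ weakly in $B_0$; moreover $\|\hat\phi_n(\tau)\|_{B_0}\leq \|\tilde\phi_n\|_{L^1(\mathbb{R};B_0)}\leq C$ uniformly in $n$ and $\tau$ (by Cauchy--Schwarz and the compact support of the extended function, after localising the extension). By the compactness of $B_0\hookrightarrow B$, weak convergence in $B_0$ upgrades to strong convergence in $B$, so $\hat\phi_n(\tau)\to 0$ in $B$ for each $\tau$. Dominated convergence (with dominating constant $C$) gives $\int_{|\tau|\leq M}\|\hat\phi_n(\tau)\|_B^2\,d\tau\to 0$ as $n\to\infty$. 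Combining and letting first $n\to\infty$ then $M\to\infty$ yields $\tilde\phi_n\to 0$ in $L^2(\mathbb{R};B)$, hence $\phi_n\to 0$ in $L^2(0,T;B)$.

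The main obstacle I expect is the construction of the extension operator needed in the second step. For $\alpha\in(0,1/2)$ one may simply extend by zero: the Gagliardo seminorm characterisation of $H^\alpha$ shows this preserves the norm (up to a constant), and no boundary compatibility is needed. For $\alpha\geq 1/2$ the zero extension creates spurious jumps, so one must use a Stein-type reflection extension; checking that the standard scalar construction transfers to the Bochner setting with values in a general Banach space $B$, and that it is continuous simultaneously from $L^2(0,T;B_0)$ to $L^2(\mathbb{R};B_0)$ and from $H^\alpha(0,T;B)$ to $H^\alpha(\mathbb{R};B)$, is the most technical point. An alternative that sidesteps this issue is to notice that $H^\alpha(0,T;B)\hookrightarrow H^\beta(0,T;B)$ continuously for any $\beta\in(0,\min(\alpha,1/2))$, reducing to the easy sub-critical case at the cost of a slightly worse rate in $M$, which is harmless for the qualitative compactness statement.
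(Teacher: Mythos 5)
The paper does not actually prove this lemma: it is quoted from \cite{FLANDOLI_GATAREK1995}, so there is no internal proof to compare against. Your argument is the classical Lions--Temam Fourier-transform proof of the fractional compactness lemma, and its architecture (extraction of a weak limit, extension to $\mathbb{R}$, low/high frequency splitting, compactness of $B_0\hookrightarrow B$ on the low modes, the $H^\alpha$ bound on the tail) is the right one; your handling of the extension operator, via the reduction to an exponent $\beta\in(0,\min(\alpha,1/2))$ and extension by zero, is also an acceptable way to sidestep the Stein extension.

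The genuine gap is the use of Plancherel's identity, and of the Fourier-multiplier characterisation of $H^\alpha$, for \emph{Banach-space-valued} functions. The identity $\|\tilde\phi\|_{L^2(\mathbb{R};B)}^2=\int_{\mathbb{R}}\|\hat\phi(\tau)\|_B^2\,\mathrm{d}\tau$, and the equivalence of the Gagliardo seminorm of $H^\alpha(\mathbb{R};B)$ with $\int_{\mathbb{R}}|\tau|^{2\alpha}\|\hat\phi(\tau)\|_B^2\,\mathrm{d}\tau$, hold only when $B$ is (isomorphic to) a Hilbert space: by Kwapie\'n's theorem, $L^2$-boundedness of the vector-valued Fourier transform characterises Hilbert spaces among Banach spaces. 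Since the lemma is stated for reflexive Banach spaces, your proof does not establish it in that generality; it does prove it when $B_0$ and $B$ are Hilbert spaces, which is all the paper uses ($B_0=H^1(D)^3$, $B=L^2(D)^3$), so the gap is harmless for the application but real for the statement. The proof in \cite{FLANDOLI_GATAREK1995} avoids the Fourier transform altogether: the fractional norm is taken in Gagliardo (difference-quotient) form, and compactness in $L^2(0,T;B)$ is obtained from the $L^2(0,T;B_0)$ bound together with the uniform smallness of time translates in $B$ that the fractional seminorm provides, via the compactness criterion for $L^p(0,T;B)$ of Simon (equivalently, an Ascoli-type argument combined with Ehrling's inequality). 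If you wish to keep the Fourier route, you should either restrict the statement to Hilbert spaces or replace the frequency splitting by this translation estimate.
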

We shall apply this lemma with $B=L^2(D)^3$, and $B_0=H^1(D)^3$. Therefore, in order to deduce the tightness, we need uniform $H^\alpha(0,T;L^2(D)^3)$ estimates on $m_N$ and $X_N$ for some $\alpha>0$. These estimates are stated in the following proposition.
\begin{proposition}\label{pro_H_alpha}
For any $\alpha\in (0,\frac{1}{2})$,
there exists a constant $C=C(\|G\|_{2,2},T,$ $\alpha)$ such that
\begin{equation}\label{estimate1}
\mathbb{E}\left[\| m_N\|_{H^\alpha(0,T;L^2( D)^3)}^2\right]\leq C,
\end{equation}
\begin{equation}\label{estimate11}
\mathbb{E}\left[\| X_N\|_{H^\alpha(0,T;L^2( D)^3)}^2\right]\leq C.
\end{equation}
\end{proposition}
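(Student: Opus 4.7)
For $\alpha\in(0,\tfrac12)$ I would use the Slobodeckij characterization
\[
\|f\|_{H^\alpha(0,T;L^2)}^2 \simeq \|f\|_{L^2(0,T;L^2)}^2 + \int_0^T\!\!\int_0^T \frac{\|f(t)-f(s)\|_{2,x}^2}{|t-s|^{1+2\alpha}}\,\ds\dt,
\]
and reduce the proposition to the uniform time-increment bound
\[
\mathbb{E}\|m_N(t)-m_N(s)\|_{2,x}^2 + \mathbb{E}\|X_N(t)-X_N(s)\|_{2,x}^2 \leq C\bigl(|t-s|+\Delta t\bigr),\quad s,t\in[0,T], \quad (\ast)
\]
with $C$ independent of $N$. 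Once $(\ast)$ is established, Fubini and a direct computation bound the Slobodeckij integral by $C_T+C_T(\Delta t)^{1-2\alpha}$, uniformly in $N$ for any $\alpha<\tfrac12$: the $|t-s|$ contribution gives a convergent $\int\!\!\int|t-s|^{-2\alpha}$, while the $\Delta t$ contribution from pairs of adjacent intervals $(I_n,I_{n-1})$ amounts to at most $N$ times $(\Delta t)^{2-2\alpha}=T(\Delta t)^{1-2\alpha}$. The $L^2(0,T;L^2)$ piece of $\|m_N\|_{H^\alpha}$ is trivial since $|m_N|\equiv1$, and for $X_N$ it follows from $(\ast)$ at $s=0$.

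\medskip\noindent\textbf{Martingale part.} Since $m_N^j$ is $\mathbb{F}_N^j$-measurable while $G\Delta W_N^j$ is centered and independent of $\mathbb{F}_N^j$, the sequence $(A_N^j)_j$ forms a martingale difference with respect to $(\mathbb{F}_N^n)$. For $s\in I_k$, $t\in I_n$ with $k\leq n$, orthogonality combined with \eqref{estim_A_2} gives
\[
\mathbb{E}\|X_N(t)-X_N(s)\|_{2,x}^2 = \sum_{j=k}^{n-1}\mathbb{E}\|A_N^j\|_{2,x}^2 \leq (n-k)\Delta t\,\|G\|_{2,0}^2 \leq C(|t-s|+\Delta t).
\]

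\medskip\noindent\textbf{Process $m_N$ and main obstacle.} I write $m_N(t)-m_N(s)=(X_N(t)-X_N(s))+\sum_{j=k}^{n-1}\delta_j$ with $\delta_j:=m_N^{j+1}-m_N^j-A_N^j$. The identity $|m_N^j+v_N^j|=\sqrt{1+|v_N^j|^2}$ (valid pointwise, since $v_N^j\cdot m_N^j=0$ a.e.) combined with $v_N^j=A_N^j+\Delta t\,w_N^j$ yields $\delta_j=\Delta t\,w_N^j-\alpha_j$, where $\alpha_j:=m_N^{j+1}(\sqrt{1+|v_N^j|^2}-1)$ satisfies the pointwise bound $|\alpha_j|\leq|v_N^j|^2/2$. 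The linear contribution is immediate from Cauchy--Schwarz and \eqref{estim_w_2_n}:
\[
\mathbb{E}\Big\|\Delta t\!\!\sum_{j=k}^{n-1}w_N^j\Big\|_{2,x}^2 \leq \Delta t^2(n-k)\,\mathbb{E}\!\!\sum_{j=0}^{N-1}\|w_N^j\|_{2,x}^2 \leq C(|t-s|+\Delta t).
\]
For the renormalization error $\sum_j\alpha_j$, the plan is to establish the per-step bound $\mathbb{E}\|\alpha_j\|_{2,x}^2\leq C(\Delta t)^2$; Minkowski in $L^2(\Omega;L^2)$ then gives $(\mathbb{E}\|\sum_j\alpha_j\|_{2,x}^2)^{1/2}\leq C(n-k)\Delta t\leq C|t-s|$, whence $\mathbb{E}\|\sum_j\alpha_j\|_{2,x}^2\leq CT|t-s|$. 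Using $|\alpha_j|\leq|v_N^j|^2/2$, the per-step bound reduces to $\mathbb{E}\|v_N^j\|_{4,x}^4\leq C(\Delta t)^2$. This last inequality is the delicate point: it is not a direct consequence of Proposition \ref{pro_estim_discretes}, but should follow from the $L^2$-per-step estimate $\mathbb{E}\|v_N^n\|_{2,x}^2\leq C\Delta t$ (obtained by testing \eqref{pv} against $\varphi=v_N^n\in\mathbb{W}_{N,n}$ and using \eqref{estim_u} together with Young's inequality), combined with a fourth-moment analogue of Proposition \ref{pro_estim_discretes} (derived by iterating Lemma \ref{lem_nabla_u_v} with $\|\nabla m_N^n\|^4$ in place of the squared norm) and the 3D Sobolev/Gagliardo--Nirenberg embeddings applied to the decomposition $v_N^j=A_N^j+\Delta t\,w_N^j$. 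This quadratic renormalization term is the only step that is not entirely routine; everything else reduces to Proposition \ref{pro_estim_discretes} and elementary manipulations with the Slobodeckij integral.
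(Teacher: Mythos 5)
Your overall architecture (Slobodeckij $\leftrightarrow$ increment bound of order $|t-s|+\Delta t$, martingale orthogonality for $X_N$, treating the renormalization correction separately) matches the paper's, but the key decomposition of the renormalization error is different, and your version has a genuine gap.

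You collapse the correction into a single term $\alpha_j = m_N^{j+1}(\sqrt{1+|v_N^j|^2}-1)$ with $|\alpha_j|\leq \tfrac12|v_N^j|^2$, and then need $\mathbb{E}\|v_N^j\|_{4,x}^4\leq C(\Delta t)^2$ \emph{per step}. This estimate is not accessible from the paper's energy bounds, and your own sketch of a route to it does not work. Testing \eqref{pv} against $\varphi=v_N^n$ gives $\mathbb{E}\|v_N^n\|_{2,x}^2\lesssim\Delta t$ and $\mathbb{E}\|\nabla v_N^n\|_{2,x}^2\lesssim 1$ per step, so in 3D the Sobolev/Gagliardo--Nirenberg route gives at best $\mathbb{E}\|v_N^n\|_{4,x}^2\lesssim 1$, far from $\mathbb{E}\|v_N^n\|_{4,x}^4\lesssim(\Delta t)^2$. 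The obstruction is intrinsic: $v_N^j = A_N^j+\Delta t\,w_N^j$, and $\Delta t\,w_N^j$ is the elliptic part of the increment, roughly $\Delta t\,\Delta m_N^j$, whose $L^4$ norm is not small; the bounds \eqref{estim_w}--\eqref{estim_nabla_v} only control $\sum_j\|w_N^j\|_{2,x}^2$, not $\|w_N^j\|_{4,x}$, and not per step. A ``fourth-moment analogue of Lemma \ref{lem_nabla_u_v}'' would still only give summed bounds on $\|\nabla m_N^n\|_{2,x}$, and would not produce the pointwise-in-$n$ smallness in $L^4$ you need.

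The paper sidesteps this precisely by \emph{not} applying the quadratic estimate $\sqrt{1+|V|^2}-1\leq\tfrac12|V|^2$ to $V=v_N^j$. Instead it inserts the intermediate point $\frac{m_N^j+A_N^j}{|m_N^j+A_N^j|}$ and writes
\begin{equation*}
m_N^{j+1}-m_N^j-A_N^j=\Big(\tfrac{m_N^j+A_N^j}{|m_N^j+A_N^j|}-m_N^j-A_N^j\Big)+\Big(\tfrac{m_N^j+v_N^j}{|m_N^j+v_N^j|}-\tfrac{m_N^j+A_N^j}{|m_N^j+A_N^j|}\Big)\,.
\end{equation*}
For the first bracket, the quadratic bound is applied with $V=A_N^j$ only, and $\mathbb{E}\|A_N^j\|_{4,x}^4\leq C(\Delta t)^2$ is immediate from the Gaussian moment estimate \eqref{estim_A_4}. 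For the second bracket, one uses that $x\mapsto x/|x|$ is $1$-Lipschitz on $\{|x|\geq1\}$, so it is bounded by $|v_N^j-A_N^j|=\Delta t\,|w_N^j|$, which is controlled \emph{linearly} and in $L^2$ by \eqref{estim_w} after Cauchy--Schwarz on the sum. This split is the step your argument is missing; with it, the rest of your proposal (martingale part, linear $w_N$ contribution, Slobodeckij integral estimate) goes through and coincides with the paper's proof.
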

\begin{proof}[Proof of Proposition \ref{pro_H_alpha}.]
We have to evaluate the following quantities for $\alpha\in(0,\frac{1}{2})$~:
$$\iint_{[0,T]^2} \frac{\mathbb{E}\big[ \| m_N(t)- m_N(s)\|_{2,x}^2\big]}{|t-s|^{1+2\alpha}}\dt\ds\,.$$
and
$$\iint_{[0,T]^2} \frac{\mathbb{E}\big[ \| X_N(t)- X_N(s)\|_{2,x}^2\big]}{|t-s|^{1+2\alpha}}\dt\ds\,.$$
Notice that these integrals measure the regularity in time of the two processes $t\mapsto m_N(t)$ and $t\mapsto X_N(t)$. Since $X_N$ is expected to be the martingale part in the canonical decomposition of the semi-martingale $m_N$, one expects $m_N$ to be at least as regular as $X_N$. In the sequel, we take 
$t,s\in[0,T]$, and assume without loss of generality that $t>s$.
Thus, we first evaluate the following quantity~:
\begin{equation}\label{increments_X_N}
\mathbb{E}\Big[\| X_N(t)- X_N(s)\|_{2,x}^2\Big]=\mathbb{E}\Big[\Big\|\sum_{s<(n+1){\Delta t}\leq t}{A}_N^n\Big\|_{2,x}^2\Big]\,.
\end{equation}
Observe that for $n\neq m$, the random variables
$G\Delta W_N^n$, $G\Delta W_N^m$ are independent, with zero mean. Using also the fact that ${m}_N^n$ is independent of $G\Delta W_N^k$ for $1\leq n\leq k\leq N$, 
Fubini's theorem, and the identity $a\cdot(b\times c)=a\times b\cdot c$, for  $a,b,c \in\mathbb{R}^3$, one has for $m>n$,
\begin{flalign}
\nonumber\mathbb{E}\big[\big({A}_N^n&,{A}_N^m\big)_{2,x}\big]&\\
&=\int_D\mathbb{E}\left[\big({m}_N^n(x)\times G\Delta W_N^n(x)\big)\cdot\big({m}_N^m(x)\times G\Delta W_N^m(x)\big)\right]\dx&\\
\nonumber&=\int_D\mathbb{E}\left[\big(\big({m}_N^n(x)\times G\Delta W_N^n(x)\big)\times{m}_N^m(x) \big)\right]\cdot \mathbb{E}\left[G\Delta W_N^m(x)\right]\dx&\\
\label{estim_ind_increments}&=0\,.&
\end{flalign}
Developing the sum \eqref{increments_X_N} and using \eqref{estim_A_2}, one has
\begin{eqnarray*}
\mathbb{E}\Big[\| X_N(t)- X_N(s)\|_{2,x}^2\Big]&=&\mathbb{E}\Big[\sum_{s<(n+1){\Delta t}\leq t}\|{A}_N^n\|_{2,x}^2\Big]\\
&&+2\mathbb{E}\Bigg[\sum_{\substack {s<(n+1){\Delta t}\leq t\\s<(m+1){\Delta t}\leq t\\n<m}}\big({A}_N^n,{A}_N^m\big)_{2,x}\Bigg]\\
&=&\mathbb{E}\Big[\sum_{s<(n+1){\Delta t}\leq t}\|{A}_N^n\|_{2,x}^2\Big]\\
&\leq& C\|G\|_{2,0}^2\Big(\sum_{s<(n+1){\Delta t}\leq t}{\Delta t}\Big)\,.
\end{eqnarray*}
We observe that the number of terms in the sum above is bounded by $\dfrac{t-s}{\Delta t}+1$, and deduce that for all $0\leq s\leq t\leq T$,
\begin{eqnarray}\label{increments_X_N_2}
 \mathbb{E}\Big[\| X_N(t)- X_N(s)\|_{2,x}^2\Big]\leq C\|G\|_{2,0}^2(|t-s|+\Delta t)\,.
\end{eqnarray}
Now, remark that \eqref{increments_X_N_2} implies the uniform estimate of the $H^\alpha$ norm. Indeed,
 since $X_N$ is a piecewise constant function, the integrand $\mathbb{E}\big[\| X_N(t,\cdot)- X_N(s,\cdot)\|_{2,x}^2\big]$ vanishes for $(t,s)\in[n\Delta t,(n+1)\Delta t)^2$, for $0\leq n\leq N-1$. Using moreover \eqref{increments_X_N_2}, there exists a constant $C=C(\|G\|_{2,0},T)$ such that
\begin{flalign*}
\nonumber&\iint_{[0,T]^2}\frac{\mathbb{E} [\|X_N(t)-X_N(s)\|_{2,x}^2]}{|t-s|^{1+2\alpha}}\dt\ds&\\
&\hspace{0.2cm}\nonumber\leq C\iint_{[0,T]^2}\frac{\dt\ds}{|t-s|^{2\alpha}}
+C\Delta t \sum_{\substack{0\leq m,n\leq N-1\\|n-m|\geq 1}}\int_{n\Delta t}^{(n+1)\Delta t}\int_{m\Delta t}^{(m+1)\Delta t}\frac{\dt\ds}{|t-s|^{1+2\alpha}}&\\
&\hspace{0.2cm} =A+B \,.&
\end{flalign*}
Since 
\begin{equation*}
\bigcup\limits_{\substack{0\leq n,m\leq N-1\\|n-m|\geq2}}[n\Delta t,(n+1)\Delta t[\times[m\Delta t,(m+1)\Delta t[\subseteq\{(t,s)\in[0,T]^2,|t-s|>\Delta t\}\,,
\end{equation*}
we remark that
\begin{multline*}
 B \leq \sum_{\substack{0\leq m,n\leq N-1\\|n-m|= 1}}\int_{n\Delta t}^{(n+1)\Delta t}\int_{m\Delta t}^{(m+1)\Delta t}\frac{ C\Delta t\dt\ds}{|t-s|^{1+2\alpha}}+\iint\limits_{\substack{[0,T]^2\\|t-s|>\Delta t}}\frac{C\Delta t}{|t-s|^{1+2\alpha}}\dt\ds\,.
\end{multline*}
Finally, we get 
\begin{multline}
\label{integ1}
\iint_{[0,T]^2}\frac{\mathbb{E} [\|X_N(t)-X_N(s)\|_{2,x}^2]}{|t-s|^{1+2\alpha}}\dt\ds\\
\leq 2C \iint_{[0,T]^2}\frac{\dt\ds}{|t-s|^{2\alpha}}+2C\Delta t\sum_{n=0}^{N-2}\int_{n\Delta t}^{(n+1)\Delta t}\int_{(n+1)\Delta t}^{(n+2)\Delta t}\frac{\dt\ds}{|t-s|^{1+2\alpha}}\,.
\end{multline}

The first term of the right hand side of \eqref{integ1} is bounded because $\alpha\in(0,\frac{1}{2})$.
Then, it is easy to show that
\begin{equation}\label{ineg2}
\sum_{n=0}^{N-2}\int_{n\Delta t}^{(n+1)\Delta t}\int_{(n+1)\Delta t}^{(n+2)\Delta t}\frac{\dt\ds}{|t-s|^{1+2\alpha}}=O(\Delta t^{-2\alpha})\,,
\end{equation}
and \eqref{estimate11} is proved.

We now turn to \eqref{estimate1}. Note that since we have already estimated $X_N$, it remains only to consider
$m_N-X_N$.
Using the definition of $m_N$ and $X_N$ together with \eqref{pv_renorm}, we write~:
\begin{flalign*}
 (m_N(t)-X_N(t))-&(m_N(s)-X_N(s))&\\
 =&\sum_{s<(n+1){\Delta t}\leq t}\Big({m}_N^{n+1}- {m}_N^n-{A}_N^n\Big)&\\
=&\sum_{s<(n+1){\Delta t}\leq t}\Big(\frac{{m}_N^{n}+{A}_N^n}{|{m}_N^{n}+{A}_N^n|}- {m}_N^n-{A}_N^n\Big)&\\
&+\sum_{s<(n+1){\Delta t}\leq t}\left(\frac{{m}_N^{n}+{v}_N^n}{|{m}_N^{n}+{v}_N^n|}-\frac{{m}_N^{n}+{A}_N^n}{|{m}_N^{n}+{A}_N^n|}\right)\,.&
\end{flalign*}
Then, taking the $L^2(D)^3$ norm, and the expectation, we get
\begin{flalign}
\mathbb{E}\big[\| m_N(t)-X_N(t)-& (m_N(s)-X_N(s))\|_{2,x}^2\big] &\\
\leq&  2\mathbb{E}\big[\big\|\sum_{s<(n+1){\Delta t}\leq t} \frac{{m}_N^{n}+{A}_N^n}{|{m}_N^{n}+{A}_N^n|} -{m}_N^n-{A}_N^n\big\|_{2,x}^2\big]&\nonumber\\
&+2\mathbb{E}\big[\big\|\sum_{s<(n+1){\Delta t}\leq t}\frac{{m}_N^{n}+{v}_N^n}{|{m}_N^{n}+{v}_N^n|}-\frac{{m}_N^{n}+{A}_N^n}{|{m}_N^{n}+{A}_N^n|}\big\|_{2,x}^2\big]\,.&\label{increments_m_N}
\end{flalign}

For the first term in the right hand side of \eqref{increments_m_N}, observe that for any ${m},{V}\in\mathbb{R}^3$, s.t.\ ${V}\perp{m}$ and $|{m}|=1$, one has~:
\begin{equation}\label{ineg_accroissements}
 \left| \frac{{m}+{V}}{|{m}+{V}|}- {m}-{V}\right|\leq \sqrt{1+|V|^2}-1\leq \frac{1}{2}|{V}|^2\,.
 \end{equation} 

Using Cauchy-Schwarz inequality, and \eqref{ineg_accroissements} on each term of the sum (remember that ${A}_N^n\perp{m}_N^n$, see \eqref{nota_AN}), one has~:
\begin{flalign*}
 E\Big[\Big\|\sum_{s<(n+1){\Delta t}\leq t} &\frac{{m}_N^{n}+{A}_N^n}{|{m}_N^{n}+{A}_N^n|} -{m}_N^n-{A}_N^n\Big\|_{2,x}^2\Big]&\\
\leq&\Big(\frac{t-s}{\Delta t}+1\Big)\sum_{s<(n+1){\Delta t}\leq t}\mathbb{E}\Big[\Big\|\frac{{m}_N^{n}+{A}_N^n}{|{m}_N^{n}+{A}_N^n|} -{m}_N^n-{A}_N^n\Big\|_{2,x}^2\Big]&\\
 \leq&\frac{1}{4}\Big(\frac{t-s}{\Delta t}+1\Big)\sum_{s<(n+1){\Delta t}\leq t}\mathbb{E}\Big[\Big\|{A}_N^n\Big\|_{4,x}^4\Big]\,.&
\end{flalign*}
Then we have by \eqref{estim_A_4}~:
\begin{align}\label{maj1}
E\Big[\Big\|\sum_{s<(n+1){\Delta t}\leq t} \frac{{m}_N^{n}+{A}_N^n}{|{m}_N^{n}+{A}_N^n|} -{m}_N^n-{A}_N^n\Big\|_{2,x}^2\Big]
&\leq C(\|G\|_{2,1},T)(|t-s|+\Delta t)^2\,.
\end{align}

Similarly, for the second term in \eqref{maj1}, we use the fact that the map $\displaystyle x\mapsto\frac{x}{|x|}$, is $1$-Lipschitz for $|x|\geq1$, together with Cauchy-Schwarz inequality and \eqref{estim_w}. Then
\begin{eqnarray}
& &\nonumber \mathbb{E}\Big[\Big\|\sum_{s<(n+1){\Delta t}\leq t}\frac{{m}_N^{n}+{v}_N^n}{|{m}_N^{n}+{v}_N^n|}-\frac{{m}_N^{n}+{A}_N^n}{|{m}_N^{n}+{A}_N^n|}\Big\|_{2,x}^2\Big]\\
\label{maj2}&& \hspace{1cm}\leq\Big(\frac{t-s}{\Delta t}+1\Big)\mathbb{E}\Big[\sum_{s<(n+1){\Delta t}\leq t}\Big\|{v}_N^n-{A}_N^n\Big\|_{2,x}^2\Big]\\
\nonumber&& \hspace{1cm}\leq C(\|G\|_{2,2},T)(|t-s|+\Delta t)\,.
\end{eqnarray}

Using \eqref{maj1} and \eqref{maj2} in \eqref{increments_m_N}, together with \eqref{estimate11}, we conclude that there exists a constant $C=C(\|G\|_{2,2},T)$, independent of $N\in\mathbb{N}^*$, such that
\begin{equation*}
 \mathbb{E}\left[\| m_N(t,\cdot)- m_N(s,\cdot)\|_{2,x}^2\right]\leq C(|t-s|+\Delta t)\,.
\end{equation*}
We have proved the same inequality as for the process $X_N$ (see \eqref{increments_X_N_2}), thus the conclusion follows in the same way as before.
\end{proof}
We now turn to the proof of Proposition \ref{pro_tension}.
\begin{proof}[Proof of Proposition \ref{pro_tension}]
Remark that thanks to \eqref{estim_u_2_n} and \eqref{estim_nabla_u_2_n},\linebreak $(m_N)_{N\in\mathbb{N}^*}$ is bounded in $L^2(\Omega\times[0,T];H^1(D)^3)$. Let us prove the same for the process $X_N$.
One has~:
\begin{eqnarray*}
 \mathbb{E}\Big[\| X_N\|_{L^2(0,T;L^2(D)^3)}^2\Big]&=&\mathbb{E}\Big[\sum_{n=0}^{N-1}\|{X}_N^n\|_{2,x}^2{\Delta t}\Big]\\
&=&\sum_{n=0}^{N-1}{\Delta t}\,\mathbb{E}\Big[\Big\|\sum_{k\leq n }{A}_N^k\Big\|_{2,x}^2\Big]\\
&=&\sum_{n=0}^{N-1}{\Delta t}\Big(\mathbb{E}\Big[\sum_{k\leq n}\Big\|{A}_N^k\Big\|_{2,x}^2\Big]\\
&&+2\mathbb{E}\Big[\sum_{0\leq k<l\leq n}({A}_N^k,{A}_N^l)_{2,x}\Big]\Big)\,.
\end{eqnarray*}
As before, the second term vanishes (see \eqref{estim_ind_increments}), while the first term is bounded by $C(\|G\|_{2,0},T)$ thanks to \eqref{estim_A_2}.

Similarly, for $k\neq l$, we have
$\mathbb{E}[(\nabla {A}_N^k,\nabla {A}_N^l)_{2,x}]=0\,.$
Moreover, using \eqref{estim_nabla_A}, we get
\begin{eqnarray*}
 \mathbb{E}[\|\nabla X_N\|_{2,x}^2]&=&\sum_{n=0}^{N-1}{\Delta t}\sum_{k\leq n}\mathbb{E}\Big[\|\nabla {A}_N^k\|_{2,x}^2\Big]\\
&\leq& C\|G\|_{2,2}^2\sum_{n=0}^{N-1}{\Delta t}\sum_{k\leq n}{\Delta t}\\
&\leq& C'(\|G\|_{2,2},T)\,.
\end{eqnarray*}
Therefore, there exists a constant $C=C(\|G\|_{2,2},T)>0$ such that 
 $$\mathbb{E}\Big[\| X_N\|_{ L^2(0,T;H^1(D)^3)}^2\Big]\leq C\,.$$

The tightness of the sequence $(m_N,X_N,W)$ is now obtained in a classical way.
Let $R>0$, and fix $\alpha\in(0,\frac{1}{2})$.
We consider the product space
$$E:=L^2\big(0,T; L^2(D)^3\big)\times L^2\big(0,T; L^2(D)^3)\big)\times \mathcal{C}\big([0,T];L^2(D)^3\big)\,,$$ endowed with its classical product norm.
Thanks to lemma \ref{lem_flandoli_1}, and a standard Ascoli compactness theorem, the space 
\begin{multline*}
F:=L^2\big(0,T;H^1(D)^3\big)\times L^2\big(0,T;H^1(D)^3\big)\times\mathcal{C}\big([0,T];H^1(D)^3\big)\\
 \bigcap H^\alpha\big(0,T;L^2(D)^3\big)\times H^\alpha\big(0,T;L^2(D)^3\big)\times\mathcal{C}^\alpha\big([0,T];L^2(D)^3\big)
\end{multline*}
is compactly embedded in $E$.
Using Markov inequality, one has
\begin{flalign}\nonumber
\mathbb{P}&\Big(( m_N, X_N, GW)\notin B_F(0,R)\Big)&\\
&\label{Markov}\leq \frac{1}{R^2}\Big(\mathbb{E}\left[\|m_N\|_{ L^2(0,T;H^1)}^2\right]+\mathbb{E}\left[\|m_N\|_{H^\alpha([0,T];L^2)}^2\right]& \nonumber\\
&+\mathbb{E}\left[\|X_N\|_{ L^2(0,T;H^1)}^2\right]+\mathbb{E}\left[\|X_N\|_{ H^\alpha([0,T];L^2)}^2\right]+\mathbb{E}\left[\|GW\|_{\mathcal{C}^\alpha([0,T];H^1)}^2\right]\Big)\,.&
\end{flalign}
Then, using the bounds \eqref{estimate1}, and \eqref{estimate11}, \eqref{G_hypothesis}, the classical properties of a $GG^*$-Wiener process and also \eqref{estim_u}, the right hand side of \eqref{Markov} tends to $0$ as $R\to\infty$ uniformly in $N\in\mathbb{N}^*$.
Since the sets $B_F(0,R)$ are precompacts in $E$, the sequence $(m_N,X_N,GW)_{N\in\mathbb{N}^*}$ is tight in $E$, and the proposition is proved.
\end{proof}
A simple application of Prokhorov and Skorohod theorem leads to the following corollary~:
\begin{corollary}\label{cor_skorohod}
There exists a new probability space 
$(\bar\Omega,\mathcal{\bar F},\mathbb{\bar P})\,,$
a sequence of random variables on this space
$(\bar m_N, \bar X_N, G\bar W_N)_{N\in\mathbb{N}^*}$
 taking its values in the space $L^2(0,T;L^2(D)^3)\times L^2(0,T;L^2(D)^3)\times \mathcal{C}(0,T;L^2(D)^3)$, with the same laws, for each $N\in\mathbb{N}^*$, as $( m_N, X_N, GW)$,
and a triplet $({\bar m},{\bar X},G\bar W)$ of r.v.\ in $L^2(0,T;L^2(D)^3)\times L^2(0,T;L^2(D)^3)\times \mathcal{C}(0,T;L^2(D)^3)\,,$
so that up to a subsequence,
\begin{equation*}
  {\bar m}_N \underset{N\to\infty}{\longrightarrow} \bar m\quad\text{a.s.\ in }L^2\big([0,T];L^2(D)^3\big)\,,
\end{equation*}
\begin{equation*}
  {\bar X}_N \underset{N\to\infty}{\longrightarrow} \bar X\quad\text{a.s.\ in }L^2\big([0,T];L^2(D)^3\big)\,,
\end{equation*}
\begin{equation*}
 G{\bar W}_N\underset{N\to\infty}{\longrightarrow}G{\bar W}\quad\text{a.s.\ in }\mathcal{C}\big(0,T;L^2(D)^3\big)\,.
\end{equation*} 
\end{corollary}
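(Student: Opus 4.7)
The statement is a direct application of two classical tools to the tightness result of Proposition~\ref{pro_tension}, so the plan is essentially to verify that the hypotheses of both theorems apply in our setting. The target space
\[
E := L^2\big([0,T];L^2(D)^3\big) \times L^2\big([0,T];L^2(D)^3\big) \times \mathcal{C}\big([0,T];L^2(D)^3\big)
\]
is a product of separable Banach spaces endowed with their norm topologies, hence is a separable complete metric (Polish) space, which is the standing hypothesis required by both Prokhorov's and Skorohod's theorems.

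First I would invoke Prokhorov's theorem: since Proposition~\ref{pro_tension} asserts that the sequence of laws $\mathcal{L}(m_N, X_N, GW)$ on $E$ is tight, it is relatively compact for the weak convergence of probability measures. Extracting a subsequence (which I still index by $N$ for brevity), there exists a probability measure $\mu$ on $E$ such that $\mathcal{L}(m_N, X_N, GW)$ converges weakly to $\mu$.

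Next I would apply the Skorohod representation theorem on the Polish space $E$: there exists a probability space $(\bar\Omega, \bar{\mathcal{F}}, \bar{\mathbb{P}})$ carrying a sequence of $E$-valued random variables $(\bar m_N, \bar X_N, G\bar W_N)$ with
\[
\mathcal{L}\big(\bar m_N, \bar X_N, G\bar W_N\big) = \mathcal{L}\big(m_N, X_N, GW\big) \qquad \text{for each } N,
\]
together with an $E$-valued random variable $(\bar m, \bar X, G\bar W)$ of law $\mu$, such that
\[
(\bar m_N, \bar X_N, G\bar W_N) \underset{N\to\infty}{\longrightarrow} (\bar m, \bar X, G\bar W) \qquad \bar{\mathbb{P}}\text{-a.s.\ in } E.
\]
Since convergence in the product topology of $E$ is exactly componentwise convergence in each factor, this yields the three a.s.\ convergences stated in the corollary.

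Finally, I would make a small remark on the consistency of the notation. Writing the limit of the third component as ``$G\bar W$'' anticipates the fact that $G\bar W$ will, in Section~5, be recovered as a Wiener process with covariance $GG^*$: this is legitimate because the law of $GW_N = GW$ on $\mathcal{C}([0,T];L^2(D)^3)$ is the same for every $N$ (namely, that of a $GG^*$-Wiener process), so the limit law of the third marginal is this same Gaussian measure, and its identification as ``$G$ applied to a cylindrical Wiener process on a suitable filtration'' is deferred to the next section. There is no genuine obstacle in the present statement: the only point deserving care is verifying that $E$ is Polish (which follows from the separability of $L^2([0,T];L^2(D)^3)$ and $\mathcal{C}([0,T];L^2(D)^3)$) so that Skorohod's theorem is available in the form we need.
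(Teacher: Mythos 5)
Your proof is correct and follows exactly the route the paper intends: the paper simply states that the corollary is ``a simple application of Prokhorov and Skorohod theorems'' to the tightness of Proposition~\ref{pro_tension}, and your verification that $E$ is Polish, the extraction of a weakly convergent subsequence, and the Skorohod representation yielding componentwise a.s.\ convergence is precisely that argument spelled out.
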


Since $m_N$, $X_N$ are piecewise constant processes, the same is also true for their counterparts in the new probability space $\bar\Omega$. We define the following discrete parameter processes, for $0\le n \le N$ :
$$  {\bar m}_N^n:= \bar m_{N}(n{\Delta t}) \in L^2(D)^3\,,$$
$$ {\bar X}_N^n:= \bar X_{N}(n{\Delta t}) \in L^2(D)^3\,,$$
and also
$$G\Delta \bar W_N^n:=G\bar W_N((n+1)\Delta t)-G\bar W_N(n\Delta t)\,,$$
$${\bar A}_N^n:={\bar m}_N^n\times(G\Delta\bar W_N^n)\,,$$
and
${\bar v}_N^n$ as the unique solution of \eqref{pv} associated to the data $({\bar m}_N^n\,,G\Delta \bar W_N^n)$, i.e.\ for all $0\leq n\leq N$, and all $\varphi\in\mathbb{\bar W}_{N,n}$,
\begin{flalign}
 \label{pv_bar}
\Big(\bar v_N^n-  {\bar m}_N^n\times  &{\bar v}_N^n,\varphi\Big)_{2,x} +2\theta{\Delta t}\Big(\nabla  {\bar v}_N^n ,\nabla\varphi\Big)_{2,x} &\nonumber \\
=&-2{\Delta t}\Big(\nabla {\bar m}_N^n,\nabla\varphi\Big)_{2,x}+ \Big((\mathrm{Id}- {\bar m}_N^n\times)\big(  {\bar m}_N^n\times G{\Delta{\bar W}_N^n}\big),\varphi\Big)_{2,x}&\nonumber \\
 & +\frac{{\Delta t}}{2}\sum_{i\in\mathbb{N}}\Big((\mathrm{Id}-{\bar m}_N^n\times)\big(({\bar m}_N^n\times G_i)\times G_i\big),\varphi\Big)_{2,x}\,,&
\end{flalign}
where
$$\mathbb{\bar W}_{N,n}(\omega):=\Big\{ \psi\in H^1(D)^3, \;   \forall x\in D, \;  \psi(x)\perp{\bar m}_N^n(\omega,x) \Big\}.$$
These random variables have the same laws as their counterparts in $\Omega$ that is (respectively) ${m}_N^n$, ${X}_N^n$, $G\Delta W_N^n$ and ${A}_N^n:={m}_N^n\times(G\Delta W_N^n)$. We already noticed that ${ v}_N^n$ depends continuously on the couple $({m}_N^n\,,G\Delta \bar W_N^n)$ through \eqref{pv}, and thus the law of ${\bar v}_N^n$ is the same as the law of ${v}_N^n$. It also follows that we have the identity
 \begin{equation}
 \label{renormbar}
 {\bar m}_N^{n+1}=\frac{ {\bar m}_N^n+{\bar v}_N^n}{|{\bar m}_N^n+{\bar v}_N^n|}\quad \text{a.s.}                                              \end{equation} 
We still need to define the following processes on $\bar\Omega$~:
$\forall t\in[0,T]$,
\begin{equation}\label{def_bar_v}
  \bar {v}_N(t):={\bar v}_N^n\quad \text{if  } t\in[n{\Delta t},(n+1){\Delta t}),
\end{equation} 
and 
\begin{equation}\label{def_bar_w}
 {\bar w}_N(t):=\dfrac{{\bar v}_N^n-{\bar A}_N^n}{{\Delta t}}\quad \text{if } t\in[n{\Delta t},(n+1){\Delta t})\,.
\end{equation}
\begin{remark}\label{rema_conv_nabla}
By \eqref{estim_nabla_u_2_n} and a classical compactness argument, we may assume that up a subsequence the following convergence holds
\begin{equation}\label{weak_convergence_nabla_m}
 \nabla\bar m_N\underset{N\to\infty}{\rightharpoonup}\nabla\bar m\quad\text{weakly in }L^2(\bar\Omega\times[0,T]\times D)^{3\times3}\,.
\end{equation}
\end{remark}

\section{Convergence of the martingale part}
\label{sec:convergence_of_the_martingale_part}

In section 4 we proved that the process $\bar X_N$ converges almost surely in $L^2([0,T]$ $\times D)^3$ to $\bar X$. Here we show that $\bar X$ defines a square integrable continuous martingale with values in $L^2(D)^3$.  We define the filtration $(\mathcal{\bar F}_t)_{t\in[0,T]}$ as
\begin{equation}\label{continuous_filtration}
 \mathcal{\bar F}_t=\sigma\left\{G\bar W(s)\,,s\leq t\right\}\,.
\end{equation}
\begin{proposition}\label{pro_martingale_barX}
The process $\displaystyle t\in[0,T]\mapsto {\bar X}(t,\omega)\in L^2(D)^3$ is a square integrable continuous martingale with respect to the filtration $(\mathcal{\bar F}_t)$, with quadratic variation defined for all $a,b\in L^2(D)^3$ by:
\begin{equation*}
\Big(\ll \bar X\gg_ta\,,\,b\Big)_{2,x}=\int_0^t\Big({\bar m}\times(Ga)\,,\,{\bar m}\times(Gb)\Big)_{2,x}\ds \,. 
\end{equation*} 
\end{proposition}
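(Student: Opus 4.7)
The plan is to transfer the discrete martingale structure of $X_N$ to $\bar\Omega$ via the equality in law, pass to the limit in both the martingale identity and the discrete bracket computation, and finally upgrade to path continuity.

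First, I would observe that on the original space $(X_N^n)_{0\le n\le N}$ is an $L^2(D)^3$-valued square-integrable discrete martingale with respect to $\mathbb{F}_N^n$ defined in \eqref{discrete_filtration}: indeed $m_N^n$ is $\mathbb{F}_N^n$-measurable (built inductively from $m_0$ and past Wiener increments through \eqref{pv}--\eqref{pv_renorm}), while $G\Delta W_N^n$ is independent of $\mathbb{F}_N^n$ with zero mean, so $\mathbb{E}[A_N^n\mid\mathbb{F}_N^n]=0$. By equality in law, $(\bar X_N^n)$ is then a discrete martingale on $\bar\Omega$ with respect to $\bar{\mathbb{F}}_N^n:=\sigma\{G\bar W_N(k\Delta t),\,0\le k\le n\}$, and the uniform bound $\sup_{N,n}\mathbb{\bar E}[\|\bar X_N^n\|_{2,x}^2]<\infty$ proved in Section~\ref{sec:tightness} is preserved. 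Given $0\le s\le t\le T$, $a\in L^2(D)^3$ and a bounded continuous functional $\Phi:\mathcal{C}([0,s];L^2(D)^3)\to\mathbb{R}$, the discrete martingale property yields, writing $s_N:=\Delta t\lfloor s/\Delta t\rfloor$ and $t_N:=\Delta t\lfloor t/\Delta t\rfloor$,
$$
\mathbb{\bar E}\bigl[(\bar X_N(t_N)-\bar X_N(s_N),a)_{2,x}\,\Phi(G\bar W_N|_{[0,s_N]})\bigr]=0.
$$
The almost sure convergences from Corollary~\ref{cor_skorohod}, combined with the uniform $L^2$-bound (which gives uniform integrability via Vitali), allow one to pass to the limit and obtain $\mathbb{\bar E}[\bar X(t)-\bar X(s)\mid\mathcal{\bar F}_s]=0$. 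Adaptedness of $\bar X(t)$ to $\mathcal{\bar F}_t$ follows from the corresponding discrete adaptedness together with the joint convergence of $(\bar X_N,G\bar W_N)$.

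For the quadratic variation, the key step relies on the Gaussian structure: since $\bar m_N^n$ is $\bar{\mathbb{F}}_N^n$-measurable and $G\Delta\bar W_N^n$ is independent of $\bar{\mathbb{F}}_N^n$ with covariance $\Delta t\,GG^*$, expanding on the ONB gives
$$
\mathbb{\bar E}\bigl[(\bar X_N^{n+1}-\bar X_N^n,a)(\bar X_N^{n+1}-\bar X_N^n,b)\,\big|\,\bar{\mathbb{F}}_N^n\bigr]
=\Delta t\,\bigl(\bar m_N^n\times Ga,\bar m_N^n\times Gb\bigr)_{2,x},
$$
so that $(\bar X_N^n,a)(\bar X_N^n,b)-\sum_{k<n}\Delta t\,(\bar m_N^k\times Ga,\bar m_N^k\times Gb)_{2,x}$ is an $\bar{\mathbb{F}}_N^n$-martingale. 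Testing this identity against a bounded continuous functional of $G\bar W_N|_{[0,s_N]}$ and passing to the limit using the a.s.\ convergences of Corollary~\ref{cor_skorohod}, the pointwise bound $|\bar m_N|\le1$, and uniform $L^4$-estimates on $\bar X_N$ (obtained via the discrete Burkholder--Davis--Gundy inequality applied to the Gaussian martingale) identifies the limiting bracket as $\int_0^t(\bar m\times Ga,\bar m\times Gb)_{2,x}ds$.

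The main obstacle will be path continuity, since each $\bar X_N$ is only piecewise constant and a priori the limit could be merely c\`adl\`ag. I would address this in two stages: (i) for every fixed $a\in L^2(D)^3$ the scalar martingale $t\mapsto(\bar X(t),a)_{2,x}$ has absolutely continuous quadratic variation $t\mapsto\int_0^t\|\bar m\times Ga\|_{2,x}^2ds$ (by the previous step), hence admits a continuous modification; (ii) the $L^2(D)^3$-valued jumps of $\bar X_N$ are controlled by $\|\bar A_N^n\|_{2,x}=O_{L^2(\bar\Omega)}(\sqrt{\Delta t})$ thanks to \eqref{estim_A_2}, so a tightness argument in a suitable Skorohod-type space forces the jumps to disappear in the limit. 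Combining both ingredients yields that $\bar X$ admits a continuous modification as an $L^2(D)^3$-valued square-integrable $(\mathcal{\bar F}_t)$-martingale with the announced quadratic variation.
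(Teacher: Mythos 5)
Your treatment of the martingale property and of the quadratic variation is essentially the same as the paper's: transfer the discrete martingale identities \eqref{martingale_XN_1}--\eqref{martingale_XN_2} to $\bar\Omega$, test against bounded continuous functionals of the (discretized) Wiener path, and pass to the limit using the a.s.\ convergences of Corollary~\ref{cor_skorohod} together with uniform moment bounds (the paper uses Proposition~\ref{pro_moments_X} and Egorov to upgrade a.s.\ convergence to $L^2(\bar\Omega\times[0,T]\times D)$ convergence, then Vitali/dominated convergence; your version is the same argument in slightly different words). So that part is fine.

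The continuity argument, however, has a genuine gap. Your step (i) claims that because $t\mapsto(\bar X(t),a)_{2,x}$ has absolutely continuous quadratic variation $\int_0^t\|\bar m\times Ga\|_{2,x}^2\,ds$, it admits a continuous modification. This is false: the object you have computed is the \emph{predictable} quadratic variation $\langle M\rangle$, and absolute continuity of $\langle M\rangle$ does not rule out jumps. The compensated Poisson process $N_t-\lambda t$ has $\langle M\rangle_t=\lambda t$, which is as absolutely continuous as one could wish, yet the paths are purely discontinuous. What forces continuity is the \emph{optional} quadratic variation $[M]$ being continuous, and that is not what the limit identity gives you. Your step (ii) is directionally right (the jumps of $\bar X_N$ are $O(\sqrt{\Delta t})$ in $L^2$) but is left as a vague appeal to "a tightness argument in a suitable Skorohod-type space"; since the tightness in Section~\ref{sec:tightness} is stated in $L^2([0,T]\times D)^3$, which carries no pathwise/jump information, this would need substantial additional work. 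The paper avoids the issue entirely: it proves a uniform-in-$N$ Kolmogorov-type increment bound $\mathbb{E}[\|\bar X_N(t)-\bar X_N(s)\|_{2,x}^{2q}]\leq C_q(|t-s|+\Delta t)^q$ for every $q$, obtained by applying the discrete Burkholder--Davis--Gundy inequality (Lemma~\ref{lem_moments}) to the shifted martingale $\bar M_N^l=\bar X_N^{n+l}-\bar X_N^n$ and the Gaussian moment estimate \eqref{moment_gauss_q}, then passes to the limit via Fatou and invokes Kolmogorov's continuity criterion. You should replace your two-step continuity argument by this moment-plus-Kolmogorov route; the rest of your proposal stands.
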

The proof needs an additional martingale-type uniform estimate on $X_N$.
\begin{proposition}\label{pro_moments_X}
For all $q\in\mathbb{N}$, there exists a constant $C=C(\|G\|_{2,0},T,q)>0$ independent of $N\in\mathbb{N}^*$, such that
\begin{equation*}
\mathbb{E}\left[ \max_{n\in\{0,\dots,N\}} \|{\bar X}_N^n\|_{2,x}^{2q}\right]\leq C\,.
\end{equation*}  
\end{proposition}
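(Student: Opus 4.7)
My plan is to exploit the martingale structure of $(\bar X_N^n)$ in the Hilbert space $L^2(D)^3$ and combine Doob's maximal inequality with a Burkholder--Davis--Gundy type bound. First I will introduce the discrete filtration $\bar{\mathbb{F}}_N^n := \sigma\{G\bar W_N(k\Delta t),\, 0 \leq k \leq n\}$ on $\bar\Omega$. Since $(\bar m_N, \bar X_N, G\bar W_N)$ has the same law as $(m_N, X_N, GW)$, the process $G\bar W_N$ is a Wiener process on $\bar\Omega$ with covariance operator $GG^*$, so its increments $G\Delta\bar W_N^k$ are i.i.d.\ centered Gaussians in $L^2(D)^3$, independent of $\bar{\mathbb{F}}_N^k$. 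By induction on $k$, using that $\bar v_N^k$ (and hence $\bar m_N^{k+1}$ via \eqref{renormbar}) is a continuous function of $(\bar m_N^k, G\Delta\bar W_N^k)$, each $\bar m_N^k$ is $\bar{\mathbb{F}}_N^k$-measurable, so that $\bar A_N^k = \bar m_N^k \times G\Delta\bar W_N^k$ is a centered martingale increment and $(\bar X_N^n)_{0 \leq n \leq N}$ is an $L^2(D)^3$-valued martingale for $\bar{\mathbb{F}}_N^{\cdot}$.

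Since $\|\cdot\|_{2,x}$ is convex, $\|\bar X_N^n\|_{2,x}$ is a nonnegative submartingale, and Doob's $L^{2q}$ maximal inequality yields
\begin{equation*}
\mathbb{E}\Big[\max_{0 \leq n \leq N} \|\bar X_N^n\|_{2,x}^{2q}\Big] \leq C_q\, \mathbb{E}\big[\|\bar X_N^N\|_{2,x}^{2q}\big].
\end{equation*}
I then bound the right-hand side via the discrete Hilbert-valued Burkholder--Davis--Gundy inequality applied to $(\bar X_N^n)$,
\begin{equation*}
\mathbb{E}\big[\|\bar X_N^N\|_{2,x}^{2q}\big] \leq C_q\, \mathbb{E}\Big[\Big(\sum_{k=0}^{N-1} \|\bar A_N^k\|_{2,x}^2\Big)^q\Big].
\end{equation*}

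To control this quadratic-variation term, I will use $|\bar m_N^k(x)| = 1$ a.e.\ combined with $|a \times b| \leq |a|\,|b|$ in $\mathbb{R}^3$ to obtain the pointwise bound $|\bar A_N^k(x)| \leq |G\Delta\bar W_N^k(x)|$, hence $\|\bar A_N^k\|_{2,x}^2 \leq \|G\Delta\bar W_N^k\|_{2,x}^2$. Jensen's inequality applied to $s \mapsto s^q$ then gives
\begin{equation*}
\Big(\sum_{k=0}^{N-1}\|\bar A_N^k\|_{2,x}^2\Big)^q \leq N^{q-1} \sum_{k=0}^{N-1} \|G\Delta \bar W_N^k\|_{2,x}^{2q}.
\end{equation*}
Since each $G\Delta \bar W_N^k$ is a centered Gaussian in $L^2(D)^3$ with covariance $(\Delta t)\,GG^*$, Fernique's theorem (equivalently, a direct Wick/Gaussian moment computation) yields $\mathbb{E}\big[\|G\Delta\bar W_N^k\|_{2,x}^{2q}\big] \leq C_q (\Delta t)^q\|G\|_{2,0}^{2q}$. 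Summing over $k$ and using $N\Delta t = T$ produces the uniform bound $C_q T^q \|G\|_{2,0}^{2q}$, which is the desired estimate.

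The only mildly delicate point I anticipate is the careful bookkeeping required to transfer the martingale/adaptedness structure from $\Omega$ to $\bar\Omega$ after Skorokhod's theorem, along with the correct formulation of Doob's and Burkholder--Davis--Gundy's inequalities in the Hilbert-valued setting; these are classical but warrant explicit verification. Modulo these ingredients, the argument reduces to the Gaussian moment computation above and is entirely routine.
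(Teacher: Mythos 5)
Your argument is correct and follows essentially the same route as the paper: establish that $(\bar X_N^n)$ is an $L^2(D)^3$-valued martingale, reduce the $2q$-th moment of the maximum to a power of the discrete quadratic variation via a Hilbert-valued Burkholder--Davis--Gundy estimate, then bound each increment using $|\bar m_N^n|=1$ and the Gaussian moment estimate for $G\Delta\bar W_N^n$. The only cosmetic difference is that you split the maximal estimate into Doob's $L^{2q}$ inequality followed by BDG at the terminal time, whereas the paper invokes a single discrete maximal BDG inequality (Lemma \ref{lem_moments}, quoted from Assouad) that combines both steps.
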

To prove proposition \ref{pro_moments_X}, we 
state a  discrete version of the Burkholder-Davis-Gundy inequality with values in a Hilbert space. The following result is a particular case of Proposition $2$ of \cite{ASSOUAD1975}, and we therefore omit the proof.
\begin{lemma}\label{lem_moments}
For a given discrete parameter martingale $(M^n)_{0\leq n\leq N}$ with values in a Hilbert space $H$, for any $q\in\mathbb{N}^*$, there exist $C=C(q)>0$ such that the following inequality holds~:
\begin{equation*}
\mathbb{E}\Big[\max_{0\leq n\leq N}\|M^n\|^{2q}_H\Big]\leq C\mathbb{E}\Big[\Big(\sum_{n=0}^{N-1}\|M^{n+1}-M^n\|_H^2\Big)^q\Big]\,.
 \end{equation*}
\end{lemma}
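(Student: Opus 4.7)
The plan is to reduce the Hilbert-valued statement to the classical real-valued Burkholder--Davis--Gundy inequality via the algebraic expansion of $\|M^{n+1}\|_H^2 - \|M^n\|_H^2$, combined with Doob's maximal inequality and a Young-type absorption. Without loss of generality I assume $M^0 = 0$, which may be arranged by replacing $M^n$ by $M^n - M^0$ (the extra $\|M^0\|_H^{2q}$ is controlled by the $n=0$ contribution on the right after a minor adjustment).

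First, since $\|\cdot\|_H^{2q}$ is convex, $(\|M^n\|_H^{2q})_{0\leq n\leq N}$ is a nonnegative submartingale, and Doob's $L^{2q}$ maximal inequality yields
\[
\mathbb{E}\Big[\max_{0\leq n\leq N}\|M^n\|_H^{2q}\Big] \leq \Bigl(\tfrac{2q}{2q-1}\Bigr)^{2q} \mathbb{E}\bigl[\|M^N\|_H^{2q}\bigr].
\]
Writing $d_{n+1} := M^{n+1} - M^n$, the telescoping identity gives the pointwise decomposition
\[
\|M^N\|_H^2 = \sum_{n=0}^{N-1}\bigl(2\langle M^n, d_{n+1}\rangle_H + \|d_{n+1}\|_H^2\bigr) = 2T_N + S_N,
\]
where $S_N := \sum_{n=0}^{N-1}\|d_{n+1}\|_H^2$ and $T_N := \sum_{n=0}^{N-1}\langle M^n, d_{n+1}\rangle_H$. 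The process $(T_n)$ is a real-valued $(\mathcal{F}_n)$-martingale, since $M^n$ is $\mathcal{F}_n$-measurable and $\mathbb{E}[d_{n+1}\mid\mathcal{F}_n]=0$. Using $(|a|+|b|)^q \leq 2^{q-1}(|a|^q+|b|^q)$ on $(2T_N+S_N)^q = |2T_N+S_N|^q$, one obtains the pointwise bound $\|M^N\|_H^{2q} \leq C_q\bigl(|T_N|^q + S_N^q\bigr)$.

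Next I would apply the classical scalar BDG inequality to $T$: its quadratic variation satisfies, by Cauchy--Schwarz in $H$,
\[
\sum_{n=0}^{N-1}\langle M^n, d_{n+1}\rangle_H^2 \leq \Bigl(\max_{0\leq n\leq N}\|M^n\|_H^2\Bigr)\, S_N,
\]
whence, by the scalar BDG followed by Cauchy--Schwarz on $\Omega$,
\[
\mathbb{E}\bigl[|T_N|^q\bigr] \leq C_q\, \mathbb{E}\Bigl[\max_{0\leq n\leq N}\|M^n\|_H^{2q}\Bigr]^{1/2} \mathbb{E}\bigl[S_N^q\bigr]^{1/2}.
\]
Setting $A := \mathbb{E}\bigl[\max_n\|M^n\|_H^{2q}\bigr]$ and $B := \mathbb{E}\bigl[S_N^q\bigr]$, the three inequalities above combine into $A \leq C_q\, A^{1/2} B^{1/2} + C_q\, B$. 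Young's inequality $A^{1/2}B^{1/2} \leq \varepsilon A + (4\varepsilon)^{-1} B$ with $\varepsilon$ chosen so that $C_q\varepsilon < 1/2$ then absorbs the $A$-term into the left-hand side and produces $A \leq C_q' B$, which is the conclusion.

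The main obstacle is that the absorption step is only legitimate when $A < \infty$ a priori; otherwise one cannot conclude from $A \leq C_q\varepsilon A + \dots$ I would handle this via the standard localization: apply the argument to the stopped martingale $(M^{n\wedge\tau_K})$ with $\tau_K := \inf\{n : \|M^n\|_H > K\}$, for which $A$ is clearly finite, and then let $K\to\infty$ by monotone convergence on both sides of the estimate. Note that the constant $C_q$ produced depends only on $q$ and on the constant in the scalar BDG inequality, and in particular is independent of $N$, which is precisely what the applications of the lemma require.
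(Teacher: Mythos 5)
Your proof is correct, but it is genuinely different from what the paper does: the paper gives no proof at all, stating that the lemma is a special case of Proposition 2 of Assouad's work on Burkholder inequalities in $p$-smooth Banach spaces (a Hilbert space being $2$-smooth), whereas you give a self-contained argument that exploits the inner-product structure directly. Your route --- Doob's $L^{2q}$ maximal inequality, the exact expansion $\|M^N\|_H^2=2\sum_n\langle M^n,d_{n+1}\rangle_H+\sum_n\|d_{n+1}\|_H^2$, the scalar BDG inequality for the real martingale $T_N$ combined with the termwise Cauchy--Schwarz bound $[T]_N\leq(\max_n\|M^n\|_H^2)S_N$, and absorption after localization --- is a standard and complete way to prove the ``maximal function dominated by square function'' half of the Hilbert-valued BDG inequality, and it correctly yields a constant depending only on $q$. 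What the paper's citation buys is generality (the result holds in any $2$-smooth space, where no such Pythagorean identity is available); what your argument buys is that the reader needs only the classical real-valued Burkholder/Davis inequality and Doob's inequality. Two minor caveats. First, the lemma as stated is literally false when $M^0\neq 0$ (take a constant martingale), so the reduction to $M^0=0$ is not a cosmetic normalization; your parenthetical claim that $\|M^0\|_H^{2q}$ is ``controlled by the $n=0$ contribution on the right'' is not correct --- the right-hand side involves only increments. The honest statement is that the lemma must be read with $M^0=0$, which is the case in both of the paper's applications (to $\bar X_N^n$ and to $\bar M_N^l=\bar X_N^{n+l}-\bar X_N^n$). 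Second, in the localization step, $\max_n\|M^{n\wedge\tau_K}\|_H$ is bounded by $K$ plus the largest increment, so its $2q$-th moment is finite only once one observes that each $\mathbb{E}[\|d_{n+1}\|_H^{2q}]$ is dominated by the right-hand side $B$, which one may assume finite since otherwise there is nothing to prove; with that remark the absorption and the passage $K\to\infty$ by monotone convergence are legitimate.
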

\begin{remark}
Since for all $N\in\mathbb{N}^*$, the laws of $X_N$ and $\bar X_N$ are equal, note that for all $t\in[0,T]$, and almost surely,
\begin{equation}\label{expression_of_bar_XN}  {\bar X}_N(t)=\sum_{0\leq(n+1){\Delta t}\leq t}{\bar m}_N^n\times{G}\Delta \bar W_N^n\,.
\end{equation}
It is easily seen, using \eqref{pv_bar} and \eqref{renormbar} that $(\bar m_N^n)_{0\leq n \leq N}$ is adapted to
\begin{equation}\label{discrete_filtration_bar}
 \mathbb{\bar F}_{N}^n=\sigma\left\{G\bar W_N(k\Delta t) ;  k\in\mathbb{N}^*,k\leq n\right\}\,,
\end{equation}
and the process $\bar X_N^n$ defines a martingale with respect to this filtration. In particular, we have the following identity~: for all $0\leq n\leq n'\leq N$, and any bounded continuous function $\phi$ on $(L^2(D)^3)^n$,
\begin{equation}\label{martingale_XN_1}
 \mathbb{E}\Big[\big(\bar X_N^{n'}-\bar X_N^{n}\big)\phi(G\bar W_N(\Delta t),\dots,G\bar W_N(n\Delta t))\Big]=0\,,
\end{equation}
The reader may also check that for any $n,n',\phi$ as above, and for all $a,b\in L^2(D)^3$,
\begin{multline}\label{martingale_XN_2}
\mathbb{E}\Big[\Big((\bar X_N^{n'},a)_{2,x}(\bar X_N^{n'},b)_{2,x}-(\bar X_N^n,a)_{2,x}(\bar X_N^n,b)_{2,x}\\
-\sum_{n\leq k\leq n'-1}{\Delta t}\big( {\bar m}_N^k\times(Ga)\,,\, {\bar m}_N^k\times(Gb)\big)_{2,x} \Big)\\ \phi(G\bar W_N(\Delta t),\dots,G\bar W_N(n\Delta t))\Big]=0.
\end{multline}
Equation \eqref{martingale_XN_2} gives us the quadratic variation of $(\bar X_N^n)_{0\leq n\leq N}$.
\end{remark}
\begin{proof}[Proof of Proposition \ref{pro_moments_X}]
 Assume that $N\in\mathbb{N}^*$ is given.
We apply Lemma \ref{lem_moments} to the discrete parameter martingale $\big({\bar X}_N^n\big)_{0\leq n \leq N}$, which takes values in the Hilbert space $H=L^2(D)^3$.
Thanks to \eqref{expression_of_bar_XN}, and H\"older's inequality, one has
\begin{align*}
 \mathbb{E}\Big[\Big(\sum_{n=0}^{N-1}\|\bar X_N^{n+1}-{\bar X}_N^n\|_{2,x}^2\Big)^q\Big]&=\mathbb{E}\Big[\Big(\sum_{n=0}^{N-1}\big\|{\bar m}_N^n\times(G\Delta\bar W_N^n)\big\|_{2,x}^{2}\Big)^q\Big]\\
&\leq  N^{q-1}\sum_{n=0}^{N-1}\mathbb{E}\Big[\|{\bar m}_N^n\times(G\Delta\bar W_N^n)\|_{2,x}^{2q}\Big]\,.
\end{align*}
It is known (see for instance \cite{DAPRATOZABCZYCK2008}, corollary 2.17 ) that since $G\Delta\bar W_N^n$ is a gaussian random variable with covariance $\Delta tGG^*$,  there exists a constant $C(\|G\|_{2,0},q)>0$ (independent of $n$ and $N$) such that :
\begin{equation}\label{moment_gauss_q}
 \mathbb{E}\Big[\big\|G\Delta\bar W_N^n\big\|_{2,x}^{2q}\Big]\leq C(\|G\|_{2,0},q)\Delta t^q\,.
\end{equation}
Thus one has, recalling that $|\bar m_N^n|=1 \text{ a.e.}$,
\begin{align*}
\mathbb{E}\Big[\Big(\sum_{n=0}^{N-1}\|\bar X_N^{n+1}-{\bar X}_N^n\|_{2,x}^2\Big)^q\Big] &\leq N\big(C(\|G\|_{2,0},q){\Delta t}^q\big)N^{q-1}\leq C'(\|G\|_{2,0},T,q)\,.
\end{align*}
This proves proposition \ref{pro_moments_X}.
\end{proof}
We now turn to the proof of Proposition \ref{pro_martingale_barX}.
\begin{proof}[Proof of Proposition \ref{pro_martingale_barX}.]
\textit{$\bar X$ is a martingale :}\\
We use equalities \eqref{martingale_XN_1} and \eqref{martingale_XN_2}. 
We have to show that for any bounded continuous function $\phi$ defined on the space $(L^2(D)^3)^K$, any $a,b\in L^2(D)^3$ the following relations hold for almost all $0\leq s\leq t\leq T$, all $K\in\mathbb{N}^*$, and $t_1\leq \dots t_K<s$~:
\begin{equation}\label{martingale_bar_X}
\mathbb{E}\left[( {\bar X}(t)- {\bar X}(s))\phi\big(G\bar W(t_1),\dots,G\bar W(t_K)\big)\right]=0\,.
\end{equation}
and
\begin{multline}\label{martingale_bar_X_var_quad}
\mathbb{E}\Big[\Big(({\bar X}(t),a)_{2,x}({\bar X}(t),b)_{2,x}-({\bar X}(s),a)_{2,x}({\bar X}(s),b)_{2,x}\\
-\int_s^t \big({\bar m}(\sigma)\times(Ga)\,,\, {\bar m}(\sigma)\times(Gb)\big)_{2,x} \mathrm{d}\sigma\Big)\phi\big(G\bar W(t_1),\dots,G\bar W(t_K)\big)\Big]=0\,.
\end{multline}
First, observe that as a consequence of Proposition \ref{pro_moments_X}, and Egorov's Theorem, 
\begin{equation}\label{conv_egoroff}
 {\bar X}_N\underset{N\to\infty}{\longrightarrow}\bar X \text{ in } L^2(\bar\Omega\times[0,T]\times D)^3\,.
\end{equation}

Hence, up to a subsequence, one has for almost all $t,s\in[0,T]$,
\begin{equation*}
{\bar X}_N(t)- {\bar X}_N(s)\underset{N\to\infty}{\longrightarrow}{\bar X}(t)-{\bar X}(s),\quad \text{in }L^2(\bar\Omega\times D)^3\,.
\end{equation*}
For all $0\leq k\leq K$, if $\left[\frac{t_k}{\Delta t}\right]$ denotes the floor of $\frac{t_k}{\Delta t}$, then $\left[\frac{t_k}{\Delta t}\right]\Delta t$ tends to $t_k$ as $N\to\infty$. Taking into account the almost sure continuity of the limit process $G\bar W$, and the fact that the process $G\bar W_N$ converges almost surely to ${G\bar W}$ in $\mathcal{C}([0,T]; L^2(D)^3)$ as $N$ tends to $\infty$, one has 
\begin{multline*}\textstyle \left(G\bar W_N\left(\left[\frac{t_1}{\Delta t}\right]\Delta t\right),\cdots,G\bar W_N\left(\left[\frac{t_K}{\Delta t}\right]\Delta t\right)\right)\\ \underset{N\to\infty}{\longrightarrow}(G\bar W(t_1),\cdots,G\bar W(t_K))\text{ in }(L^2(D)^3)^K\,.
\end{multline*}
The application $\phi$ being continuous, we conclude that
\begin{multline*}
\textstyle
 \mathbb{E}\left[(\bar X_N(t)-\bar X_N(s))\phi\left(G\bar W_N\left(\left[\frac{t_1}{\Delta t}\right]\Delta t\right),\cdots,G\bar W_N\left(\left[\frac{t_K}{\Delta t}\right]\Delta t\right)\right)\right]\\
 \underset{N\to\infty}{\longrightarrow}\mathbb{E}\left[(\bar X_N(t)-\bar X_N(s))\phi(G\bar W(t_1),\cdots,G\bar W(t_K))\right]\,.
\end{multline*}
On the other hand, 
by \eqref{martingale_XN_1}
\begin{equation*}
\textstyle
 \mathbb{E}\left[(\bar X_N(t)-\bar X_N(s))\phi\left(G\bar W_N\left(\left[\frac{t_1}{\Delta t}\right]\Delta t\right),\cdots,G\bar W_N\left(\left[\frac{t_K}{\Delta t}\right]\Delta t\right)\right)\right]
 =0\,,
\end{equation*}
and \eqref{martingale_bar_X} is proved.

If $a,b\in L^2([0,T]\times D)^3$, then \eqref{martingale_XN_2} implies that~:
\begin{multline*}
\textstyle
\mathbb{E}\Big[\Big(( {\bar X}_N(t),a)_{2,x}( {\bar X}_N(t),b)_{2,x}-( {\bar X}_N(s),a)_{2,x}( {\bar X}_N(s),b)_{2,x}\\
-\sum_{s<(n+1){\Delta t}\leq t}\textstyle \Delta t\big( {\bar m}_N^n\times(Ga)\,,\, {\bar m}_N^n\times(Gb)\big)_{2,x}\Big) \\ \phi(G\bar W_N([\frac{t_1}{\Delta t}]\Delta t),\cdots,G\bar W_N([\frac{t_K}{\Delta t}]\Delta t))\Big]=0\,.
\end{multline*} 
Moreover,
\begin{equation*}
 ( {\bar X}_N(t),a)_{2,x}( {\bar X}_N(t),b)_{2,x}-( {\bar X}_N(s),a)_{2,x}( {\bar X}_N(s),b)_{2,x}
 \end{equation*}
 tends to
\begin{equation*}
({\bar X}(t),a)_{2,x}({\bar X}(t),b)_{2,x}
-({\bar X}(s),a)_{2,x}({\bar X}(s),b)_{2,x}
\end{equation*}
in $L^1(\bar\Omega)$, while
\begin{equation*}
 \sum_{s<(n+1){\Delta t}\leq t}\Delta t\big( {\bar m}_N^n\times(Ga)\,,\, {\bar m}_N^n\times(Gb)\big)_{2,x}
 \end{equation*}
 converges to
\begin{equation*}
\int_s^t \big( {\bar m}(\sigma)\times{G a}\,,\,{\bar m}(\sigma)\times{G b}\big)_{2,x}\mathrm{d}\sigma,
\end{equation*}
in $L^1(\bar\Omega)$.
This proves \eqref{martingale_bar_X_var_quad}.
It remains to prove that $\bar X$ has continuous trajectories.\bigskip\\
\textit{Proof of the continuity.}\\
We prove that the limit $\bar X$ satisfies the assumptions of Kolmogorov's test (see e.g. \cite{DAPRATOZABCZYCK2008}, theorem 3.3). More precisely, we show that for any $q\in\mathbb{N}^*$, there exists $C_q>0$, such that for almost every $(t,s)\in[0,T]^2$,
\begin{equation}\label{kolmogorov_test}
 \mathbb{E}\left[\|{\bar X}(t)-{\bar X}(s)\|_{2,x}^{2q}\right]\leq C_q|t-s|^q\,.
\end{equation} 
Let $T\geq t>s\geq 0,\quad $ and $n,n'\in\mathbb{N}$, the unique integers such that 
$t\in[n'{\Delta t},(n'+1){\Delta t})$ and $s\in[n{\Delta t},(n+1){\Delta t}[$. One has $|t-n'{\Delta t}|\leq\Delta t$ and $|s-n{\Delta t}|\leq{\Delta t}$.
We consider the discrete parameter martingale which starts at $n\Delta t$, and whose increments are the same as $\big({\bar X}_N^k\big)_{k\in\{0,\dots,N\}}$. More precisely, let $\displaystyle (M_N^l)_{0\leq l\leq n'-n}$ be the discrete parameter process defined by
$$
  \bar M_N^l={\bar X}_N^{n+l}-{\bar X}_N^n 
 = \sum_{k=n+1}^{n+l}\bar A_N^k, \quad \text{for}\quad 0\leq l\leq  n'-n \,.
$$
The process $\displaystyle (\bar M_N^l)_{0\leq l\leq n'-n}$ defines a martingale for the discrete filtration  \linebreak$(\mathbb{\bar F}_{(l+n){\Delta t}})_{0\leq l\leq n'-n}\,,$
(see \eqref{discrete_filtration}).
Using similar arguments as for the proof of Proposition \ref{pro_moments_X}, and in particular Lemma \ref{lem_moments},
\begin{eqnarray*}
\mathbb{E}\left[\| {\bar X}_N(t)- {\bar X}_N(s)\|_{2,x}^{2q}\right]& \leq &\mathbb{E}\left[\max_{l=0,\dots ,n'-n}\|\bar M_N^l\|_{2,x}^{2q}\right]\\
&\leq& C\sum_{k=n+1}^{n'}\mathbb{E}\big[\|{\bar A}_N^{k}\|_{2,x}^{2q}\big](n'-n)^{q-1}\\
&\leq& C(\|G\|_{2,0}, q)(n'{\Delta t}-n{\Delta t})^q\\
&\leq& C(\|G\|_{2,0}, q)(|t-s|+\Delta t)^q\,.
\end{eqnarray*}
Then, \eqref{kolmogorov_test} follows from \eqref{conv_egoroff} and Fatou's Lemma. Thus, $\bar X$ defines a continous martingale with respect to $(\mathcal{\bar F}_t)_{t\in[0,T]}$ (see \eqref{continuous_filtration}). As we saw in the proof of Proposition \ref{pro_tension} the processes $\bar X_N$, for $N\in\mathbb{N}^*$ are square-integrable, uniformly in $N$, thus the almost sure limit $\bar X$ is square-integrable. This proves Proposition \ref{pro_martingale_barX}.
\end{proof}

We are now ready to apply the continuous martingale representation theorem for Hilbert space-valued Wiener processes. We have shown that the limit process ${\bar X}$ satisfies its hypotheses.
The quadratic variation of ${\bar X}$ is given, for any $a,b\in L^2(D)^3$, by~:
\begin{equation*}
 \big(\ll {\bar X}\gg_ta,b\big)_{2,x}=\int_0^t\big({\bar m(s)}\times(Ga),\,{\bar m(s)}\times(Gb)\big)_{2,x}\ds ,\quad t\in[0,T].
\end{equation*}
There exists an enlarged probability space
$ (\tilde\Omega, \mathcal{\tilde F},\mathbb{\tilde P})$,
with
$\bar\Omega\subseteq\tilde\Omega$,
a filtration
$ \{\mathcal{\tilde F}_t\}$,
and a $L^2(D)^3$-valued Wiener process
$ G\tilde W$ defined on $(\tilde\Omega,\mathcal{\tilde F},\mathbb{\tilde P})$,
such that
${\bar X}$, ${\bar m}$ can be extended to random variables on this space, and
 \begin{equation}\label{representation_martingale}
  \bar X(t,\tilde\omega)=\int_0^t\bar m(s,\tilde\omega)\times{G}d\tilde W(s,\tilde\omega).
 \end{equation} 
 
\section{Identification of the limit}
\label{sec:identification_of_the_limit}

In this section, the purpose is to find a relation between $\bar X$ and the limit $\bar m$. Noticing that $\sum_{0\leq(n+1){\Delta t}\leq t}\big({\bar m}_N^{n+1}-{\bar m}_N^n\big)=\bar m_N(t)-m_0,$ and by definition of ${\bar w}_N^n$ in \eqref{def_bar_w}, one may write $\bar X_N(t)$ as :
\begin{equation}
{\bar X}_N(t)={\bar m}_N(t)-{m}_0 -\sum_{0\leq(n+1){\Delta t}\leq t}{\Delta t}\hspace{0.1cm}{\bar w}_N^n -\sum_{0\leq(n+1){\Delta t}\leq t}\big({\bar m}_N^{n+1}-{\bar m}_N^n-{\bar v}_N^n\big)\label{tautology}
\end{equation} 
for any $t\in[0,T]$.
\begin{proposition}\label{pro_bar_w}
Up to a subsequence~:
\begin{equation*}
\bar w_N\underset{N\to\infty}{\rightharpoonup}\bar w\quad \text{weakly in }L^2(\bar\Omega\times[0,T]\times D)^3\,,
\end{equation*}
with
\begin{equation*}
 \bar w=\Delta\bar m + {\bar m}|\nabla {\bar m}|^2+\bar m\times\Delta\bar m +\frac{1}{2}\Pi_{{\bar m}^\perp}\sum_{i\in\mathbb{N}}({\bar m}\times G_i)\times G_i\,,
\end{equation*}
and $\Pi_{{\bar m(\omega,t,x)}^\perp}$ stands for the $\mathbb{R}^3$ orthogonal projection on ${\bar m(\omega,t,x)}^\perp$, for each $\omega,t,x$\,.
\end{proposition}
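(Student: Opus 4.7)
Estimate \eqref{estim_w_2_n} shows that $(\bar w_N)$ is bounded in $L^2(\bar\Omega\times[0,T]\times D)^3$, so up to a subsequence $\bar w_N\rightharpoonup\bar w$ weakly. Since $\bar v_N^n$ and $\bar A_N^n$ are both orthogonal to $\bar m_N^n$ pointwise, we have $\bar w_N\cdot\bar m_N=0$ a.e.; combined with the almost sure strong $L^2$-convergence of $\bar m_N$ to $\bar m$, this yields $\bar w\cdot\bar m=0$ in the limit, which is also manifestly satisfied by the candidate formula. To identify $\bar w$, the plan is to use \eqref{pv_bar} with the admissible test function $\varphi=\bar m_N^n\times\psi(n\Delta t)$, where $\psi\in C^\infty([0,T]\times\overline D)^3$ is arbitrary (this lies in $\mathbb{\bar W}_{N,n}$ since $\bar m_N\times\psi\perp\bar m_N$). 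Dividing by $\Delta t$, integrating in time, taking expectation, and simplifying by means of the identities
\[
(\bar w_N,\bar m_N\times\psi)_{2,x}=-(\bar m_N\times\bar w_N,\psi)_{2,x},\qquad(\bar m_N\times\bar w_N,\bar m_N\times\psi)_{2,x}=(\bar w_N,\psi)_{2,x}
\]
(the latter using $\bar w_N\perp\bar m_N$ and $|\bar m_N|=1$), one arrives at
\begin{align*}
\mathbb E\!\int_0^T\!((\mathrm{Id}+\bar m_N\times)\bar w_N,\psi)_{2,x}\,dt
&=\mathbb E\!\int_0^T\!\bigl[2(\nabla\bar m_N,\nabla(\bar m_N\times\psi))_{2,x}+2\theta(\nabla\bar v_N,\nabla(\bar m_N\times\psi))_{2,x}\\
&\quad-\tfrac12\sum_i((\mathrm{Id}-\bar m_N\times)Y_i^N,\bar m_N\times\psi)_{2,x}\bigr]dt,
\end{align*}
where $Y_i^N:=(\bar m_N\times G_i)\times G_i$.

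The main obstacle is the Dirichlet term, which a priori contains two factors involving the only weakly convergent $\nabla\bar m_N$. The remedy uses the triple-product identity $\partial_i\bar m_N\cdot(\partial_i\bar m_N\times\psi)=0$ to collapse it to
\[(\nabla\bar m_N,\nabla(\bar m_N\times\psi))_{2,x}=-(\nabla\psi,\bar m_N\times\nabla\bar m_N)_{2,x},\]
in which $\nabla\psi\in L^\infty$, $\bar m_N\to\bar m$ strongly in $L^2$, and $\nabla\bar m_N\rightharpoonup\nabla\bar m$ weakly in $L^2$ (by Remark \ref{rema_conv_nabla}); the resulting weak--strong pairing therefore converges to $(\nabla\bar m,\nabla(\bar m\times\psi))_{2,x}$. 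The $\theta$-term vanishes thanks to the strong $L^2$-convergence $\nabla\bar v_N\to 0$ provided by \eqref{estim_nabla_v_2_n}; the stochastic sum passes to the limit by dominated convergence using $|\bar m_N|\le 1$ and \eqref{G_hypothesis} (which, via the Sobolev embedding $H^2\hookrightarrow L^\infty$, gives $\sum_i\|G_i\|_{\infty,x}^2<\infty$); the left-hand side converges by the same weak--strong principle applied to $(\bar w_N,\bar m_N)$. One thus obtains, for every smooth $\psi$,
\[(\mathrm{Id}+\bar m\times)\bar w=2\,\bar m\times\Delta\bar m+\tfrac12\sum_i\bigl(\bar m\times Y_i+\Pi_{\bar m^\perp}Y_i\bigr),\qquad Y_i:=(\bar m\times G_i)\times G_i,\]
where $\bar m\times\Delta\bar m$ is understood as $\nabla\cdot(\bar m\times\nabla\bar m)\in H^{-1}$; since $(\mathrm{Id}+\bar m\times)\bar w\in L^2$ and the remaining contributions are $L^2$, this identity holds in $L^2$.

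Finally, because $\bar w\cdot\bar m=0$, one checks the pointwise identity $(\mathrm{Id}-\bar m\times)(\mathrm{Id}+\bar m\times)\bar w=2\bar w$. Applying $\tfrac12(\mathrm{Id}-\bar m\times)$ to the displayed identity, and using $\bar m\times(\bar m\times\Delta\bar m)=-\Delta\bar m-|\nabla\bar m|^2\bar m$ (a consequence of $\bar m\cdot\Delta\bar m=-|\nabla\bar m|^2$, which follows from $|\bar m|=1$), together with $\bar m\times(\bar m\times Y_i)=-\Pi_{\bar m^\perp}Y_i$ and $\bar m\times\Pi_{\bar m^\perp}Y_i=\bar m\times Y_i$, a short calculation produces the announced formula
\[\bar w=\Delta\bar m+\bar m|\nabla\bar m|^2+\bar m\times\Delta\bar m+\tfrac12\Pi_{\bar m^\perp}\sum_i(\bar m\times G_i)\times G_i,\]
and simultaneously shows that the $L^2$-combination $\Delta\bar m+\bar m|\nabla\bar m|^2$ is well-defined.
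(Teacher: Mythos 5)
Your algebraic organisation differs slightly from the paper's, but in an inessential way: you simplify the discrete left-hand side right away, using $((\mathrm{Id}-\bar m_N\times)\bar w_N)\cdot(\bar m_N\times\psi)=-((\mathrm{Id}+\bar m_N\times)\bar w_N)\cdot\psi$ (valid since $\bar w_N\perp\bar m_N$ and $|\bar m_N|=1$), and thus identify $(\mathrm{Id}+\bar m\times)\bar w$ directly; the paper instead first records the limit of $((\mathrm{Id}-\bar m_N\times)\bar w_N,\bar m_N\times\Phi_N)$, and only afterwards (Step~3) substitutes $\Phi=\bar m\times\Xi$ and uses $(\mathrm{Id}-\bar m\times)\bar w\perp\bar m$. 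Both routes rely on the same three ingredients: the triple-product cancellation $\partial_k\bar m_N\cdot(\partial_k\bar m_N\times\cdot)=0$ to turn the Dirichlet term into a weak--strong pairing, the bound on $\nabla\bar v_N$ to kill the $\theta$-term, and inversion of $\mathrm{Id}\mp\bar m\times$ on $\bar m^\perp$ at the end.

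There is, however, a genuine gap: you take $\psi\in C^\infty([0,T]\times\overline D)^3$ \emph{deterministic}. After taking expectation, the limiting identity you obtain reads, for all such $\psi$,
\begin{equation*}
\mathbb{E}\!\int_0^T\!\bigl((\mathrm{Id}+\bar m\times)\bar w-\text{RHS},\psi\bigr)_{2,x}\,dt=0,
\end{equation*}
which only yields $\mathbb{E}\bigl[(\mathrm{Id}+\bar m\times)\bar w-\text{RHS}\bigr]=0$ for a.e.\ $(t,x)$; it does \emph{not} give the pointwise identity $(\mathrm{Id}+\bar m\times)\bar w=\text{RHS}$ a.s., which you then invert. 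To identify $\bar w$ as an element of $L^2(\bar\Omega\times[0,T]\times D)^3$ you must test against an $\omega$-dependent class dense in that space; this is precisely why the paper works with $\Phi\in\mathcal{C}([0,T];L^\infty(\bar\Omega;W^{1,\infty}_x))$ and later $\Xi\in L^2(\bar\Omega\times[0,T];W^{1,\infty}(D))^3$. Note that the variational problem \eqref{pv_bar} is posed $\omega$-wise, so random test functions are admissible; and the strong convergence of $\bar m_N$ and the weak convergence of $\nabla\bar m_N$ both hold in $L^2(\bar\Omega\times[0,T]\times D)$, so the weak--strong pairing $-(\nabla\psi,\bar m_N\times\nabla\bar m_N)_{2,x}$ passes to the limit just as well when $\psi$ is a bounded random field. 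In short, replace $C^\infty([0,T]\times\overline D)^3$ by, say, $L^\infty(\bar\Omega;C^1([0,T]\times\overline D))^3$ and the rest of your argument is unchanged; without that replacement, the conclusion is not justified.
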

\begin{corollary}\label{cor_bar_w}
 Up to a subsequence, for any $t\in[0,T]$,
 $\displaystyle\sum\limits_{0\leq(n+1){\Delta t}\leq t}{\Delta t}\hspace{0.1cm}{\bar w}_N^n$ converges weakly in $L^2(\bar\Omega\times[0,T]\times D)^3$ to
\begin{multline*}
\int_0^t\Bigg(\Delta\bar m(s) + {\bar m}(s)|\nabla {\bar m}(s)|^2+\bar m(s)\times\Delta\bar m(s) \\+\frac{1}{2}\Pi_{{\bar m(s)}^\perp}\sum_{i\in\mathbb{N}}({\bar m}(s)\times G_i)\times G_i\Bigg)\ds\,.
\end{multline*}
\end{corollary}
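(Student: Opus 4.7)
The plan is to deduce the corollary directly from the weak convergence provided by Proposition \ref{pro_bar_w}, without requiring any further extraction of subsequence. Conceptually, the map $f\mapsto\bigl(t\mapsto\int_0^t f(s)\ds\bigr)$ is a bounded linear operator on $L^2(\bar\Omega\times[0,T]\times D)^3$, and therefore commutes with weak limits; the only genuine thing to check is that replacing the continuous upper limit $t$ by its discrete counterpart does not spoil this.

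Setting $\tau_N(t):=\Delta t\,\lfloor t/\Delta t\rfloor$ and recalling the definition \eqref{def_bar_w} of $\bar w_N$, I would first rewrite
\[\Psi_N(t):=\sum_{0\leq(n+1)\Delta t\leq t}\Delta t\,\bar w_N^n=\int_0^{\tau_N(t)}\bar w_N(s)\ds=\int_0^T\mathbf{1}_{\{s\leq\tau_N(t)\}}\,\bar w_N(s)\ds,\]
and denote by $\Psi(t):=\int_0^t\bar w(s)\ds$ the candidate limit. The target then becomes $\Psi_N\rightharpoonup\Psi$ weakly in $L^2(\bar\Omega\times[0,T]\times D)^3$; once this is proved, the explicit expression of $\bar w$ supplied by Proposition \ref{pro_bar_w} yields exactly the right-hand side announced in the statement.

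For any test function $\Phi\in L^2(\bar\Omega\times[0,T]\times D)^3$, Fubini's theorem gives
\[\int_0^T\mathbb{E}\bigl[(\Psi_N(t),\Phi(t))_{2,x}\bigr]\dt=\int_0^T\mathbb{E}\bigl[(\bar w_N(s),\tilde\Phi_N(s))_{2,x}\bigr]\ds,\]
with $\tilde\Phi_N(s):=\int_0^T\mathbf{1}_{\{s\leq\tau_N(t)\}}\Phi(t)\dt$. Since $0\leq t-\tau_N(t)\leq\Delta t$, a short Cauchy--Schwarz computation yields $\|\tilde\Phi_N-\tilde\Phi\|_{L^2(\bar\Omega\times[0,T]\times D)^3}\leq\Delta t\,\|\Phi\|_{L^2(\bar\Omega\times[0,T]\times D)^3}$ where $\tilde\Phi(s):=\int_s^T\Phi(t)\dt$, so $\tilde\Phi_N\to\tilde\Phi$ strongly. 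Splitting $(\bar w_N,\tilde\Phi_N)_{2,x}=(\bar w_N,\tilde\Phi)_{2,x}+(\bar w_N,\tilde\Phi_N-\tilde\Phi)_{2,x}$, the first piece converges to $(\bar w,\tilde\Phi)_{2,x}$ after integration in $s$ and expectation by Proposition \ref{pro_bar_w}, and a further Fubini returns $\int_0^T\mathbb{E}[(\Psi(t),\Phi(t))_{2,x}]\dt$; the second piece is controlled by $\|\bar w_N\|_{L^2}\|\tilde\Phi_N-\tilde\Phi\|_{L^2}$ and vanishes thanks to the uniform bound \eqref{estim_w_2_n}.

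I do not expect any significant obstacle here: the analytic content of the corollary is really packaged inside Proposition \ref{pro_bar_w}, and the one delicate point, namely that the upper summation limit is $\tau_N(t)$ rather than $t$, is neutralized by the $O(\Delta t)$ $L^2$ estimate on $\tilde\Phi_N-\tilde\Phi$ together with the weak--strong pairing.
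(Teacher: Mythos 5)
Your argument is sound, and the core mechanism is the same one the paper uses: exploit the weak convergence $\bar w_N\rightharpoonup\bar w$ from Proposition~\ref{pro_bar_w} through a weak--strong pairing, and absorb the discrepancy between the discrete upper limit $\tau_N(t)=\Delta t\lfloor t/\Delta t\rfloor$ and $t$ into an $O(\Delta t)$ term controlled by the uniform bound~\eqref{estim_w_2_n}. Your Fubini computation and the estimate $\|\tilde\Phi_N-\tilde\Phi\|_{L^2}\le\Delta t\|\Phi\|_{L^2}$ are both correct.

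The one point worth flagging is a mismatch in the exact formulation. You prove weak convergence, in $L^2(\bar\Omega\times[0,T]\times D)^3$, of the full $t$-indexed family $\Psi_N$; this is a literal reading of the printed statement. However, the phrase ``for any $t\in[0,T]$'' and, above all, the way the corollary is invoked in the proof of Corollary~\ref{cor_id_bar_X} make it clear that the intended conclusion is the \emph{fixed-}$t$ statement: for each (or a.e.) $t$, $\sum_{0\le(n+1)\Delta t\le t}\Delta t\,\bar w_N^n\rightharpoonup\int_0^t\bar w(s)\ds$ weakly in $L^2(\bar\Omega\times D)^3$ --- the space appearing in the statement is almost certainly a typo. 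The paper's own proof is precisely this and is even shorter than yours: fix $t$, take $\Phi\in L^2(\bar\Omega\times D)^3$, note that $\mathbf{1}_{[0,t]}(s)\Phi(\omega,x)$ is an admissible test function for Proposition~\ref{pro_bar_w}, and observe that the remainder $\mathbb{E}\big[\int_{\tau_N(t)}^t(\bar w_N(s),\Phi)_{2,x}\ds\big]\to0$ by Cauchy--Schwarz and~\eqref{estim_w_2_n}. Your version of the conclusion is not directly interchangeable with the fixed-$t$ one (weak convergence of the family in $L^2(\bar\Omega\times[0,T]\times D)^3$ does not by itself give weak convergence in $L^2(\bar\Omega\times D)^3$ for a.e.\ $t$), although with the strong a.e.-$t$ convergences available for the other terms in \eqref{tautology} one could still reach Corollary~\ref{cor_id_bar_X}. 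If you target the fixed-$t$ statement from the start, the argument becomes the one-liner the paper gives.
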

\begin{proof}[Proof of Proposition \ref{pro_bar_w}.]
Thanks to \eqref{estim_w_2_n}, the equality of the laws of $w_N$ and $\bar w_N$, and Alaoglu theorem, we can assume that up to a subsequence, $\bar w_N$ converges weakly to a limit $\bar w$ in $L^2(\bar\Omega\times[0,T]\times D)^3$.
Because of the strong convergence of $\bar m_N$ to $\bar m$ in $L^2(\bar\Omega\times[0,T]\times D)^3$, one has also~:
\begin{equation}
\label{weak_conv}
(\mathrm{Id}-\bar m_N\times)\bar w_N\underset{N\to\infty}{\rightharpoonup}(\mathrm{Id}-\bar m\times)\bar w\quad \text{weakly in }L^2(\bar\Omega\times[0,T]\times D)^3\,.
\end{equation}
let us first identify the limit of $(\mathrm{Id}-\bar m_N\times)\bar w_N$.\bigskip\\
 \textit{\\Step 1~: let us prove that $(\mathrm{Id}-\bar m\times)\bar w(t,x)\perp {\bar m(t,x)}$ a.s.\ } \\
 By definition of $\bar w_N$,  almost surely, and for almost every $(t,x)\in[0,T]\times D$, one has $ {\bar w}_N(\omega,t,x)\cdot  {\bar m}_N(\omega,t,x)=0$.
Thus for any $\mathbb{R}$-valued test function $\phi\in L^\infty(\bar\Omega\times[0,T]\times D)$, one has
$$\mathbb{E}\left[\int_0^T\int_D({\bar w}_N\cdot{\bar m}_N)\phi\dx\dt\right]=0.$$
On the other hand, by weak convergence of $\bar w_N$ and strong convergence of $\bar m_N\phi$, 
$$ \mathbb{E}\left[\int_0^T\int_D({\bar w}_N\cdot{\bar m}_N)\phi\dx\dt\right] \underset{N\to\infty}{\longrightarrow}\mathbb{E}\left[\int_0^T\int_D({\bar w}\cdot {\bar m})\phi\dx\dt\right]\,.$$
Thus, 
${\bar m}(\omega,t,x)\cdot\bar w(\omega,t,x) = 0,$ for almost all $(\omega,t,x)$, and  $(\mathrm{Id}-\bar m\times)\bar w\perp\bar m$.\bigskip\\
\textit{Step 2~: identification of the limit for specific test functions.}\\
We use the definition of ${\bar w}_N$ \eqref{def_bar_w}, and \eqref{pv}.
Let us take
$$\Phi\in \mathcal{C}\left([0,T];L^\infty(\bar\Omega ; W^{1,\infty}_x))\right)\,,$$ 
and consider a test function of the form
\begin{equation}\label{Phi}
{\bar m}(\omega,t,x)\times\Phi(\omega,t,x)\,,\quad\omega\in\bar \Omega\,,t\in[0,T]\,,x\in D\,.
\end{equation}
We approximate this test function by the sequence of piecewise constant functions $({\bar m}_N\times\Phi_N)$,
where we set for all $N\in\mathbb{N}^*$, all $0\leq n\leq N$, and for $t\in[n{\Delta t},(n+1){\Delta t})$,
$$\Phi_N(\omega,t,x)=\Phi_N^n(\omega,x):=\Phi(\omega,n\Delta t,x)\,.$$
On the one hand, using the strong convergence of $\bar m_N$ to $\bar m$ in $L^2(\bar\Omega\times[0,T]\times D)^3$,  and \eqref{estim_nabla_u_2_n} we have :
 \begin{equation}\label{strong_convergence_m_phi}
 {\bar m}_N\times\Phi_N\underset{N\to\infty}{\longrightarrow}{\bar m}\times\Phi\quad\text{strongly in }L^2(\bar\Omega\times[0,T]\times D)^3\,,
\end{equation}
\begin{equation}\label{bornitude_m_phi}
 \nabla({\bar m}_N\times\Phi_N)\text{ is bounded in }L^2(\bar\Omega\times[0,T]\times D)^{3\times3} \text{ uniformly in }N\,,
\end{equation}
and for any $k\in\{1,2,3\}$,
\begin{equation}\label{strong_convergence_m_Dphi}
 {\bar m}_N\times\partial_{x_k}\Phi_N\underset{N\to\infty}{\longrightarrow}{\bar m}\times\partial_{x_k}\Phi\quad\text{strongly in }L^2(\bar\Omega\times[0,T]\times D)^3\,.
\end{equation}
On the other hand, almost surely, $(\bar m_N\times \Phi_N) \in  \mathbb{W}_{N,n}$, and is therefore a suitable test 
function in the variational formulation \eqref{pv_bar}. Using then \eqref{def_bar_w} and the definition of $\bar A_N^n$, and summing on $n\in\{0,\dots,N-1\}$, one obtains~:
\begin{flalign}
\nonumber\mathbb{E}\Big[\int_0^T&\big( (\mathrm{Id}-\bar m_N\times){\bar w}_N\,,\,{\bar m}_N\times\Phi_N\big)_{2,x}\dt\Big] &\\
\label{identite_particuliere_1}=&-2\theta\mathbb{E}\left[\int_0^T\big(\nabla  {\bar v}_N\,,\,\nabla({\bar m}_N\times\Phi_N)\big)_{2,x}\dt\right]\\
\nonumber&-2\mathbb{E}\left[\int_0^T\big(\nabla {\bar m}_N\,,\,\nabla({\bar m}_N\times\Phi_N)\big)_{2,x}\dt\right]\\
\nonumber&+\frac{1}{2}\mathbb{E}\left[\int_0^T\sum_{i\in\mathbb{N}}\big((\mathrm{Id}-{\bar m}_N\times)\big(({\bar m}_N\times G_i)\times G_i\big)\,,\,{\bar m}_N\times\Phi_N\big)_{2,x}\dt\right]\,.
\end{flalign}

The first term in the right hand side above converges to zero, because of \eqref{estim_v_2_n}, and \eqref{bornitude_m_phi}.
For the second term,
we observe that since for all $k=1,2,3$, 
$$\partial_{x_k}\bar m_N^n\cdot(\partial_{x_k}\bar m_N^n\times \Phi_N^n)=0\,,$$
then
\begin{multline*}
 2\mathbb{E}\left[\int_0^T\big( \nabla {\bar m}_N\,,\,\nabla({\bar m}_N\times\Phi_N)\big)_{2,x}\dt\right]\\ =2\mathbb{E}\left[\int_0^T\big(\sum_{k=1,2,3}\partial_{x_k}{\bar m}_N, {\bar m}_N\times\partial_{x_k}\Phi_N\big)_{2,x}\dt\right]\,.
\end{multline*}
By \eqref{strong_convergence_m_Dphi}, and the weak convergence of $\nabla\bar m_N$ to $\nabla\bar m$ in $L^2(\bar\Omega\times[0,T]\times D)^3$ (see Remark \ref{rema_conv_nabla}) , this tends to $2\mathbb{E}\left[\int_0^T\sum_{k=1,2,3}\big(\partial_{x_k}{\bar m}\,,\, {\bar m}\times\partial_{x_k}\Phi\big)_{2,x}\dt\right]$ as $N\to\infty$\,.

Eventually, it easily follows from assumption \eqref{G_hypothesis}, the Sobolev embedding $H^2(D)^3\subset L^{\infty}(D)^3$, 
and the boundedness of the sequence $(\bar m_N)_N$ in $L^{\infty}(\bar \Omega\times[0,1]\times D)^3$ and \eqref{strong_convergence_m_phi}
that the third term of the right hand side of \eqref{identite_particuliere_1} converges strongly in $L^2(\bar\Omega\times[0,T]\times D)^3$
to 
$$
\frac{1}{2}\sum_{i\in\mathbb{N}}\mathbb{E}\left[\int_0^T\big((\mathrm{Id}-{\bar m}\times)\big(({\bar m}\times G_i)\times G_i\big)\,,\,\bar m\times\Phi\big)_{2,x}\dt\right]\,.
$$

Identifying all the limits in the right hand side of \eqref{identite_particuliere_1}, we get~:
\begin{multline}\label{identite_particuliere_00}
\mathbb{E}\left[\int_0^T\big((\mathrm{Id}-\bar m\times)\bar w,{\bar m}\times\Phi\big)_{2,x}\dt\right] = -2\mathbb{E}\left[\int_0^T\big(\nabla {\bar m},\nabla({\bar m}\times\Phi)\big)_{2,x}\dt\right]\\
+\frac{1}{2}\sum_{i\in\mathbb{N}}\mathbb{E}\left[\int_0^T\big((\mathrm{Id}-{\bar m}\times)\big(({\bar m}\times G_i)\times G_i\big)\,,\,\bar m\times\Phi\big)_{2,x}\dt\right]\,.
\end{multline}
By a density argument,  \eqref{identite_particuliere_00} remains true for any $\Phi\in L^2(\bar\Omega\times[0,T];H^1(D))^3$.\bigskip\\
\textit{Step 3~: identification of the limit for any test function.}\\
We are going to use \eqref{identite_particuliere_00} with $$\Phi:=\bar m\times\Xi\,,$$
where $\Xi\in L^2(\bar\Omega\times[0,T];W^{1,\infty}(D))^3$ 
and thus $\Phi\in L^2(\bar\Omega\times[0,T];H^1(D))^3$.
First, observe that for any unit vector $V\in\mathbb{S}^2$, one has
\begin{equation}\label{id_linear}
 {V}\times({V}\times\cdot)=-\Pi_{{V}^\perp},
\end{equation} 
where $\Pi_{{V}^\perp}$ denotes the orthogonal projection on ${V}^\perp$, hence from Step 1,
\begin{equation}\label{double_prod_1}
 \big((\mathrm{Id}-\bar m\times){\bar w}\big)\cdot\bar m\times({\bar m}\times\Xi)=- \big((\mathrm{Id}-\bar m\times){\bar w}\big)\cdot\Xi\,.
\end{equation}
Moreover, for any $1\leq k\leq3$, since $\bar m\cdot \partial_k\bar m=0$, one has
\begin{align}
\nonumber(\partial_{x_k} {\bar m}\times {\bar m})\cdot\partial_{x_k}({\bar m}\times\Xi)&=(\partial_{x_k} {\bar m}\times {\bar m})\cdot\left((\partial_{x_k}{\bar m}\times\Xi)+({\bar m}\times \partial_{x_k}\Xi)\right) \\
\label{double_prod_2}&=|\partial_{x_k}{\bar m}|^2{\bar m}\cdot\Xi-\partial_{x_k} {\bar m}\cdot\partial_{x_k}\Xi\,.
 \end{align}
 Using \eqref{double_prod_1} and \eqref{double_prod_2} in \eqref{identite_particuliere_00} with $\Phi:=\bar m\times\Xi$, we obtain~:
\begin{flalign}
\nonumber-\mathbb{E}\Big[\int_0^T\big((\mathrm{Id}-&\bar m\times)\bar w,\Xi\big)_{2,x}\dt\Big]\\
\label{identite_particuliere_11}=&2\mathbb{E}\Big[\int_0^T\big(\nabla {\bar m},\nabla\Xi\big)_{2,x}\dt\Big]-2\mathbb{E}\Big[\int_0^T\big(|\nabla {\bar m}|^2{\bar m},\Xi\big)_{2,x}\dt\Big]\\
\nonumber&-\frac{1}{2}\mathbb{E}\Big[\int_0^T\big(\Pi_{{\bar m}^\perp}\Big[\sum_{i\in\mathbb{N}}(\mathrm{Id}-{\bar m}\times)\big(({\bar m}\times G_i)\times G_i\big)\Big],\Xi\big)_{2,x}\dt\Big]\,,
\end{flalign}
from which we deduce that
$$(\mathrm{Id}-\bar m\times )\bar w=2\big(\Delta\bar m +|\nabla\bar m|^2\bar m\big) +\frac{1}{2}\Pi_{{\bar m}^\perp}\Big[\sum_{i\in\mathbb{N}}(\mathrm{Id}-{\bar m}\times)\big(({\bar m}\times G_i)\times G_i\big)\Big]$$
in $L^2(\bar\Omega\times[0,T]\times D)^3$.
\bigskip\\
\textit{Step 4 : end of the proof.\\}
Note that if $V\cdot \bar m=0$, then
$$
(\mathrm{Id}-{\bar m}\times)^{-1}V=\frac12 (\mathrm{Id}+{\bar m}\times)V\,,
$$
so that
\begin{align*}
\bar w=\Delta\bar m+{\bar m}|\nabla {\bar m}|^2+{\bar m}\times\Delta\bar m+\frac{1}{2}\Pi_{{\bar m}^\perp}\sum_{i\in\mathbb{N}}({\bar m}\times G_i)\times G_i\,,
\end{align*}
and Proposition \ref{pro_bar_w} is proved.
\end{proof}
\begin{proof}[Proof of Corollary \ref{cor_bar_w}]
It is an immediate consequence of Proposition \ref{pro_bar_w}, and the fact that
\begin{equation*}
 \mathbb{E}\left[\int_{\left[\frac{t}{\Delta t}\right]\Delta t}^t\big(\bar w_N(s),\Phi\big)_{2,x}\ds\right]\quad \text{tends to }0
\end{equation*}
 as $N\to\infty$, for any $\Phi\in L^2(\bar \Omega \times D)^3$, thanks to \eqref{estim_w_2_n}.
\end{proof}
\begin{proposition}\label{pro_id_bar_X}
For almost every $t\in[0,T]$,
\begin{equation*}
\sum_{0\leq(n+1){\Delta t}\leq t}\big({\bar m}_N^{n+1}-{\bar m}_N^n-{\bar v}_N^n\big)\end{equation*}
converges strongly in $L^1(\bar\Omega\times D)^3$ to 
\begin{equation*}
\frac{1}{2}\int_0^t\Pi_{\bar m}\left[\sum_{i\in\mathbb{N}}(\bar m\times G_i)\times G_i\right]\,.
\end{equation*}
\end{proposition}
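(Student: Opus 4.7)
The strategy is to Taylor-expand the renormalization step and to identify its leading quadratic correction as the missing piece of the It\^o drift. Since $\bar v_N^n\perp\bar m_N^n$ and $|\bar m_N^n|=1$ almost everywhere, one has $|\bar m_N^n+\bar v_N^n|^2=1+|\bar v_N^n|^2$, hence
\begin{equation*}
\bar m_N^{n+1}-\bar m_N^n-\bar v_N^n=(\bar m_N^n+\bar v_N^n)\bigl((1+|\bar v_N^n|^2)^{-1/2}-1\bigr)=-\tfrac{1}{2}|\bar v_N^n|^2\bar m_N^n+r_N^n,
\end{equation*}
where a direct analysis of the Taylor remainder gives $|r_N^n|\leq C|\bar v_N^n|^3$ pointwise. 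The plan is then to (i) show $\sum_n r_N^n\to 0$ in $L^1(\bar\Omega\times D)^3$; (ii) substitute $\bar v_N^n=\bar A_N^n+\Delta t\,\bar w_N^n$ to replace $|\bar v_N^n|^2$ by $|\bar A_N^n|^2$ up to negligible corrections; (iii) compensate $|\bar A_N^n|^2$ by its conditional expectation modulo a vanishing martingale term; and (iv) pass to the limit in the resulting Riemann sum.

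For (ii), the cross contributions $\Delta t\,\bar A_N^n\cdot\bar w_N^n$ and $(\Delta t)^2|\bar w_N^n|^2$ vanish in $L^1(\bar\Omega\times D)^3$ by Cauchy--Schwarz together with the energy bounds \eqref{estim_w_2_n} and \eqref{estim_A_2}, which in particular yield $\sum_n\mathbb{E}\|\Delta t\,\bar w_N^n\|_{L^2}^2=O(\Delta t)$. For (iii), writing $G\Delta\bar W_N^n=\sum_{i\in\mathbb{N}}G_i\Delta\bar\beta_i^n$ with independent centered Gaussian increments of variance $\Delta t$, a direct computation gives
\begin{equation*}
\mathbb{E}\bigl[|\bar A_N^n(x)|^2\bigm|\mathbb{\bar F}_N^n\bigr]=\Delta t\sum_{i\in\mathbb{N}}|\bar m_N^n(x)\times G_i(x)|^2,
\end{equation*}
and the orthogonality of the resulting martingale differences combined with Gaussian fourth-moment bounds produces an $L^2(\bar\Omega\times D)$-error of order $\sqrt{N\cdot\Delta t^2}=O(\sqrt{\Delta t})$ for the compensating sum, which survives multiplication by the bounded factor $\bar m_N^n$.

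For (iv), the strong convergence $\bar m_N\to\bar m$ in $L^2(\bar\Omega\times[0,T]\times D)^3$, the pointwise bound $|\bar m_N|=1$, and the Sobolev estimate $\sum_i\|G_i\|_{L^\infty(D)}^2\leq C\|G\|_{2,2}^2$ coming from $H^2\hookrightarrow L^\infty$ allow dominated convergence to identify the limit of $\sum_n\Delta t\sum_i|\bar m_N^n\times G_i|^2\bar m_N^n$ in $L^1(\bar\Omega\times D)^3$ with $\int_0^t\sum_i|\bar m(s)\times G_i|^2\bar m(s)\,ds$. Finally, for a unit vector $V$, the identity $(V\times G_i)\times G_i=(V\cdot G_i)G_i-|G_i|^2 V$ gives $\Pi_V[(V\times G_i)\times G_i]=((V\cdot G_i)^2-|G_i|^2)V=-|V\times G_i|^2 V$, so that $-\tfrac{1}{2}\sum_i|\bar m\times G_i|^2\bar m=\tfrac{1}{2}\Pi_{\bar m}\sum_i(\bar m\times G_i)\times G_i$, which is the target. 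The main technical obstacle will be step (i): since $H^1\not\hookrightarrow L^\infty(D)$ in three dimensions, the needed estimate $\sum_n\mathbb{E}\|\bar v_N^n\|_{L^3}^3\to 0$ is not immediate, and will rely on the Gagliardo--Nirenberg inequality $\|u\|_{L^3}^3\leq C\|u\|_{L^2}^{3/2}\|u\|_{H^1}^{3/2}$ together with the Gaussian $L^\infty(D)$-smallness of $\bar A_N^n$ and a careful interpolation balancing the $L^2$-smallness of order $\Delta t$ of $\Delta t\,\bar w_N^n$ against its $H^1$-control.
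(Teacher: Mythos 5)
Your overall architecture (Taylor-expand the renormalization, replace $|\bar v_N^n|^2$ by $|\bar A_N^n|^2$, compensate by the conditional expectation, pass to the limit) is the same as the paper's, and your steps (ii)--(iv) are sound: the identity $\mathbb{E}[|\bar A_N^n|^2\,|\,\bar{\mathbb F}_N^n]=\Delta t\sum_i|\bar m_N^n\times G_i|^2$, the Gaussian fourth-moment control of the compensated martingale, and the projection identity $\Pi_V\bigl[(V\times G_i)\times G_i\bigr]=-|V\times G_i|^2V$ all match the paper. The problem is step (i), and you are right to flag it as the main obstacle, but the fix you propose does not close it. The pointwise bound $|r_N^n|\le C|\bar v_N^n|^3$ forces you to control $\sum_n\mathbb{E}\|\bar v_N^n\|_{L^3(D)}^3$, and the only available estimates on $\bar v_N^n$ are $\sum_n\mathbb{E}\|\bar v_N^n\|_{2,x}^2\le C$ and $\sum_n\mathbb{E}\|\nabla\bar v_N^n\|_{2,x}^2\le C$, both only $O(1)$. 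Gagliardo--Nirenberg gives $\|\bar v_N^n\|_{L^3}^3\le C\|\bar v_N^n\|_{2,x}^{3/2}\|\bar v_N^n\|_{H^1}^{3/2}$, but to sum this over $n$ and in expectation you would need either $L^3$- (or higher) moments in $(n,\omega)$ of $\|\bar v_N^n\|_{H^1}$ or $\|\bar v_N^n\|_{2,x}$, none of which the energy estimates provide; a few large realizations of $\bar v_N^n$ concentrated on a small set are compatible with the $L^2$ bounds and would make $\|\bar v_N^n\|_{L^3}^3$ order one, so the claimed smallness is genuinely unavailable.

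The paper circumvents this by never isolating a pure $|\bar v_N^n|^3$ term. It sets $R_N^n=\bar m_N^{n+1}-\bar m_N^n-\bar v_N^n+\tfrac12|\bar A_N^n|^2\bar m_N^n$ (note: $\bar A_N^n$, not $\bar v_N^n$, in the quadratic correction) and decomposes
\begin{align*}
R_N^n &= \bar m_N^n\Bigl(\tfrac{1}{\sqrt{1+|\bar A_N^n|^2}}-1+\tfrac12|\bar A_N^n|^2\Bigr)
       + \bar m_N^n\Bigl(\tfrac{1}{\sqrt{1+|\bar v_N^n|^2}}-\tfrac{1}{\sqrt{1+|\bar A_N^n|^2}}\Bigr)\\
      &\quad+ \bar v_N^n\Bigl(\tfrac{1}{\sqrt{1+|\bar A_N^n|^2}}-1\Bigr)
       + \bar v_N^n\Bigl(\tfrac{1}{\sqrt{1+|\bar v_N^n|^2}}-\tfrac{1}{\sqrt{1+|\bar A_N^n|^2}}\Bigr)\,.
\end{align*}
Each piece is then handled with the tools actually available: the first is a Taylor remainder in $\bar A_N^n$ alone, bounded by $\tfrac12\|\bar A_N^n\|_{4,x}^4$ and controlled by \eqref{estim_A_4}; the second and fourth carry the Lipschitz-type differences, which produce the small factor $\bar v_N^n-\bar A_N^n=\Delta t\,\bar w_N^n$ controlled by \eqref{estim_w}; the third has the Gaussian smallness of $\tfrac12|\bar A_N^n|^2$. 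Cauchy--Schwarz (in $x$ and in $(n,\omega)$, with a weight $\sqrt{\Delta t}$ to balance the two factors) then gives the $L^1$-smallness without ever needing cubic moments of $\bar v_N^n$. In short, your step (i) as formulated would fail; the cure is to correct by $\tfrac12|\bar A_N^n|^2\bar m_N^n$ directly and interpolate both the scalar factor $(1+|\cdot|^2)^{-1/2}$ and the vector prefactor between $\bar A_N^n$ and $\bar v_N^n$, as above.
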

\begin{corollary}\label{cor_id_bar_X}
For almost every $t\in[0,T]$,
\begin{equation}\label{identification_X}
 {\bar X}(t)={\bar m}(t)-{m}_0-\int_0^t\big(\Delta {\bar m} + {\bar m}|\nabla m|^2 +{\bar m}\times\Delta{\bar m}  +\frac{1}{2}\sum_{i\in\mathbb{N}}({\bar m}\times G_i)\times G_i\big)\ds\,,
\end{equation}
and $\bar m\in \mathcal{C}([0,T];L^2(D)^3)$.
\end{corollary}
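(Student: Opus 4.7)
The plan is to pass to the limit in the tautological identity \eqref{tautology} term by term along a suitable subsequence. First, by \eqref{conv_egoroff} together with Corollary \ref{cor_skorohod}, one has $\bar X_N\to\bar X$ and $\bar m_N\to\bar m$ strongly in $L^2(\bar\Omega\times[0,T]\times D)^3$, so after extracting a further subsequence one obtains, for almost every $t\in[0,T]$, $\bar X_N(t)\to\bar X(t)$ and $\bar m_N(t)\to\bar m(t)$ strongly in $L^2(\bar\Omega\times D)^3$. This handles the first two terms of \eqref{tautology}, which together produce $\bar m(t)-m_0$ on the right-hand side, matched by the strong limit $\bar X(t)$ on the left.

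For the remaining two sums, I would invoke Corollary \ref{cor_bar_w} for the contribution $\sum\Delta t\,\bar w_N^n$, whose weak limit is the Bochner integral
\begin{equation*}
\int_0^t\Big(\Delta\bar m+\bar m|\nabla\bar m|^2+\bar m\times\Delta\bar m+\tfrac12\Pi_{\bar m^\perp}\sum_{i\in\mathbb{N}}(\bar m\times G_i)\times G_i\Big)\ds,
\end{equation*}
and Proposition \ref{pro_id_bar_X} for the renormalization defect $\sum(\bar m_N^{n+1}-\bar m_N^n-\bar v_N^n)$, whose strong $L^1$-limit is the complementary quantity $\tfrac12\int_0^t\Pi_{\bar m}\big[\sum_i(\bar m\times G_i)\times G_i\big]\ds$. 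By uniqueness of limits, adding the two pieces and using the orthogonal decomposition $\Pi_{\bar m^\perp}+\Pi_{\bar m}=\mathrm{Id}$ precisely reassembles the full Ito correction $\tfrac12\sum_i(\bar m\times G_i)\times G_i$, which yields \eqref{identification_X}.

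For the continuity statement, I would combine \eqref{identification_X} with the stochastic integral representation \eqref{representation_martingale} to write
\begin{equation*}
\bar m(t)=m_0+\int_0^t I(s)\ds+\int_0^t\bar m(s)\times G\ddW(s),
\end{equation*}
where $I$ denotes the drift integrand appearing in \eqref{identification_X}. The deterministic term is absolutely continuous in $t$ with values in $L^2(D)^3$ by the integrability of $I$, and the stochastic integral, being a Hilbert space-valued Ito integral of a square-integrable predictable process, admits a continuous modification. Consequently, $\bar m$ itself has a modification in $\mathcal{C}([0,T];L^2(D)^3)$.

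The main conceptual point, rather than a technical obstacle, is the complementary character of the two limit contributions. The variational formulation \eqref{pv_bar} only tests against fields orthogonal to $\bar m_N^n$, so Corollary \ref{cor_bar_w} captures only the tangential part of the Ito correction; the missing normal part is supplied by the pointwise renormalization step \eqref{renormbar} and is exactly what Proposition \ref{pro_id_bar_X} extracts. That these two pieces fit together to reconstruct the full Ito correction of \eqref{LLG_Ito} is the structural heart of the argument, and is what makes the semi-implicit scheme consistent with the correct Stratonovich-to-Ito conversion.
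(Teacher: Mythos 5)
Your proof follows essentially the same route as the paper: passing to the limit in the tautological identity \eqref{tautology}, invoking Corollary \ref{cor_bar_w} and Proposition \ref{pro_id_bar_X} for the two sums, and using $\Pi_{\bar m^\perp}+\Pi_{\bar m}=\mathrm{Id}$ to reassemble the It\^o correction. The only minor deviation is in the continuity argument, where you go through the stochastic-integral representation \eqref{representation_martingale} instead of directly citing the continuity of $\bar X$ already established in Proposition \ref{pro_martingale_barX}; both give the same conclusion.
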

\begin{proof}[Proof of Proposition \ref{pro_id_bar_X}.]
We set for each $0\leq n\leq N$~:
\begin{equation}\label{nota_RN}
R_N^n :={\bar m}_N^{n+1}-{\bar m}_N^n-{\bar v}_N^n+\frac{1}{2}|{\bar A}_N^n|^2{\bar m}_N^n\,.
\end{equation}
It suffices to prove the following two facts :
\begin{equation}\label{convergence_correction_R}
\sum_{0\leq(n+1){\Delta t}\leq t}R_N^n\underset{N\to\infty}{\longrightarrow} 0
\end{equation}
strongly in $L^1(\bar\Omega\times D)^3$, 
and
\begin{equation}\label{convergence_correction_P}
 \sum_{0\leq(n+1){\Delta t}\leq t}|{\bar m}_N^n\times(G\Delta\bar W_N^n)|^2{\bar m}_N^n\underset{N\to\infty}{\longrightarrow}-\int_0^t\Pi_{\bar m}\left[\sum_{i\in\mathbb{N}}({\bar m}\times{G_i})\times G_i\right]\ds \,,
 \end{equation}
strongly in $L^2(\bar\Omega\times D)^3$.
\bigskip\\
\textit{Proof of \eqref{convergence_correction_R}:\\} Let us decompose $ R_N^n$ into four terms, namely
$$
 R_N^n=\frac{{\bar m}_N^{n}+{\bar v}_N^n}{\sqrt{1+|{\bar v}_N^n|^2}}-{\bar m}_N^n-{\bar v}_N^n+\frac{1}{2}|{\bar A}_N^n|^2{\bar m}_N^n
 = I+II+III+IV
$$
with
\begin{eqnarray*}
\textstyle I:={\bar m}_N^n\left(\frac{1}{\sqrt{1+|{\bar A}_N^n|^2}}-1+\frac{1}{2}|{\bar A}_N^n|^2\right), & \; &
\textstyle II:={\bar m}_N^n\left(\frac{1}{\sqrt{1+|{\bar v}_N^n|^2}}-\frac{1}{\sqrt{1+|{\bar A}_N^n|^2}}\right),\\
\textstyle III:={\bar v}_N^n\left(\frac{1}{\sqrt{1+|{\bar A}_N^n|^2}}-1\right) \mbox{ and } & \; &
\textstyle IV:={\bar v}_N^n\left(\frac{1}{\sqrt{1+|{\bar v}_N^n|^2}}-\frac{1}{\sqrt{1+|{\bar A}_N^n|^2}}\right)\,,
\end{eqnarray*} 
and treat each of them separately.

\noindent
Convergence of $I$: Using 
$$
\big| \frac{1}{\sqrt{1+x^2}}-(1-\frac12 x^2)\big| \leq Cx^4, \; \mbox{for all} \; x \in \mathbb{R},
$$
we get
\begin{flalign*}
\mathbb{E}\Big[\sum_{0\leq (n+1){\Delta t}\leq t}\Big\|{\bar m}_N^n\Big(\frac{1}{\sqrt{1+|{\bar A}_N^n|^2}}-1+\frac{1}{2}&|{\bar A}_N^n|^2\Big)\Big\|_{1,x}\Big]&\\
&\leq \frac{1}{2}\mathbb{E}\Big[\sum_{0\leq (n+1){\Delta t}\leq t} \|{\bar A}_N^n\|_{4,x}^4\Big]&\\
&\leq C\|G\|_{2,1}^4\sum_{0\leq (n+1){\Delta t}\leq t} {\Delta t}^2\,,&
\end{flalign*}
which tends to zero as $N$ tends to infinity.

\noindent
Convergence of $II$:
we have by Cauchy-Schwarz inequality and a H\"older-type inequality~:
\begin{eqnarray*}
& & \mathbb{E}\Big[\sum_{0\leq (n+1){\Delta t}\leq t} \textstyle\Big\| {\bar m}_N^n(\frac{1}{\sqrt{1+|{\bar v}_N^n|^2}}-\frac{1}{\sqrt{1+|{\bar A}_N^n|^2}})\Big\|_{1,x}\Big]\\
&& \hspace{1cm} \leq C\mathbb{E}\Big[\sum_{0\leq (n+1){\Delta t}\leq t}\left\||{\bar v}_N^n|^2-|{\bar A}_N^n|^2\right\|_{1,x}\Big]\\
&& \hspace{1cm}\leq C\mathbb{E}\Big[\sum_{0\leq (n+1){\Delta t}\leq t}\left\|{\bar v}_N^n+{\bar A}_N^n\right\|_{2,x}\left\|{\bar v}_N^n-{\bar A}_N^n\right\|_{2,x}\Big]\\
&& \hspace{1cm}\leq \frac{C}{2\sqrt{\Delta t}}\mathbb{E}\Big[\sum_{0\leq (n+1){\Delta t}\leq t}\|{{\bar v}_N^n-{\bar A}_N^n}\|_{2,x}^2\Big]\\
&&\hspace{1.3cm}+ \frac{C\sqrt{\Delta t}}{2}\mathbb{E}\Big[\sum_{0\leq (n+1){\Delta t}\leq t}\|{{\bar v}_N^n+{\bar A}_N^n}\|_{2,x}^2\Big]\,.
\end{eqnarray*}
We then use \eqref{estim_A_2}, \eqref{estim_w} and \eqref{estim_v} to conclude that $II$ tends to $0$ as $N$ tends to infinity.

\noindent
Convergence of $III$~:
Working as above, one has
\begin{multline*}
\mathbb{E}\Big[\sum_{0\leq(n+1){\Delta t}\leq t}\textstyle \Big\|{\bar v}_N^n\big(\frac{1}{\sqrt{1+|{\bar A}_N^n|^2}}-1\big)\Big\|_{1,x}\Big] \\
\leq\frac{\sqrt{\Delta t}}{2}\Big(\sum_{0\leq(n+1){\Delta t}\leq t}\|{\bar v}_N^n\|_{2,x}^2\Big)+\frac{C}{2\sqrt{\Delta t}}\Big(\sum_{0\leq(n+1){\Delta t}\leq t}\|{\bar A}_N^n\|_{4,x}^4\Big)\,.
\end{multline*}
Then, using again \eqref{estim_v}, and \eqref{estim_A_4}, the above quantity tends to $0$ as $N\to\infty$.

\noindent
Convergence of $IV$~:
Cauchy-Schwarz inequality implies~:
\begin{multline*}
\textstyle \mathbb{E}\Big[\big\|{\bar v}_N^n\big(\frac{1}{\sqrt{1+|{\bar v}_N^n|^2}}-\frac{1}{\sqrt{1+|{\bar A}_N^n|^2}}\big)\big\|_{1,x}\Big]\\
\leq\Big(\mathbb{E}\big[\sum_{0\leq(n+1){\Delta t}\leq t}\|{\bar v}_N^n\|_{2,x}^2\big]\Big)^{\frac{1}{2}}\Big(\mathbb{E}\big[\sum_{0\leq(n+1){\Delta t}\leq t}\|{\bar v}_N^n-{\bar A}_N^n\|_{2,x}^2\big]\Big)^{\frac{1}{2}}\,.
\end{multline*}
Using \eqref{estim_w} and \eqref{estim_v}, we conclude that $IV$ tends to $0$ as $N\to\infty$.
Finally \eqref{convergence_correction_R} is proved.\bigskip\\
\textit{Proof of \eqref{convergence_correction_P}.}\\

For each $N\in\mathbb{N}^*$, and $0\leq n\leq N$, observe that if we denote by $\mathbb{E}[\cdot|\mathbb{\bar F}_N^n]$ the conditional expectation with respect to $\mathbb{\bar F}_N^n$ (see \eqref{discrete_filtration_bar}), we have
\begin{equation}\label{bar_m_times_G_2}
 \mathbb{E}\left[|\bar m_N^n\times(G\Delta\bar W_N^n)|^2\big|\mathbb{F}_N^n\right]=\Delta t\sum_{i\in\mathbb{N}}|\bar m_N^n\times G_i|^2\,.
\end{equation}
We set
\begin{equation*}
 \sum_{0\leq(n+1){\Delta t}\leq t}{\bar m}_N^n|{\bar m}_N^n\times(G\Delta\bar W_N^n)|^2-\int_0^t \sum_{i\in\mathbb{N}}{\bar m}|{\bar m}\times{G_i}|^2\ds =I+II\,,
\end{equation*}
where
\begin{equation*}
I=\sum_{0\leq(n+1){\Delta t}\leq t}{\bar m}_N^n\Big(|{\bar m}_N^n\times(G\Delta\bar W_N^n)|^2-\sum_{i\in\mathbb{N}}|{\bar m}_N^n\times G_i|^2{\Delta t}\Big)\,,
\end{equation*}
and
\begin{equation*}
II=\sum_{0\leq(n+1){\Delta t}\leq t} \sum_{i\in\mathbb{N}} {\bar m}_N^n|{\bar m}_N^n\times{G_i}|^2{\Delta t}-\int_0^t\sum_{i\in\mathbb{N}}{\bar m}|{\bar m}\times{G_i}|^2\ds\,.
\end{equation*}

Let us prove that the first term above tends to zero as $N$ tends to infinity in $L^2(\bar\Omega\times D)^3$.
\begin{equation*}
\mathbb{E}[\|I\|_{2,x}^2]=\mathbb{E}\Big[\big\|\sum_{0\leq(n+1){\Delta t}\leq t}{\bar m}_N^n\big(|{\bar m}_N^n\times(G\Delta\bar W_N^n)|^2-\sum_{i\in\mathbb{N}}|{\bar m}_N^n\times G_i|^2{\Delta t}\big)\big\|_{2,x}^2\Big]
\end{equation*}
Developing the square under the expectation above, we get a sum over two indices $k\leq n$, which contains the following terms~:
\begin{multline*}
\mathbb{E}\Big[\big({\bar m}_N^n(|{\bar m}_N^n\times(G\Delta\bar W_N^n)|^2-\sum_{i\in\mathbb{N}}|{\bar m}_N^n\times G_i|^2{\Delta t})\,,\\
{\bar m}_N^k(|{\bar m}_N^k\times{G\Delta{\bar W}_N^k}|^2-\sum_{i\in\mathbb{N}}|{\bar m}_N^k\times G_i|^2{\Delta t})\big)_{2,x}\Big]\,.
\end{multline*}
When $k<n$, this is equal to 
\begin{multline*}
\mathbb{E}\Big[\big({\bar m}_N^n\mathbb{E}\big[ |{\bar m}_N^n\times(G\Delta\bar W_N^n)|^2-\sum_{i\in\mathbb{N}}|{\bar m}_N^n\times G_i|^2{\Delta t}\big|\bar{\mathbb F}_N^n\big]\,,\\
{\bar m}_N^k(|{\bar m}_N^k\times{G\Delta{\bar W}_N^k}|^2-\sum_{i\in\mathbb{N}}|{\bar m}_N^k\times G_i|^2{\Delta t})\big)_{2,x}\Big]\,.
\end{multline*}
Thus, using \eqref{bar_m_times_G_2}, these terms vanish. It follows, using again \eqref{bar_m_times_G_2}
\begin{eqnarray*}
\mathbb{E}[\|I\|_{2,x}^2]&=&\sum_{0\leq(n+1)\Delta t \leq t}\mathbb{E}\Big[\big\||{\bar m}_N^n\times(G\Delta\bar W_N^n)|^2-\sum_{i\in\mathbb{N}}|{\bar m}_N^n\times G_i|^2{\Delta t}\big\|_{2,x}^2\Big]\\
&=&\sum_{0\leq(n+1)\Delta t\leq t}\mathbb{E}\Big[\int\limits_D|{\bar m}_N^n\times(G\Delta\bar W_N^n)|^4-\big(\sum_{i\in\mathbb{N}}|{\bar m}_N^n\times G_i|^2\big)^2{\Delta t^2}\dx\Big]\,.
\end{eqnarray*}
Both terms on the right hand side are bounded by $C(T)\|G\|_{2,1}^4\Delta t$ (see \eqref{estim_A_4}), hence tend to zero as $N$ tends to infinity.

For $II$, we write
\begin{equation*}
\sum_{0\leq(n+1){\Delta t}\leq t}\sum_{i\in\mathbb{N}}{\bar m}_N^n|{\bar m}_N^n\times{G_i}|^2{\Delta t}=\int_0^{[\frac{t}{{\Delta t}}]{\Delta t}}\sum_{i\in\mathbb{N}}{\bar m}_N| {\bar m}_N\times{G_i}|^2\ds \,,
\end{equation*}
and note that $\int_{{[\frac{t}{{\Delta t}}]{\Delta t}}}^t\sum_{i\in\mathbb{N}}{\bar m}_N| {\bar m}_N\times{G_i}|^2\ds$
tends to zero in $L^2(\bar\Omega\times D)^3$.
Moreover, we have that
\begin{multline}
\mathbb{E}\left[\big\| \int_0^{t} \sum_{i\in\mathbb{N}}{\bar m}_N| {\bar m}_N\times{G_i}|^2\ds -\int_0^t \sum_{i\in\mathbb{N}}{\bar m}|{\bar m}\times{G_i}|^2\ds \big\|_{2,x}\right] \\ \leq C\|G\|_{2,2}^2\mathbb{E}\int_0^T\|\bar m_N-\bar m\|_{2,x}\,\ds
\end{multline}
tends to 0 as $N$ tends to infinity.
We conclude that
$$\displaystyle\sum_{0\leq(n+1){\Delta t}\leq t}\sum_{i\in\mathbb{N}} {\bar m}_N^n | {\bar m}_N^n\times{G_i}|^2{\Delta t}\underset{N\to\infty}{\longrightarrow}\int_0^t \sum_{i\in\mathbb{N}}{\bar m}|{\bar m}\times{G_i}|^2\ds$$
strongly in $L^1(\bar\Omega;L^2( D)^3)$. Eventually,  \eqref{convergence_correction_P} follows from
$$\int_0^t \sum_{i\in\mathbb{N}}{\bar m}|{\bar m}\times{G_i}|^2\ds=-\int_0^t \Pi_{{\bar m}}\sum_{i\in\mathbb{N}}(({\bar m}\times G_i)\times G_i)\ds\,. $$
\end{proof}
\begin{proof}[Proof of Corollary \ref{cor_id_bar_X}.]
We recall that we have for almost every $t\in[0,T]$ and for all $N\in\mathbb{N}^*$
\begin{equation*}
 \bar X_N(t)=\bar m_N(t)-{m}_0-\sum_{0\leq(n+1){\Delta t}\leq t}{\Delta t}{\bar w}_N^n-\sum_{0\leq(n+1){\Delta t}\leq t}({\bar m}_N^{n+1}-{\bar m}_N^n-{\bar v}_N^n)
\end{equation*}
and that, up to the extraction of a subsequence, 
\begin{equation*}
  {\bar m}_N\longrightarrow {\bar m}\quad\text{ strongly in }L^2(\bar\Omega\times[0,T]\times D)^3,
\end{equation*}
and thus without loss of generality, we can assume that for almost any $t\in[0,T]$,
\begin{equation*}
  {\bar m}_N(t)\longrightarrow {\bar m}(t) \quad L^2(\bar\Omega\times D)^3\,.
\end{equation*} 
Then, thanks to Corollary \eqref{cor_bar_w} and Proposition \ref{pro_id_bar_X}, 
\begin{equation*}
-\sum_{0\leq(n+1){\Delta t}\leq t}{\Delta t}{\bar w}_N^n-\sum_{0\leq(n+1){\Delta t}\leq t}({\bar m}_N^{n+1}-{\bar m}_N^n-{\bar v}_N^n)
\end{equation*}
converges weakly in $L^2(\bar\Omega\times D)^3$ to 
\begin{multline*}
 -\int_0^t\left( \Delta\bar m + \bar m|\nabla\bar m|^2+\bar m\times\Delta\bar m+\frac12\Pi_{\bar m^\perp}\left[\sum_{i\in\mathbb{N}}(\bar m \times G_i)\times G_i\right]\right)\ds\\
 - \frac12\int_0^t\left(\Pi_{\bar m}\left[\sum_{i\in\mathbb{N}}(\bar m \times G_i)\times G_i\right]\right)\ds
\end{multline*}
The continuity of $\bar m$ follows from \eqref{identification_X}, the continuity of $\bar X$ (Proposition \ref{pro_martingale_barX}), and the fact that $\bar{w}\in L^1(0,T;L^2(D)^3)$ a.s. (see Proposition \ref{pro_bar_w}).
\end{proof}
\noindent {\em End of proof of Theorem \ref{theo_main_result}. }The conclusion follows, thanks to \eqref{identification_X}, together with \eqref{representation_martingale}~:
there exists a martingale solution $(\tilde\Omega,\mathbb{\tilde P},\mathcal{\tilde F}_{t\in[0,T]},\tilde W,\bar m)$ of \eqref{LLG_Ito}, i.e.\
for any time $t\in[0,T]$,
\begin{multline*}
 {\bar m}(t)={m}_0+\int_0^t\left(\Delta\bar m + {\bar m}|\nabla {\bar m}|^2 +\bar m\times\Delta m +\frac{1}{2}\sum_{i\in\mathbb{N}}({\bar m}\times{G_i})\times{G_i}\right)\ds\\  +\int_0^t {\bar m}\times( G\ddW(s))\,.
\end{multline*}
\qed

\paragraph{Acknowledgements :}Partial funding of this research through the ANR projects Micro-MANIP (ANR-08-BLAN-0199) and STOSYMAP (ANR-2011-BS01-015-03) is gratefully acknowledged.
\bibliographystyle{abbrv}

\begin{thebibliography}{}

\end{thebibliography}


\begin{thebibliography}{10}

\bibitem{ADAMSFOURNIER2003}
R.~A. \textsc{Adams} and J.~J.~F. \textsc{Fournier}.
\newblock {\em Sobolev spaces}, volume 140 of {\em Pure and Applied Mathematics
  (Amsterdam)}.
\newblock Elsevier/Academic Press, Amsterdam, second edition, 2003.

\bibitem{ALOUGES1997}
F.~\textsc{Alouges}.
\newblock A new algorithm for computing liquid crystal stable configurations:
  the harmonic mapping case.
\newblock {\em SIAM Journal on Numerical Analysis}, \textbf{34}(5):1708--1726,
  1997.

\bibitem{alouges2008new}
F.~\textsc{Alouges}.
\newblock A new finite element scheme for {Landau}-{Lifschitz} equations.
\newblock {\em Discrete Contin. Dyn. Syst. Ser. S}, \textbf{1}(2):187--196,
  2008.

\bibitem{alouges2006convergence}
F.~\textsc{Alouges} and P.~\textsc{Jaisson}.
\newblock Convergence of a finite element discretization for the
  {Landau}--{Lifshitz} equations in micromagnetism.
\newblock {\em Mathematical Models and Methods in Applied Sciences},
  \textbf{16}(02):299--316, 2006.

\bibitem{ALOUGESSOYEUR1991}
F.~\textsc{Alouges} and A.~\textsc{Soyeur}.
\newblock On global weak solutions for {Landau}-{Lifshitz} equations :
  existence and nonuniqueness.
\newblock {\em SIAM journal on numerical analysis}, \textbf{44}(4):1405--1419,
  2006.

\bibitem{ASSOUAD1975}
P.~\textsc{Assouad}.
\newblock Espaces $ p $-lisses et $ q $-convexes. in{\'e}galit{\'e}s de
  {B{\"u}rkholder}.
\newblock {\em S{\'e}minaire d'Analyse fonctionnelle}, pages 1--7, 1975.

\bibitem{banas2013stochastic}
L.~\textsc{Banas}, Z.~\textsc{Brze{\'z}niak}, M.~\textsc{Neklyudov}, and
  A.~\textsc{Prohl}.
\newblock {\em Stochastic Ferromagnetism--Analysis and Numerics}.
\newblock De Gruyter, 2013.

\bibitem{bartels2008numerical}
S.~\textsc{Bartels}, J.~\textsc{Ko}, and A.~\textsc{Prohl}.
\newblock Numerical analysis of an explicit approximation scheme for the
  {Landau}-{Lifshitz}-{Gilbert} equation.
\newblock {\em Mathematics of Computation}, \textbf{77}(262):773--788, 2008.

\bibitem{BANASBRZEZNIAKPROHLNEKLYUDOV2013}
L.~\textsc{Ba{\v{n}}as}, Z.~\textsc{Brze{\'z}niak}, M.~\textsc{Neklyudov}, and
  A.~\textsc{Prohl}.
\newblock A convergent finite-element-based discretization of the stochastic
  {Landau}--{Lifshitz}--{Gilbert} equation.
\newblock {\em IMA Journal of Numerical Analysis}, 2013.

\bibitem{banas2013computational}
L.~\textsc{Ba{\v{n}}as}, Z.~\textsc{Brze\'zniak}, and A.~\textsc{Prohl}.
\newblock Computational studies for the stochastic
  {Landau}--{Lifshitz}--{Gilbert} equation.
\newblock {\em SIAM Journal on Scientific Computing}, \textbf{35}(1):B62--B81,
  2013.

\bibitem{Berkov}
D.~\textsc{Berkov}.
\newblock Magnetization dynamics including thermal fluctuations.
\newblock In H.~\textsc{Kronm{\"u}ller} and S.~\textsc{Parkin}, editors, {\em
  Handbook of Magnetism and Advanced Magnetic Materials}, volume \textbf{2},
  pages 795--823. Wiley Online Library, 2007.

\bibitem{BROWN1978}
W.~F. \textsc{Brown}.
\newblock {\em Micromagnetics}.
\newblock Interscience, New York, 1963.

\bibitem{BROWN1963}
W.~F. \textsc{Brown}.
\newblock Thermal fluctuations of a single-domain particle.
\newblock {\em Physical Review}, \textbf{130}(5):1677, 1963.

\bibitem{BRZEZNIAKGOLDYS2013}
Z.~\textsc{Brze{\'z}niak}, B.~\textsc{Goldys}, and T.~\textsc{Jegaraj}.
\newblock Weak solutions of a stochastic {L}andau-{L}ifshitz-{G}ilbert
  equation.
\newblock {\em Appl. Math. Res. Express. AMRX}, (1):1--33, 2013.

\bibitem{carbou1997comportement}
G.~\textsc{Carbou} and P.~\textsc{Fabrie}.
\newblock Comportement asymptotique des solutions faibles des {\'e}quations de
  {Landau}-{Lifschitz}.
\newblock {\em Comptes Rendus de l'Acad{\'e}mie des Sciences-Series
  I-Mathematics}, \textbf{325}(7):717--720, 1997.

\bibitem{carbou2001regular}
G.~\textsc{Carbou} and P.~\textsc{Fabrie}.
\newblock Regular solutions for {Landau}-{Lifschitz} equation in a bounded
  domain.
\newblock {\em Differential and integral equations}, \textbf{14}(2):213--229,
  2001.

\bibitem{carbou2001regularsolutions}
G.~\textsc{Carbou} and P.~\textsc{Fabrie}.
\newblock Regular solutions for {Landau}-{Lifschitz} equation in
  $\mathbb{R}^3$.
\newblock {\em Communications in Applied Analysis}, \textbf{5}(1):17--30, 2001.

\bibitem{cimrak2005error}
I.~\textsc{Cimr{\'a}k}.
\newblock Error estimates for a semi-implicit numerical scheme solving the
  {Landau}--{Lifshitz} equation with an exchange field.
\newblock {\em IMA journal of numerical analysis}, \textbf{25}(3):611--634,
  2005.

\bibitem{cimrak2007survey}
I.~\textsc{Cimr{\'a}k}.
\newblock A survey on the numerics and computations for the {Landau}-{Lifshitz}
  equation of micromagnetism.
\newblock {\em Archives of Computational Methods in Engineering},
  \textbf{15}(3):1--37, 2007.

\bibitem{cimrak2004iterative}
I.~\textsc{Cimr{\'a}k} and M.~\textsc{Slodi{\v{c}}ka}.
\newblock An iterative approximation scheme for the
  {Landau}--{Lifshitz}--{Gilbert} equation.
\newblock {\em Journal of computational and applied mathematics},
  \textbf{169}(1):17--32, 2004.

\bibitem{DAPRATOZABCZYCK2008}
G.~\textsc{Da Prato} and J.~\textsc{Zabczyk}.
\newblock {\em Stochastic equations in infinite dimensions}.
\newblock Cambridge University Press, 2008.

\bibitem{daquino2006midpoint}
M.~\textsc{d'Aquino}, C.~\textsc{Serpico}, G.~\textsc{Coppola},
  I.~\textsc{Mayergoyz}, and G.~\textsc{Bertotti}.
\newblock Midpoint numerical technique for stochastic
  {Landau}-{Lifshitz}-{Gilbert} dynamics.
\newblock {\em Journal of applied physics}, \textbf{99}(8):08B905--08B905,
  2006.

\bibitem{FLANDOLI_GATAREK1995}
F.~\textsc{Flandoli} and D.~\textsc{Gatarek}.
\newblock Martingale and stationary solutions for stochastic {Navier}-{Stokes}
  equations.
\newblock {\em Probability Theory and Related Fields},
  \textbf{102}(3):367--391, 1995.

\bibitem{kritsikis2013beyond}
E.~\textsc{Kritsikis}, A.~\textsc{Vaysset}, L.~\textsc{Buda-Prejbeanu},
  F.~\textsc{Alouges}, and J.-C. \textsc{Toussaint}.
\newblock Beyond first-order finite element schemes in micromagnetics.
\newblock {\em J. Comput. Phys.}, 256:357--366, 2014.

\bibitem{LANDAULIFSHITZ1935}
L.~D. \textsc{{Landau}} and E.~\textsc{{Lifshitz}}.
\newblock On the theory of the dispersion of magnetic permeability in
  ferromagnetic bodies.
\newblock {\em Phys. Z. Sowjetunion}, \textbf{8}(153):101--114, 1935.

\bibitem{lelievre2008analyse}
T.~\textsc{Leli{\`e}vre}, C.~\textsc{Le Bris}, and E.~\textsc{Vanden-Eijnden}.
\newblock Analyse de certains sch{\'e}mas de discr{\'e}tisation pour des
  {\'e}quations diff{\'e}rentielles stochastiques contraintes.
\newblock {\em Comptes Rendus Mathematique}, \textbf{346}(7):471--476, 2008.

\bibitem{REZNIKOFF2004}
M.~G. \textsc{Reznikoff}.
\newblock {\em Rare events in finite and infinite dimensions}.
\newblock PhD thesis, New York University, 2004.

\bibitem{serpico2001numerical}
C.~\textsc{Serpico}, I.~\textsc{Mayergoyz}, and G.~\textsc{Bertotti}.
\newblock Numerical technique for integration of the {Landau}--{Lifshitz}
  equation.
\newblock {\em Journal of Applied Physics}, \textbf{89}(11):6991--6993, 2001.

\bibitem{serpico2003analytical}
C.~\textsc{Serpico}, I.~\textsc{Mayergoyz}, and G.~\textsc{Bertotti}.
\newblock Analytical solutions of {Landau}--{Lifshitz} equation for
  precessional switching.
\newblock {\em Journal of applied physics}, \textbf{93}(10):6909--6911, 2003.

\bibitem{van1981ito}
N.~\textsc{Van Kampen}.
\newblock It{\^o} versus {Stratonovich}.
\newblock {\em Journal of Statistical Physics}, \textbf{24}(1):175--187, 1981.

\bibitem{VISINTIN1985}
A.~\textsc{Visintin}.
\newblock On {Landau}-{Lifshitz} equations for ferromagnetism.
\newblock {\em Japan journal of applied mathematics}, \textbf{2}(1):69--84,
  1985.

\end{thebibliography}

\end{document}